 \newtheorem{theorem}{Theorem}[section]
 \newtheorem{corollary}[theorem]{Corollary}
 \newtheorem{lemma}[theorem]{Lemma}
 \newtheorem{proposition}[theorem]{Proposition}
 \theoremstyle{definition}
 \newtheorem{example}[theorem]{Example}
 \newtheorem{problem}[theorem]{Problem}
 \newtheorem{definition}[theorem]{Definition}
 \newtheorem{remark}[theorem]{Remark}
\definecolor{mygreen}{rgb}{0.1,0.8,0.1}
\definecolor{mygray}{rgb}{0.7,0.7,0.7}
\newcommand{\sink}{\node (0) [draw, rectangle, fill=black] at (0,0) {};}
\tikzset{MyNode/.style={draw, circle, inner sep=2pt}}
\newcommand{\tdot}[3]{\draw [fill=black,color=#3] (#1,#2) circle [radius=0.25];}
\newcommand{\beq}{\begin{equation}}
\newcommand{\eeq}{\end{equation}}
\newcommand{\beqn}{\begin{eqnarray}}
\newcommand{\eeqn}{\end{eqnarray}}
\newcommand{\luk}{Łukasiewicz}
\newcommand{\Config}[1]{\mathop{\mathrm{Config}}\left(#1\right)}
\newcommand{\Stable}[1]{\mathop{\mathrm{Stable}}\left(#1\right)}
\newcommand{\Rec}[1]{\mathop{\mathrm{Rec}}\left(#1\right)}
\newcommand{\SRec}[1]{\mathop{\mathrm{SR}}\left(#1\right)}
\newcommand{\dgr}[2]{\mathop{\mathrm{deg}^{#1}}\left(#2\right)}
\newcommand{\Stab}{\mathrm{Stab}}
\newcommand{\ccdot}{\! \cdot \!}
\newcommand{\mult}[2]{\mathop{\mathrm{mult}}\left(#1 #2\right)}
\newcommand{\Top}[1]{\mathrm{Topp}_{#1}}
\newcommand{\Z}{\mathbb{Z}}
\newcommand{\N}{\mathbb{N}}
\newcommand{\V}{\tilde{V}}
\newcommand{\1}[1]{\mathds{1}_{#1}}
\newcommand{\func}[1]{\mathrm{\mathbf{#1}}}
\newcommand{\dfunc}{\mathrm{\mathbf{deg}}}
\newcommand{\Del}[1]{\func{Del}_{#1}}
\newcommand{\inc}[1]{\mathop{\mathrm{inc}}\left(#1\right)}
\newcommand{\decSR}[1]{\mathop{\mathrm{SR}^{\mathrm{dec}}}\left(#1\right)}
\newcommand{\decRec}[1]{\mathop{\mathrm{Rec}^{\mathrm{dec}}}\left(#1\right)}
\newcommand{\PF}[1]{\mathrm{PF}_{#1}}
\newcommand{\PPF}[1]{\mathrm{PPF}_{#1}}
\newcommand{\GPF}[1]{\mathop{\mathrm{PF}}\left(#1\right)}
\newcommand{\PGPF}[1]{\mathop{\mathrm{PPF}}\left(#1\right)}
\newcommand{\incPF}[1]{\PF{#1}^{\mathrm{inc}}}
\newcommand{\incGPF}[1]{\mathop{\mathrm{PF}^{\mathrm{inc}}}\left(#1\right)}
\newcommand{\incPPF}[1]{\PPF{#1}^{\mathrm{inc}}}
\newcommand{\incPGPF}[1]{\mathop{\mathrm{PPF}^{\mathrm{inc}}}\left(#1\right)}
\newcommand{\OO}{\mathcal{O}}
\newcommand{\In}[1]{\mathop{\mathrm{in}^{\OO}}\left(#1\right)}
\newcommand{\Out}[1]{\mathop{\mathrm{out}^{\OO}}\left(#1\right)}
\newcommand{\LL}{\mathcal{L}}
\begin{document}

\title{Prime graphical parking functions and strongly recurrent configurations of the Abelian sandpile model}
\author{Thomas Selig, Haoyue Zhu}
\address{Department of Computing, School of Advanced Technology, Xi'an Jiaotong-Livepool University}
\email[corresponding author]{Thomas.Selig@xjtlu.edu.cn}
\email{Haoyue.Zhu18@student.xjtlu.edu.cn}
\keywords{Prime parking functions; Abelian sandpile model; Recurrent configurations; Bijection}

\begin{abstract}
This work investigates the duality between two discrete dynamical processes: parking functions, and the Abelian sandpile model (ASM). Specifically, we are interested in the extension of classical parking functions, called $G$-parking functions, introduced by Postnikov and Shapiro in 2004. $G$-parking functions are in bijection with recurrent configurations of the ASM on $G$. In this work, we define a notion of \emph{prime} $G$-parking functions. These are parking functions that are in a sense ``indecomposable''. Our notion extends the concept of primeness for classical parking functions, as well as the notion of prime $(p,q)$-parking functions introduced by Armon \emph{et al.} in recent work. We show that from the ASM perspective, prime $G$-parking functions correspond to certain configurations of the ASM, which we call \emph{strongly recurrent}. We study this new connection on a number of graph families, including wheel graphs, complete graphs, complete multi-partite graphs, and complete split graphs.
\end{abstract}

\maketitle

%%%%%%%%%%%%%%%%%%%%%%%% SECTION 1 %%%%%%%%%%%%%%%%%%%%%

\section{Introduction and definitions}\label{sec:intro}

\subsection{Introduction}\label{subsec:intro}

This paper investigates the connection between parking functions and sandpile models, two dynamical processes in Discrete Mathematics. We begin with an informal introduction to these two processes, with more formal definitions being given in Sections~\ref{subsec:PF_intro} and \ref{subsec:ASM_intro} (these also contain detailed surveys of the literature).

Parking functions were originally introduced by Konheim and Weiss~\cite{KonWeiss} in their study of a hashing technique known as \emph{linear probing}. Informally, consider the following problems. We have $n$ cars which enter a street sequentially, and each car has a preferred parking spot in the street. What happens if a car's preferred spot is already occupied when it enters, i.e.\ if a previous car has parked there? In hashing terminology, this is known as a \emph{collision}. Konheim and Weiss's classical parking model dictates that in such a case, the car simply drives forward to the next available parking spot. A (classical) \emph{parking function} is an assignment of preferences which allows all cars to park through this process.

Since their introduction, parking functions have become a rich source of research topics in Mathematics and Theoretical Computer Science, with connections to a variety of different fields. In particular, there have been a number of parking functions considered and studied in the literature. Some of these are based on a modification of the collision rule. Others modify or generalise various characterisations of classical parking functions. A more detailed review of the existing literature around parking functions and their variations is given in Section~\ref{subsec:PF_intro}. In this paper, we are interested in so-called \emph{graphical parking functions}, or $G$-parking functions. These extend the parking process to a given graph $G$. $G$-parking functions were introduced by Postnikov and Shapiro~\cite{PostPF}, who showed a rich connection to another discrete dynamical process called the \emph{Abelian sandpile model} (ASM) on $G$. 

In the ASM, each vertex of $G$ is assigned a number of grains of sand. At each step of the process, a vertex $v$ of $G$ is selected at random, and an extra grain added to it. If this causes the number of grains at $v$ to exceed a certain threshold (usually the degree of $v$), we say that $v$ is \emph{unstable}. Unstable vertices \emph{topple}, sending grains to their neighbours in the graph. This may cause some neighbours to become unstable, and we topple these in turn. A special vertex, the \emph{sink}, allows excess grains to exit the system, and so this process eventually stabilises. Of central importance in ASM research are the so-called \emph{recurrent} configurations of the model, which appear infinitely often in its long-time running. Section~\ref{subsec:ASM_intro} gives more details on results relating to these recurrent configurations.

Notably, Postnikov and Shapiro~\cite{PostPF} exhibited a straightforward bijection between recurrent configurations for the ASM on $G$ and $G$-parking functions (see Theorem~\ref{thm:bij_rec_GPF}). It is this duality between the ASM and parking functions that we explore further in this paper. More precisely, we will be interested in a notion of \emph{primeness} for parking functions. In this context, prime parking functions are parking functions which cannot be ``decomposed'' into smaller parking functions. We extend the notion of primeness to $G$-parking functions, and connect prime $G$-parking functions to a subset of recurrent configurations for the ASM, which we refer to as \emph{strongly recurrent}.

Our paper is organised as follows. In the remainder of this section, we formally introduce parking functions and their variations, and the Abelian sandpile model. We survey some of the relevant literature relating to these two processes. In Section~\ref{sec:SR_prime}, we introduce the concepts of \emph{prime $G$-parking functions} and \emph{strongly recurrent configurations}, and show that these are in bijection with each other (Theorem~\ref{thm:bij_sr_GPPF}). In Section~\ref{sec:SR_graphs}, we study these concepts in more depth on specific families of graphs, obtaining various enumerative and bijective results. In particular, we will see that our notion of primeness extends the classical prime parking functions (see~\cite[Exercise~5.49]{StanEC}) and a more recent concept of prime $(p,q)$-parking functions introduced by Armon \emph{et al.}~\cite{ABHKMMY}. Finally, Section~\ref{sec:conc} summarises our main result and discusses possible future research directions.

Let us now fix some basic notation. We denote by $\Z$ the set of integers, and $\N$ the set of (strictly) positive integers. Throughout this paper, $n \in \N$ denotes a positive integer, and we let $[n] := \{1, \dots, n\}$. We are interested in two processes related to graphs. Here, we consider graphs $G = (V, E)$ which are undirected, finite, connected, with possible multiple edges (but no loops), and rooted at a  special vertex $s \in V$, called the \emph{sink}. We sometimes write $G = (\Gamma, s)$ for clarity ($\Gamma$ is the un-rooted graph), and will simply call $G$ a \emph{graph}. The set of non-sink vertices will be denoted $\tilde{V} := V \setminus \{s\}$. 

For two vertices $v, w \in V$, we denote $ \mult{v}{w}$ the number of edges between $v$ and $w$ (this can be $0$). For $v \in V$ and $A \subseteq V$, we denote $N^A(v)$ the multi-set of neighbours of $v$ in $A$, and $\dgr{A}{v} := \vert N^A(v) \vert = \sum\limits_{w \in A} \mult{v}{w}$. 
For simplicity, we usually just write $\dgr{}{v} = \dgr{V}{v}$ for the degree of the vertex $v$. Finally, we denote $\dfunc : V \rightarrow \N, v \mapsto \dgr{}{v}$ the degree function of $G$. Finally, for $A \subseteq V$, we write $G[A]$ for the \emph{induced} subgraph of $G$ on $A$: this is the graph with vertex set $A$, and edge set the set of edges with both end-points in $A$.

\subsection{Parking functions}\label{subsec:PF_intro}

We begin by formally introducing the notion of parking functions. Consider a one-way street with $n$ parking spots labelled $1$ to $n$, and $n$ cars, also labelled $1$ to $n$, which want to park in the street. For each $i \in [n]$, car $i$ has a preferred parking spot $p_i \in [n]$, and we say that $p = (p_1, \dots p_n)$ is a {parking preference}. Cars enter the street sequentially in order $1, \dots, n$, and first drive to find their preferred spots. If a car $i$'s preferred spot $p_i$ is unoccupied when it enters the street, then car $i$ parks there and is said to \emph{occupy} spot $i$. Otherwise, if a car $i$'s preferred spot $p_i$ has been occupied by a previous car $j < i$, then we say that a \emph{collision} occurs.

In classical parking functions, collisions are resolved as follows. If a car $i$'s preferred spot $p_i$ is already occupied when it enters the street, it drives on, and parks in the first unoccupied spot $k > p_i$ (because the street is one-way cars can only drive forwards). If no such spot exists, car $i$ directly exits the street, having failed to park. We say that $p = (p_1, \dots, p_n)$ is a (classical) \emph{parking function} if all $n$ cars are able to park through this process, and denote $\PF{n}$ the set of parking functions with $n$ cars and $n$ spots.

Figure~\ref{fig:ex_PF_valid} shows an example of this (classical) parking process for $p = (3, 1, 3, 1)$. Here, all cars are able to park, so $p \in \PF{4}$. However, if the last car's preferred spot were $3$ (or $4$), it could not find a parking place, so $(3, 1, 3, 3) \notin \PF{4}$.

\begin{figure}[ht]
 \centering
 
  \begin{tikzpicture}[scale=0.18]
    
    \node at (-3,1.2) {cars};
    \node at (-3,-4.5) {spots};       
    
    %%Car 1
    \foreach \x in {1,...,4}
	  \node at (-1+0.5+3*\x,-4.5) {$\x$};
    %Parking spots
    \draw [thick, color=blue] (1,-2)--(1,-3)--(4,-3)--(4,-2);
    \draw [thick, color=blue] (4,-2)--(4,-3)--(7,-3)--(7,-2);
    \draw [thick, color=blue] (7,-2)--(7,-3)--(10,-3)--(10,-2);
    \draw [thick, color=blue] (10,-2)--(10,-3)--(13,-3)--(13,-2);
    \node [MyNode, color=red, scale=0.8] (1) at (8.5,1.2) {$1$};
    \node at (8.5, -1.8) {$\downarrow$};
    \node at (15.5, -0.3) {$\Longrightarrow$};

    %%Car 2
    \begin{scope}[shift={(17.5,0)}]
    \foreach \x in {1,...,4}
	  \node at (-1+0.5+3*\x,-4.5) {$\x$};
    %Parking spots
    \draw [thick, color=blue] (1,-2)--(1,-3)--(4,-3)--(4,-2);
    \draw [thick, color=blue] (4,-2)--(4,-3)--(7,-3)--(7,-2);
    \draw [thick, color=blue] (7,-2)--(7,-3)--(10,-3)--(10,-2);
    \draw [thick, color=blue] (10,-2)--(10,-3)--(13,-3)--(13,-2);
    \node [MyNode, scale=0.8pt] (1) at (8.5,-1) {$1$};
    \node [MyNode, color=red, scale=0.8pt] (2) at (2.5,1.2) {$2$};
    \node at (2.5, -1.8) {$\downarrow$};
    \node at (15.5, -0.3) {$\Longrightarrow$};
    \end{scope}

    %%Car 3
    \begin{scope}[shift={(35,0)}]
    \foreach \x in {1,...,4}
	  \node at (-1+0.5+3*\x,-4.5) {$\x$};
    %Parking spots
    \draw [thick, color=blue] (1,-2)--(1,-3)--(4,-3)--(4,-2);
    \draw [thick, color=blue] (4,-2)--(4,-3)--(7,-3)--(7,-2);
    \draw [thick, color=blue] (7,-2)--(7,-3)--(10,-3)--(10,-2);
    \draw [thick, color=blue] (10,-2)--(10,-3)--(13,-3)--(13,-2);
    \node [MyNode, scale=0.8pt] (1) at (8.5,-1) {$1$};
    \node [MyNode,scale=0.8pt] (2) at (2.5,-1) {$2$};
    \node [MyNode, color=red, scale=0.8pt] (3) at (8.5,2.2) {$3$};
    \node at (11.5, 2.2) {$\curvearrowright$};
    \node at (15.5, -0.3) {$\Longrightarrow$};
    \end{scope}

    %%Car 4
    \begin{scope}[shift={(52.5,0)}]
    \foreach \x in {1,...,4}
	  \node at (-1+0.5+3*\x,-4.5) {$\x$};
    %Parking spots
    \draw [thick, color=blue] (1,-2)--(1,-3)--(4,-3)--(4,-2);
    \draw [thick, color=blue] (4,-2)--(4,-3)--(7,-3)--(7,-2);
    \draw [thick, color=blue] (7,-2)--(7,-3)--(10,-3)--(10,-2);
    \draw [thick, color=blue] (10,-2)--(10,-3)--(13,-3)--(13,-2);
    \node [MyNode, scale=0.8pt] (1) at (8.5,-1) {$1$};
    \node [MyNode,scale=0.8pt] (2) at (2.5,-1) {$2$};
    \node [MyNode,scale=0.8pt] (2) at (11.5,-1) {$3$};
    \node [MyNode, color=red, scale=0.8pt] (3) at (2.5,2.2) {$4$};
    \node at (5.5, 2.2) {$\curvearrowright$};
    \node at (15.5, -0.3) {$\Longrightarrow$};
    \end{scope}

    %%Final config
    \begin{scope}[shift={(70,0)}]
    \foreach \x in {1,...,4}
	  \node at (-1+0.5+3*\x,-4.5) {$\x$};
    %Parking spots
    \draw [thick, color=blue] (1,-2)--(1,-3)--(4,-3)--(4,-2);
    \draw [thick, color=blue] (4,-2)--(4,-3)--(7,-3)--(7,-2);
    \draw [thick, color=blue] (7,-2)--(7,-3)--(10,-3)--(10,-2);
    \draw [thick, color=blue] (10,-2)--(10,-3)--(13,-3)--(13,-2);
    \node [MyNode, scale=0.8pt] (1) at (8.5,-1) {$1$};
    \node [MyNode,scale=0.8pt] (2) at (2.5,-1) {$2$};
    \node [MyNode,scale=0.8pt] (2) at (11.5,-1) {$3$};
    \node [MyNode, scale=0.8pt] (4) at (5.5,-1) {$4$};
    \end{scope}
    
  \end{tikzpicture}
    \caption{Illustrating the parking process for the parking function $p = (3, 1, 3, 1)$.} \label{fig:ex_PF_valid}
\end{figure}
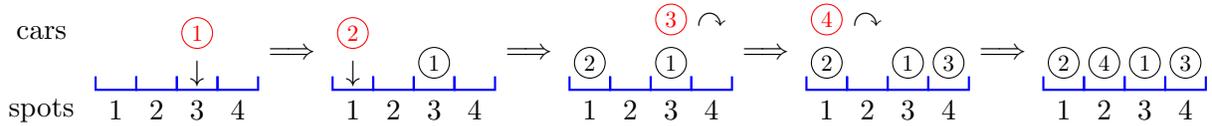

Parking functions were originally introduced by Konheim and Weiss~\cite{KonWeiss} to study hashing functions.
Since their introduction, they have been a rich research topic in Mathematics and Computer Science, with connections to various fields such as graph theory, hyperplane arrangements, discrete geometry, and the Abelian sandpile model~\cite{ChevPyl, CorPou, CR, StanHyp, PitStan}. The interested readers are referred to the excellent survey by Yan~\cite{Yan}. For our purposes, we recall the following well-known characterisations of parking functions.

\begin{theorem}\label{thm:PF_charac}
Let $p = (p_1, \dots, p_n) \in [n]^n$ be a parking preference. Then the following are equivalent.
\begin{enumerate}
\item We have $p \in \PF{n}$.
\item For any $i \in [n]$, we have $\inc{p}_i \leq i$, where $\inc{p} := \left( \inc{p}_1, \dots, \inc{p}_n \right)$ denotes the non-decreasing re-arrangement of $p$.
\item For any $i \in [n]$, we have $\left\vert \{ j \in [n]; \, p_j \leq i \} \right\vert \geq i$.
\item For any subset $S \subseteq [n]$, there exists $i \in S$ such that $p_i \leq n+1-\vert S \vert$.
\end{enumerate}
\end{theorem}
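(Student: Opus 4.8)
The plan is to prove the four characterisations equivalent by establishing a cycle of implications, most naturally $(1) \Leftrightarrow (2)$, $(2) \Leftrightarrow (3)$, and $(3) \Leftrightarrow (4)$, since each adjacent pair is closely related combinatorially. First I would handle $(2) \Leftrightarrow (3)$, which is purely about counting and should be the easiest: the condition $\inc{p}_i \leq i$ for all $i$ says that the $i$-th smallest preference is at most $i$, which is exactly dual to saying that for each $i$ at least $i$ of the preferences are $\leq i$. Concretely, $\left\vert \{j : p_j \leq i\} \right\vert \geq i$ fails for some $i$ precisely when fewer than $i$ values are $\leq i$, i.e.\ when $\inc{p}_i > i$; spelling out this contrapositive in both directions gives the equivalence.

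Next I would do $(3) \Leftrightarrow (4)$, which is a complementation argument. Given $(3)$, for a subset $S \subseteq [n]$ of size $s = \vert S \vert$, I want some $i \in S$ with $p_i \leq n+1-s$. Setting $k = n-s$, condition $(3)$ at index $k$ guarantees at least $k = n-s$ indices $j$ with $p_j \leq n-s$; but then at most $s - 1 < s$ indices have $p_j \geq n-s+1 = n+1-s$, so the set $S$ of size $s$ cannot consist entirely of such indices, forcing some $i \in S$ with $p_i \leq n+1-s$. For the converse, I would argue contrapositively: if $(3)$ fails at some index $i$, meaning at most $i-1$ preferences are $\leq i$, then the set $S$ of the (at least) $n-i+1$ indices $j$ with $p_j \geq i+1$ has size $\vert S\vert \geq n-i+1$, and every such index violates $p_j \leq n+1-\vert S\vert \leq i$, contradicting $(4)$.

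The genuinely substantive part is $(1) \Leftrightarrow (2)$, connecting the \emph{dynamic} parking process to the \emph{static} counting condition, and this is where I expect the main obstacle to lie. The cleanest approach is via the sorted-preference criterion: I would show that the parking outcome is invariant under reordering the cars' preferences, so that $p$ parks successfully if and only if its non-decreasing rearrangement $\inc{p}$ does. Granting this, the forward direction $(1) \Rightarrow (2)$ follows because if $\inc{p}_i > i$ for some $i$, then the $i$ cars with the $i$ smallest preferences all prefer spots in $\{\inc{p}_i, \dots, n\}$, a set of size $n - \inc{p}_i + 1 < n - i + 1$; but spots $1, \dots, \inc{p}_i - 1$ can never be filled by any car once we observe that each of these $i$ cars can only occupy a spot $\geq \inc{p}_i$, yielding too few spots for too many cars and so at least one car fails. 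Conversely, for $(2) \Rightarrow (1)$ I would process the cars in sorted order and argue inductively that when $\inc{p}_i \leq i$ holds for all $i$, the first $i$ cars always occupy spots within $\{1, \dots, i\}$ with no overflow past spot $n$, so every car parks.

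The delicate point to get right is the reordering invariance of the parking process, since the collision rule depends on arrival order; I would justify it by the standard observation that the \emph{set} of occupied spots after all cars have entered depends only on the multiset of preferences, not the order, which can be seen by a swap argument exchanging two adjacent cars and checking the final occupied set is unchanged. Alternatively, one can sidestep dynamics entirely and prove $(1) \Leftrightarrow (3)$ directly via a counting/Hall's-theorem argument: a successful parking is exactly a system of distinct representatives matching each car $i$ to a distinct spot $\geq p_i$, and condition $(3)$ (equivalently $(4)$) is precisely the Hall condition for such a matching to exist. Given that characterisation $(4)$ is literally a Hall-type deficiency condition, invoking Hall's marriage theorem may in fact be the most economical route and I would mention it as the conceptual backbone tying all four conditions together.
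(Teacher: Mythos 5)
Your overall strategy is sound and in fact more self-contained than the paper's: the paper only proves $(3)\Leftrightarrow(4)$ explicitly (by essentially the same complementation argument you give) and cites Yan's survey for $(1)\Leftrightarrow(2)\Leftrightarrow(3)$, whereas you supply proofs of all the links, including the order-invariance of the parking process, which is the right ``delicate point'' to isolate and is correctly handled by your adjacent-swap argument. However, two of your steps contain slips that, as written, break the argument. First, in $(3)\Rightarrow(4)$ you apply condition $(3)$ at index $k=n-s$; this only yields that at most $s$ (not $s-1$) indices satisfy $p_j\geq n-s+1$, which gives $\vert S\vert = s \leq s$ and no contradiction. You should instead apply $(3)$ at index $n+1-s$ (as the paper does): at least $n+1-s$ indices satisfy $p_j\leq n+1-s$, so at most $s-1$ satisfy $p_j> n+1-s$, and $S$ cannot consist entirely of such indices. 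Second, in $(1)\Rightarrow(2)$ you argue about ``the $i$ cars with the $i$ smallest preferences'' all preferring spots in $\{\inc{p}_i,\dots,n\}$; this is false (those preferences are all $\leq \inc{p}_i$, and $\inc{p}_1$ may well equal $1$). The correct pigeonhole is on the $n-i+1$ cars occupying sorted positions $i,\dots,n$: they all have preference $\geq \inc{p}_i> i$, hence can only occupy spots in $\{\inc{p}_i,\dots,n\}$, a set of size $n-\inc{p}_i+1< n-i+1$, so one of them fails. With these two corrections the proof is complete; your closing remark that $(4)$ is a Hall-type condition is a nice conceptual observation, though note that the Hall route still requires the equivalence between existence of a system of distinct representatives and success of the dynamic process, i.e.\ it does not let you bypass the $(1)\Leftrightarrow(2)$ step.
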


\begin{proof}
Conditions~(2) and (3) can be found for example in~\cite[Section~1.1]{Yan}. Condition~(4) is perhaps less commonly seen in the literature, but it is convenient to state it here to draw a direct comparison with the Abelian sandpile model, and particularly Dhar's burning algorithm (see Section~\ref{subsec:ASM_intro}) and $G$-parking functions (Definition~\ref{def:GPF}). Its equivalence to Condition~(3) is straightforward. Indeed, if $S$ is such that $p_i > n+1-\vert S \vert$ for all $i \in S$, then 
$ \left\vert \{ j \in [n]; \, p_j \leq n+1-\vert S \vert \} \right\vert \leq \left\vert [n] \setminus S \right\vert < n + 1 - \vert S \vert$, breaking Condition~(3). Conversely, if $i$ is such that $\left\vert \{ j \in [n]; \, p_j \leq i \} \right\vert < i$, then by construction $S := \{ j \in [n]; \, p_j > i \}$ breaks Condition~(4), since we have $\vert S \vert > n - i$, i.e.\ $i \geq n+1-\vert S \vert$, so that for all $j \in S$, we have $p_j > i \geq n+1-\vert S \vert $.
\end{proof} 

In recent years, a fruitful research direction has focused on defining and studying variations on the classical parking functions introduced above. These variations fall into a few different categories.

\begin{itemize}
\item There are parking models which consider the classical process but with $m$ cars and $n$ spots. These are called $\mathbf{(m,n)}$\textbf{-parking functions} when $m \leq n$ (see~\cite{KY_PF} for a lovely bijection to spanning forests), or \textbf{defective parking functions} when $m < n$, since in this case at least $n-m$ cars will always be unable to park (see~\cite{CamDefPF} for elegant enumerative results).
\item There are parking models with cars of different sizes, called \textbf{parking assortments} or \textbf{parking sequences} depending on the precise collision rule~\cite{EhrHapp, HarrisSeq1, HarrisSeq2}.
\item There are models which modify the collision rule, such as \textbf{Naples parking functions}~\cite{HarrisNaples1, HarrisNaples2} or \textbf{pullback parking functions}~\cite{HarrisPullback} which allow some backwards movement, \textbf{MVP parking functions}~\cite{HarrisMVP, SeligMVP} where priority is given to later cars in the sequence, \textbf{friendship parking functions}~\cite{SeligFriend} which impose additional conditions on where cars are allowed to park, and so on.
\item There are models which allow cars to have a preferred subset of spots to park in, rather than a single preference. These are called \textbf{subset parking functions}~\cite{Spiro}. See also~\cite{CDMY} for an in-depth treatment of \textbf{interval parking functions}, a special case where each preference subset is a sub-interval of $[n]$, and~\cite{AEGHH} for generalisations of these.
\item Finally, there are models that are defined by modifying some of the characterisations of parking functions from Theorem~\ref{thm:PF_charac}. For example, \textbf{vector parking functions} (or $\mathbf{u}$-parking functions)~\cite{Yan2023, Yin2023} modify Condition~(2) by replacing ``$i$'' in the right-hand side of the inequality with a vector coordinate $u_i$. \textbf{Rational parking functions}~\cite{AvBer2016, GMV2016, Nels2025} correspond to the special case where $\mathbf{u}$ is affine. The multidimensional $\mathbf{U}$-parking functions~\cite{YanUpq, Yan2023}, where $\mathbf{U}$ is a collection of multidimensional vectors, further generalise these to higher dimensions.
\end{itemize}

Another variant in a similar spirit to vector parking functions are the \emph{graphical parking functions}, or $G$-parking functions, introduced by Postnikov and Shapiro~\cite{PostPF}. These generalise Condition~(4) of Theorem~\ref{thm:PF_charac} as follows.

\begin{definition}\label{def:GPF}
Let $G = (\Gamma, s)$ be a graph, with vertex set (resp.\ non-sink vertex set) $V$ (resp.\ $\V$). For $S \subseteq V$ (here, $V = \tilde{V} \cup \{ s \}$ includes the sink), we denote $S^c := V \setminus S$ its complement in $V$. We say that a funciton $p : \V \rightarrow \N$ is a \emph{$G$-parking function} if, for every non-empty subset $S \subseteq \V$, there exists $v \in S$ such that $p(v) \leq \dgr{S^c}{v}$. We denote by $\GPF{G}$ the set of  $G$-parking functions.
\end{definition}

\begin{remark}\label{rem:GPF_0_1}
Usually, $G$-parking functions are defined as functions which take non-negative values (rather than strictly positive), with a strict inequality $p(v) < \dgr{S^c}{v}$ used. The definitions are obviously equivalent. Our preference for the above one is driven by the classical definition for parking functions taking values in $[n] = \{1, \dots, n\}$ instead of $\{0, \dots, n-1\}$ (see Proposition~\ref{pro:GPF_PF_comp}), and to avoid carrying a ``-1'' decrement in the bijection from $G$-parking functions to recurrent configurations of the ASM (see Theorem~\ref{thm:bij_rec_GPF}).
\end{remark}

\begin{remark}\label{rem:GPF_not_empty}
The set $\GPF{G}$ is always non-empty, since the constant function $\mathbf{1}: \V \rightarrow \N, v \mapsto 1$ is always a $G$-parking functions. Indeed, first note that for any $S \subseteq \V$, the complement $S^c$ is non-empty (it contains the sink $s$). Now, since $G$ is connected, for any subset $S \subseteq \V$, there must exist an edge from $S$ to $S^c$, say with end-points $v \in S$ and $w \in S^c$. In particular, we have $\dgr{S^c}{v} \geq 1$, and so $\mathbf{1}$ is a $G$-parking function, as desired.
\end{remark}

\begin{remark}\label{rem:GPF_upper_bound}
Another straightforward observation is that, if $p$ is a $G$-parking function, then for all $v \in \V$, we must have $p(v) \leq \dgr{}{v}$. We obtain this by simply taking $S = \{v\}$ in this case.
\end{remark}

We end this introductory section by explaining how this notion generalises that of classical parking functions.

\begin{proposition}\label{pro:GPF_PF_comp}
For $n \in \N$, let $K_n^0$ denote the complete graph with vertex set $[n] \cup \{0\}$, rooted at $0$. Then we have $\PF{n} = \GPF{K_n^0}$.
\end{proposition}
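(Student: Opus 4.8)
The plan is to show the two sets coincide by comparing $G$-parking functions for $G = K_n^0$ with the combinatorial characterization of classical parking functions given in Condition~(4) of Theorem~\ref{thm:PF_charac}. The key observation is that in the complete graph $K_n^0$, any two distinct vertices are joined by exactly one edge, so for any vertex $v$ and any vertex set $A$ not containing $v$ we have $\dgr{A}{v} = \vert A \vert$. In particular, for $S \subseteq \tilde{V} = [n]$ and $v \in S$, the complement $S^c = V \setminus S$ has cardinality $(n+1) - \vert S \vert$, giving $\dgr{S^c}{v} = n + 1 - \vert S \vert$, a quantity independent of $v$.

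With this computation in hand, I would unwind both definitions and observe that they match verbatim. A function $p : [n] \to \N$, viewed as a preference vector $(p_1, \dots, p_n)$, is a $K_n^0$-parking function precisely when for every non-empty $S \subseteq [n]$ there exists $v \in S$ with $p(v) \leq \dgr{S^c}{v} = n + 1 - \vert S \vert$. This is exactly Condition~(4) of Theorem~\ref{thm:PF_charac}, which characterizes membership in $\PF{n}$. Hence the two defining conditions are literally the same statement, and the equality $\PF{n} = \GPF{K_n^0}$ follows.

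There is one small bookkeeping point to address before concluding, namely the matter of domains. A classical parking function in $\PF{n}$ is a vector in $[n]^n$, so each $p_i \in \{1, \dots, n\}$, whereas Definition~\ref{def:GPF} takes $p$ to be an arbitrary function $\tilde{V} \to \N$ with no a priori upper bound. I would resolve this by invoking Remark~\ref{rem:GPF_upper_bound}: any $G$-parking function satisfies $p(v) \leq \dgr{}{v}$, and here $\dgr{}{v} = n$ for every $v$, so automatically $p(v) \in [n]$. Thus the $K_n^0$-parking functions already live in $[n]^n$, matching the domain of classical parking functions, and there is no discrepancy in the underlying sets over which the two conditions range.

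I do not anticipate a serious obstacle here; the proof is essentially a direct translation once the degree computation $\dgr{S^c}{v} = n+1-\vert S \vert$ is made explicit. The only point requiring a line of care is confirming that the quantifier structure (``there exists $v \in S$'') and the inequality in Definition~\ref{def:GPF} align exactly with those in Condition~(4), including the treatment of the sink in forming $S^c$; making that identification precise, together with the domain remark above, completes the argument.
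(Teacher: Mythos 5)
Your proof is correct and follows essentially the same route as the paper: compute $\dgr{S^c}{v} = n+1-\vert S\vert$ in $K_n^0$ and identify Definition~\ref{def:GPF} with Condition~(4) of Theorem~\ref{thm:PF_charac}. The extra remark about domains (using Remark~\ref{rem:GPF_upper_bound} to confirm $p(v) \in [n]$) is a sensible bit of care that the paper leaves implicit, but it does not change the argument.
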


\begin{proof}
In the graph $K_n^0$, for any subset $S \subseteq [n]$ and vertex $v \in S$, we have $\dgr{S^c}{v} = \left\vert S^c \right\vert = \left\vert [n] \cup \{0\} \right\vert - \vert S \vert = n + 1 - \vert S \vert$, and the result follows immediately from Definition~\ref{def:GPF} and Theorem~\ref{thm:PF_charac}, Condition~(4).
\end{proof}

\subsection{The Abelian sandpile model}\label{subsec:ASM_intro}

In this section, we formally define the Abelian sandpile model, which is the second process we are interested in in this paper. The sandpile model was initially introduced by Bak, Tang and Wiesenfeld~\cite{BTW1,BTW2} as the model to study a phenomenon called \emph{self-organised criticality}. Shortly after, Dhar~\cite{Dhar1} formalised and generalised it, naming it the \emph{Abelian sandpile model} (ASM).

Let $G = (\Gamma, s)$ be a graph with vertex set (resp.\ non-sink vertex set) $V$ (resp.\ $\V$). A (sandpile) \emph{configuration} is a function $c : \V \rightarrow \Z$, where we think of $c(v)$ as representing the number of ``grains of sand'' at the vertex $v$. We define $\Config{G}$ the set of all sandpile configurations on $G$.  For $v \in \V$ and $c \in \Config{G}$, a vertex $v$ is said to be \emph{stable} in the configuration $c$ if $c(v) < \dgr{}{v}$. The configuration $c$ is \emph{stable} if all its vertices are stable, and we denote $\Stable{G}$ the set of stable configurations on $G$. Finally, for $c \in \Config{G}$, we define $\vert c \vert := \sum\limits_{v \in \V} c(v)$ to be the total number of grains in the configuration $c$.

Unstable vertices \emph{topple} by sending one grain along each of their incident edges. Formally, if $v$ is an unstable vertex in a configuration $c$, we define the \emph{toppling operator at $v$}, denoted $\Top{v}$, by:
\begin{equation}\label{eq:def_topple}
\Top{v}(c) := c - \dgr{}{v} \ccdot \1{v} + \sum\limits_{w \in \V} \mult{v}{w} \ccdot \1{w},
\end{equation}
where $\1{w}$ denotes the indicator function at $w$ (i.e.\ $\1{w}(u) = 1$ if $u = w$ and $0$ otherwise). In words, $\Top{v}(c)$ is obtained from the configuration $c$ by removing $\dgr{}{v}$ grains at $v$, and re-distributing them to the neighbours of $v$ according to the multiplicities of the edges $vw$.

Toppling an unstable vertex $v$ may then cause some of its neighbours to become unstable, and we topple these in turn. Now note that by applying Equation~\eqref{eq:def_topple}, we have $\vert \Top{v}(c) \vert = \vert c \vert - \mult{v}{s}$. As such, the total number of grains in a configuration strictly decreases when we topple a neighbour of the sink, and otherwise remains the same. We think of the sink as ``absorbing'' grains when we topple one of its neighbours, with these grains exiting the system.

From this, it is relatively straightforward to see that, starting from an unstable configuration $c$ and toppling unstable vertices successively, we will eventually reach a stable configuration $c'$. Furthermore, Dhar~\cite{Dhar1} showed that the stable configuration $c'$ reached through this process does not depend on the order in which vertices are toppled.
We write $c' = \Stab(c)$ and call it the \emph{stabilisation} of $c$. Figure~\ref{fig:stab} shows the stabilisation process for the configuration $c = \left( c(v_1), c(v_2), c(v_3), c(v_4) \right) = (4, 1, 1, 0)$. In this figure, \textcolor{red}{red} denotes unstable vertices, \textcolor{blue}{blue} edges represent grains being sent along those edges through a toppling.

\begin{figure}[ht]
  \centering
  \begin{tikzpicture}[scale=0.55]
  
 \begin{scope}
  \node [MyNode, color=red] (1) at (-2,2) {$4$};
  \node [MyNode] (2) at (-0.5,4) {$1$};  
  \node [MyNode] (3) at (1,2) {$1$};
  \node [MyNode] (4) at (2.5,1.5) {$0$};
  \sink
  \foreach \xx in {2,3,4}
    \node [above, yshift=6pt] at (\xx) {$v_{\xx}$};
  \node [above, color=red, yshift=6pt] at (1) {$v_1$};
  \node [below, yshift=-4pt] at (0) {$s$};
  \draw [thick, blue] (0)--(1);
  \draw [thick, out=180, in=-90, blue] (0) to (1); 
  \draw [thick, blue] (1)--(2);
  \draw [thick, blue] (1)--(3);
  \draw [thick] (0)--(3);
  \draw [thick] (2)--(3);
  \draw [thick] (0)--(4);
  \draw [->] (3.8,2)--(4.8,2);
  \node at (4.3,1.5) {\textcolor{red}{$v_1$}};  
 \end{scope}  
  
 \begin{scope}[shift={(8,0)}]
  \node [MyNode] (1) at (-2,2) {$0$};
  \node [MyNode, color=red, fill=white] (2) at (-0.5,4) {$2$};  
  \node [MyNode] (3) at (1,2) {$2$};
  \node [MyNode] (4) at (2.5,1.5) {$0$};
  \sink
  \foreach \xx in {1,3,4}
    \node [above, yshift=6pt] at (\xx) {$v_{\xx}$};
  \node [above, color=red, yshift=6pt] at (2) {$v_2$};
  \node [below, yshift=-4pt] at (0) {$s$};
  \draw [thick] (0)--(1);
  \draw [thick, out=180, in=-90] (0) to (1); 
  \draw [thick, blue] (1)--(2);
  \draw [thick] (1)--(3);
  \draw [thick] (0)--(3);
  \draw [thick, blue] (2)--(3);
  \draw [thick] (0)--(4);
  \draw [->] (3.8,2)--(4.8,2);
  \node at (4.3,1.5) {\textcolor{red}{$v_2$}};  
 \end{scope}
  
 \begin{scope}[shift={(16,0)}]
  \node [MyNode] (1) at (-2,2) {$1$};
  \node [MyNode] (2) at (-0.5,4) {$0$};  
  \node [MyNode, color=red, fill=white] (3) at (1,2) {$3$};
  \node [MyNode] (4) at (2.5,1.5) {$0$};
  \sink
  \foreach \xx in {1,2,4}
    \node [above, yshift=6pt] at (\xx) {$v_{\xx}$};
  \node [above, color=red, yshift=6pt] at (3) {$v_3$};
  \node [below, yshift=-4pt] at (0) {$s$};
  \draw [thick] (0)--(1);
  \draw [thick, out=180, in=-90] (0) to (1); 
  \draw [thick] (1)--(2);
  \draw [thick, blue] (1)--(3);
  \draw [thick, blue] (0)--(3);
  \draw [thick, blue] (2)--(3);
  \draw [thick] (0)--(4);
  \draw [->] (3.8,2)--(4.8,2);
  \node at (4.3,1.5) {\textcolor{red}{$v_3$}}; 
 \end{scope}
 
  \begin{scope}[shift={(24,0)}]
  \node [MyNode] (1) at (-2,2) {$2$};
  \node [MyNode] (2) at (-0.5,4) {$1$};  
  \node [MyNode] (3) at (1,2) {$0$};
  \node [MyNode] (4) at (2.5,1.5) {$0$};
  \sink
  \foreach \xx in {1,...,4}
    \node [above, yshift=6pt] at (\xx) {$v_{\xx}$};
  \node [below, yshift=-4pt] at (0) {$s$};
  \draw [thick] (0)--(1);
  \draw [thick, out=180, in=-90] (0) to (1); 
  \draw [thick] (1)--(2);
  \draw [thick] (1)--(3);
  \draw [thick] (0)--(3);
  \draw [thick] (2)--(3);
  \draw [thick] (0)--(4);
 \end{scope}
  
  \end{tikzpicture}
  \caption{Illustrating the stabilisation for $c = \left( c(v_1),c(v_2),c(v_3),c(v_4)\right) = (4,1,1,0) \in \Config{G}$. Vertices being toppled are represented with arrows in each phase. \label{fig:stab}}
\end{figure}
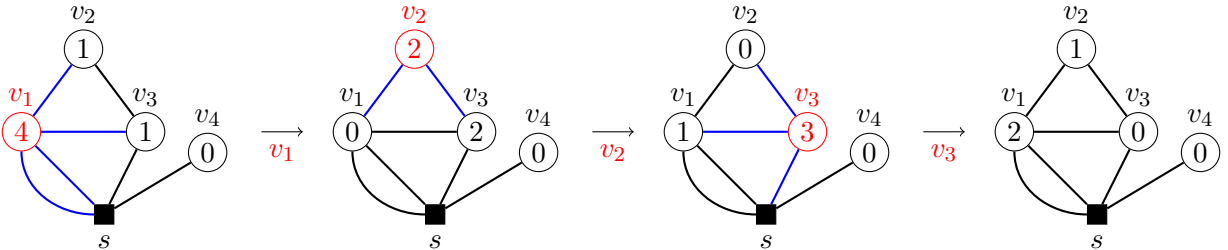

We then define a Markov chain on the set $\Stable{G}$ as follows. Let $\mu : \V \rightarrow [0,1]$ be a probability distribution on the set of non-sink vertices $\V$ such that $\mu(v)>0$ for all $v \in \V$. 
At each step of the Markov chain we add a grain at the vertex $v$ with probability $\mu(v)$ and stabilise the resulting configuration.
Formally the transition matrix $Q$ is given by:
\begin{equation}\label{eq:transition_matrix}
\forall\, c,c' \in \Stable{G}\!, \,  Q(c,c')=\sum\limits_{v \in \V} \mu(v) \ccdot \1{\Stab \left( c + \1{v} \right) \, = \, c'}.
\end{equation}

A configuration is called \emph{recurrent} if it appears infinitely often in the long-time running of this Markov chain. We denote by $\Rec{G}$ the set of recurrent configurations on $G$. These recurrent configurations are of central importance in ASM research. A seminal result shows that the number of recurrent configurations for the ASM on a graph $G$ is equal to the number of spanning trees of $G$. This can be proved algebraically using the Kirkhoff's matrix-tree theorem (see e.g.~\cite{Redig}), or bijectively (see e.g.~\cite{Ber} or \cite{CLB}). Dhar~\cite{Dhar1} showed that the set of recurrent configurations forms an Abelian group when equipped with the addition operation ``vertex-pointwise addition and stablisation''. He named this the sandpile group of the graph. 
This group, also known as the critical group, graph Jacobian, or Picard group, has been widely studied in algebraic graph and matroid theory. Its most celebrated contribution is perhaps its role in developing discrete Riemann-Roch and Abel-Jacobi theory on graphs~\cite{Baker}. 
The structure of the sandpile group has also been explicitly computed on a variety of graph families, including complete graphs~\cite{CR}, nearly-complete graphs (complete graphs with a cycle removed)~\cite{Zhou}, wheel graphs~\cite{Biggs1}, Cayley graphs of the dihedral group $D_n$~\cite{DFF}, and many more (see e.g.~\cite{Chen} and the references therein for a more complete list).

In enumerative and bijective combinatorics, a fruitful direction of recent ASM research has also focused on graph families, but instead of looking at the sandpile group, the focus has been on calculating the set of recurrent configurations via bijections to other more easily enumerated or generated combinatorial objects. The graph families that have been studied include complete graphs~\cite{CR} (see also~\cite{SelSSM}), complete bipartite or multi-partite graphs with a dominating sink~\cite{CorPou}, complete bipartite graphs where the sink is in one of the two components~\cite{DLB, SeligCompBipart} (see also~\cite{AD, AADB, ALB}), wheel and fan graphs~\cite{SelWheel}, complete split graphs~\cite{DDLB, Duk}, Ferrers graphs~\cite{DSSS1, SSS}, permutation graphs~\cite{DSSS2},  and so on. We will make use of some of these studies in Section~\ref{sec:SR_graphs} in this paper to characterise prime parking functions (or equivalently strongly recurrent configurations of the ASM) for some of these graph families.

We now give three useful characterisations of recurrent configurations of the ASM. The first (Condition~(2)) is in terms of so-called forbidden subconfigurations. For a configuration $c$ on a graph $G$, we say that $F \subset \V$ is a \emph{forbidden subconfiguration} for $c$ if $F \neq \emptyset$ and, for all $v \in F$, we have $c(v) < \dgr{F}{v}$. In words, this means that the configuration $c$ restricted to the induced subgraph $G[F]$ is stable. The second and third characterisations (Conditions~(3) and (4)) are popular equivalent framings of Dhar's \emph{burning criterion}. The three characterisations are part of the ``folklore'' of ASM research, see e.g.~\cite[Section~6]{Dhar} or \cite[Section~3]{Redig}.

\begin{theorem}\label{thm:charac_rec}
Let $G = (\Gamma, s)$ be a graph with vertex set (resp.\ non-sink vertex set) $V$ (resp.\ $\V$), and $c \in \Stable{G}$ a \emph{stable} configuration on $G$. The following are equivalent.
\begin{enumerate}
\item We have $c \in \Rec{G}$.
\item There exists no forbidden subconfiguration for $c$.
\item We have $\Stab\Big(c + \sum\limits_{w \in \V} \mult{w}{s} \ccdot \1{w} \Big) = c$.
\item There exists a permutation $v_0 = s, v_1, \dots, v_{\vert \V \vert}$ of all vertices in $V$ such that, for all $i \geq 1$, we have $c(v_i) \geq \dgr{V \setminus \{v_0, \dots, v_{i-1} \}}{v_i}$.
\end{enumerate}
\end{theorem}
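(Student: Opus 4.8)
The plan is to prove the four conditions equivalent by treating (2), (3), (4) as essentially combinatorial statements that are mutually equivalent, and then connecting them to the genuinely dynamical condition (1). Concretely, I would first establish the cheap equivalences $(2) \Leftrightarrow (4) \Leftrightarrow (3)$, and only then close the loop with $(3) \Leftrightarrow (1)$, where the real work lies. Throughout I would rely on Dhar's confluence result (that $\Stab$ is well defined, independent of the toppling order), which is quoted in the text.

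For $(2) \Leftrightarrow (4)$ I would argue by a greedy/minimal-set construction. For $(4) \Rightarrow (2)$: given a burning permutation $v_0 = s, v_1, \dots, v_{\vert \V \vert}$ and a putative forbidden subconfiguration $F$, let $v_i$ be the \emph{first} element of $F$ in this order; then $F \subseteq V \setminus \{v_0, \dots, v_{i-1}\}$, so $\dgr{F}{v_i} \le \dgr{V \setminus \{v_0, \dots, v_{i-1}\}}{v_i} \le c(v_i)$, contradicting $c(v_i) < \dgr{F}{v_i}$. For $(2) \Rightarrow (4)$: I would build the permutation greedily, noting that once $v_0, \dots, v_{i-1}$ have been chosen, the remaining set $S = V \setminus \{v_0, \dots, v_{i-1}\} \subseteq \V$ cannot be a forbidden subconfiguration, so some $v \in S$ satisfies $c(v) \ge \dgr{S}{v}$, which I select as $v_i$.

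For $(3) \Leftrightarrow (4)$ I would use an explicit toppling order. Firing the sink adds exactly $\sum_{w \in \V} \mult{w}{s} \ccdot \1{w}$ grains, and the key bookkeeping is that if every non-sink vertex topples exactly once then the configuration returns to $c$ (each $v$ nets $\mult{v}{s}$ from the sink plus $\sum_{w \in \V} \mult{v}{w}$ from its neighbours minus $\dgr{}{v}$ from itself, which is $0$). For $(4) \Rightarrow (3)$ I would feed the burning permutation in as a legal toppling sequence: when $v_i$ is about to topple it holds $c(v_i) + \dgr{\{v_0, \dots, v_{i-1}\}}{v_i}$ grains, which condition (4) makes $\ge \dgr{}{v_i}$, so $v_i$ is unstable and may fire; after all of them fire once we are back at the stable configuration $c$, which by confluence is the stabilisation. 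For $(3) \Rightarrow (4)$ I would show the odometer vector $u$ (number of topplings per vertex) is forced to be all-ones: writing the net-change equation in terms of the reduced Laplacian $\tilde L$ yields $\tilde L(u - \mathbf 1) = 0$, and invertibility of $\tilde L$ on a connected graph forces $u = \mathbf 1$; recording the order in which these (unique) topplings occur then produces a burning permutation.

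Finally, the crux is $(3) \Leftrightarrow (1)$, linking the burning operator $E(c) := \Stab\big(c + \sum_{w \in \V} \mult{w}{s} \ccdot \1{w}\big)$ to genuine recurrence in the Markov chain of Equation~\eqref{eq:transition_matrix}. Here I would invoke the sandpile-group picture: recurrent configurations are the canonical representatives of the classes of $\Z^{\V}/\tilde L \Z^{\V}$, and equivalently form the unique closed communicating class of the chain. Since firing the sink adds $\tilde L \mathbf 1$, the identity of the group, $E$ fixes every recurrent configuration, giving $(1) \Rightarrow (3)$. For $(3) \Rightarrow (1)$ I would argue that a stable fixed point of $E$ must already be recurrent, since iterating $E$ amounts to repeatedly firing the sink and stabilising, a process that eventually enters the unique closed class, so a fixed point cannot be transient. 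I expect this last implication to be the main obstacle: it is the only step that genuinely reasons about the long-run dynamics rather than performing a static toppling count, and care is needed to rule out a transient stable configuration that is merely sandpile-equivalent to, but not equal to, a recurrent one.
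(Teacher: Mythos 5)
First, a point of reference: the paper does not actually prove this theorem --- it is stated as ASM folklore with citations to Dhar and Redig --- so your attempt has to be measured against the standard arguments in those sources rather than against anything in the text. Your handling of the purely combinatorial cycle $(2)\Leftrightarrow(4)\Leftrightarrow(3)$ is correct and is essentially the standard one: the greedy construction for $(2)\Rightarrow(4)$, the ``first element of $F$ in the burning order'' argument for $(4)\Rightarrow(2)$, the legal-toppling bookkeeping for $(4)\Rightarrow(3)$, and the odometer argument $\tilde{L}(u-\mathbf{1})=0$ with $\tilde{L}$ invertible for $(3)\Rightarrow(4)$ are all sound, resting only on the abelian property and on $\det\tilde{L}\neq 0$ for connected $G$. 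The implication $(1)\Rightarrow(3)$ is also acceptable, provided one grants as a black box that each class of $\Z^{\V}/\tilde{L}\Z^{\V}$ contains at most one recurrent configuration; that is a heavy import which in many treatments is itself derived \emph{from} the burning criterion, so you would need to be careful to cite a proof of it that does not route through the very theorem at hand (such proofs exist, e.g.\ via the least action principle).

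The genuine gap is $(3)\Rightarrow(1)$, exactly where you predicted trouble. Your argument is that iterating $E(c)=\Stab\big(c+\sum_{w\in\V}\mult{w}{s}\ccdot\1{w}\big)$ ``eventually enters the unique closed class, so a fixed point cannot be transient''. But for a fixed point $c$ of $E$ the orbit of this iteration is $\{c\}$ itself, so the assertion that the iteration reaches the recurrent class is precisely the statement to be proved, and iterating yields no information; in a finite Markov chain a transient state may well admit a positive-probability loop back to itself without being recurrent. What is actually needed is that $c$ is reachable from a recurrent configuration --- for instance from $c_{\max}:=\dfunc-\mathbf{1}$, which is reachable from every stable configuration and hence recurrent --- and this cannot be extracted from repeated additions of $\sum_{w}\mult{w}{s}\ccdot\1{w}$ alone, since that vector vanishes off the neighbourhood of the sink. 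The standard repair is the counting argument: $\Rec{G}$ is contained in the set of FSC-free stable configurations, and both sets have cardinality $\det\tilde{L}$ (the former via the free transitive action of the sandpile group, the latter via the burning bijection with spanning trees), so they coincide. Some substantive extra ingredient of this kind is required; your sketch identifies the obstacle honestly but does not supply the missing step.
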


If $c$ is recurrent, a sequence such as in Condition~(4) is called 	a \emph{burning sequence} for $c$. 
It is sometimes convenient to think of the configuration $c + \sum\limits_{w \in \V} \mult{w}{s} \ccdot \1{w}$ in Condition~(3) as the configuration $c$ in which we have ``toppled the sink''. In fact, if $c$ is recurrent, the stabilisation of this configuration topples every vertex exactly once, following the order of any burning sequence. With this interpretation, configuration $c$ is recurrent when, starting from $c$ and toppling the sink, every vertex topples exactly once in the resulting stabilisation process, finally yielding the original configuration $c$.

\begin{remark}\label{rem:non-negative}
It is straightforward to see that recurrent configurations are \emph{non-negative}, i.e.\ $c(v) \geq 0$ for all $v \in \V$ (otherwise if $c(v) < 0$, then $\{v\}$ is a forbidden subconfiguration for $c$). It is most common to define all configurations as being non-negative: this gives a more realistic meaning to the physical interpretation of $c(v)$ representing the number of grains of sand at $v$. However, in this paper we will be interested in removing grains of sand from configurations (see Equation~\eqref{eq:def_sr}), which may cause the configuration to take negative values at some vertices, hence our approach.
\end{remark}

We end this section by recalling the following result, which establishes the key duality between $G$-parking functions and recurrent configurations for the ASM on $G$.

\begin{theorem}[{\cite[Lemma~13.6]{PostPF}}]\label{thm:bij_rec_GPF}
Let $G = (\Gamma, s)$ be a graph. Then the map $ c \mapsto \mathrm{\mathbf{deg}} - c $ is a bijection from $\Rec{G}$ to $\GPF{G}$.
\end{theorem}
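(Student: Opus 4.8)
The plan is to show that the map $\Phi: c \mapsto \dfunc - c$ sends recurrent configurations bijectively onto $G$-parking functions. Since $\Phi$ is clearly an involution on the set of all integer-valued functions on $\V$ (applying it twice returns $c$, as $\dfunc - (\dfunc - c) = c$), it suffices to prove that $c \in \Rec{G}$ if and only if $\dfunc - c \in \GPF{G}$; bijectivity then follows for free, with $\Phi$ being its own inverse. I would therefore reduce the whole statement to a single equivalence of characterizing conditions.

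First I would translate both sides into the ``forbidden subconfiguration'' language. Recall from Theorem~\ref{thm:charac_rec} that a stable configuration $c$ is recurrent if and only if it admits no forbidden subconfiguration, i.e.\ there is no non-empty $F \subseteq \V$ with $c(v) < \dgr{F}{v}$ for all $v \in F$. Equivalently, $c$ is recurrent iff for every non-empty $F \subseteq \V$ there exists $v \in F$ with $c(v) \geq \dgr{F}{v}$. On the parking side, Definition~\ref{def:GPF} says $p := \dfunc - c$ lies in $\GPF{G}$ iff for every non-empty $S \subseteq \V$ there exists $v \in S$ with $p(v) \leq \dgr{S^c}{v}$, where $S^c = V \setminus S$. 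The heart of the proof is the key degree identity: for any $v \in S \subseteq \V$, since $V = S \sqcup S^c$ and $\V = V \setminus \{s\}$, we have
\begin{equation}\label{eq:deg_split}
\dgr{}{v} = \dgr{S}{v} + \dgr{S^c}{v}.
\end{equation}
Using this, the parking condition $p(v) \leq \dgr{S^c}{v}$ becomes $\dgr{}{v} - c(v) \leq \dgr{}{v} - \dgr{S}{v}$, which simplifies to $c(v) \geq \dgr{S}{v}$. So with the identification $F = S$, the two existential conditions are literally the same statement, and the equivalence follows term-by-term over all non-empty subsets.

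The one point requiring care, and the main (minor) obstacle, is the \emph{stability} hypothesis: Theorem~\ref{thm:charac_rec} characterizes recurrence only among \emph{stable} configurations, so I must verify that $\Phi$ and its inverse respect the relevant domains. I would argue that if $p \in \GPF{G}$ then $c = \dfunc - p$ is automatically stable: by Remark~\ref{rem:GPF_upper_bound} a $G$-parking function satisfies $p(v) \leq \dgr{}{v}$ (take $S = \{v\}$), but I actually need the strict bound $c(v) < \dgr{}{v}$, i.e.\ $p(v) \geq 1$, which holds since $G$-parking functions are $\N$-valued by Definition~\ref{def:GPF}. Conversely, a recurrent $c$ is stable by definition and non-negative by Remark~\ref{rem:non-negative}, so $p = \dfunc - c$ satisfies $1 \leq p(v) \leq \dgr{}{v}$ and is in particular $\N$-valued, as required for it to be a candidate $G$-parking function. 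Having checked that both conditions (stability, correct value range) are preserved in each direction, the term-by-term equivalence established via \eqref{eq:deg_split} completes the proof. I expect the argument to be short; the only thing to watch is keeping the strict-versus-weak inequalities straight and confirming the boundary value $p(v) \geq 1$ aligns the stability threshold correctly.
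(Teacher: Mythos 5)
Your argument is correct and complete: the term-by-term identification of the parking condition with the absence of forbidden subconfigurations via $\dgr{}{v} = \dgr{S}{v} + \dgr{S^c}{v}$, together with the checks that recurrence forces $1 \leq p(v) \leq \dgr{}{v}$ and that $p(v) \geq 1$ forces stability of $\dfunc - p$, is exactly what is needed. Note that the paper itself gives no proof of this statement (it is cited from Postnikov and Shapiro), but your route is the standard one and is precisely the mechanism the authors exploit later, e.g.\ in the proof of Theorem~\ref{thm:bij_sr_GPPF}, where forbidden subconfigurations are matched against the parking inequality in the same way.
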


%%%%%%%%%%%%%%%%%%%%%%%% SECTION 2 %%%%%%%%%%%%%%%%%%%%%

\section{Prime $G$-parking functions and strong recurrence}\label{sec:SR_prime}

In this section, we introduce the concept of \emph{primeness} for $G$-parking functions. We connect this to the notion of \emph{strong recurrence} for the ASM.

\subsection{Prime classical parking functions}\label{subsec:PPF}

We begin by recalling the notion of primeness for classical parking functions. We say that a (classical) parking function $p = (p_1, \dots, p_n) \in \PF{n}$ has a \emph{breakpoint} at an index $j \in [n]$ if we have $\vert \{ i \in [n]; \, p_i \leq j \} \vert = j$. In words, this means that there are exactly $j$ cars which prefer one of the first $j$ spots in the street, and therefore exactly $n-j$ cars which prefer one of the last $n-j$ spots. For example, the parking function $p = (3, 1, 3, 1)$ from Figure~\ref{fig:ex_PF_valid} has a breakpoint at $j = 2$. 

If $j < n$, we can think of $p$ as \emph{decomposable} in the following sense. Write $s_1 < \dots < s_j$ for elements of the set $\{i \in [n]; \, p_i \leq j\}$, and $s_{j+1} < \dots < s_n$ for elements of its complement. Then by construction we have $p^{\leq j} := \left( p_{s_1}, \dots, p_{s_j} \right) \in \PF{j}$ and $p ^ {>j} := \left(p_{s_{j+1}} - j, \dots, p_{s_n} - j \right) \in \PF{n-j}$, thus decomposing $p$ into two smaller parking functions.

A \emph{prime} parking function is a parking function whose only breakpoint is at index $n$, and we denote by $\PPF{n}$ the set of prime parking functions of size $n$. Prime parking functions were introduced by Gessel (see e.g.~\cite[Exercise~5.49]{StanEC}), who showed that $\left\vert \PPF{n} \right\vert = (n-1)^{n-1}$ (see also~\cite{DG2024} for a bijective proof).

\begin{remark}\label{rem:lattice_paths}
Parking functions can be mapped to two classical families of lattice paths: Dyck paths (see e.g.~\cite[Section~1.6]{Yan}) and \luk\ paths (see~\cite{SeligLuk}). We briefly recall these two mappings. Given a parking function $p = (p_1, \dots, p_n) \in \PF{n}$, for each $j \in [n]$, we define $q_j := \left\vert \{ i \in [n]; p_i = j \} \right\vert$. We then map $p$ to the following lattice paths:
\begin{itemize}
\item The Dyck path $\mathcal{D} := U^{q_1} D U^{q_2} \dots D U^{q_n} D$, where $U = (1/2, 1/2)$ and $D = (1/2, -1/2)$;
\item The \luk\ path $\mathcal{L} := S_1 \dots S_n$, where $S_k := (1, q_k - 1)$ for $k \in [n]$.
\end{itemize}
In this setting, breakpoints of $p$ are simply the positive $x$-coordinates where the paths hit the $x$-axis, and prime parking functions are those whose corresponding paths only hit the $x$-axis at the path's end points.
In fact, if we restrict ourselves to non-decreasing parking functions (where $p_1 \leq \dots \leq p_n$), these mappings are bijective. Figure~\ref{fig:dyck_luk} shows the Dyck path (left) and \luk\ path (right) corresponding to the parking function $p = (1, 1, 1, 3, 4, 4, 7, 7, 7)$, with breakpoints (in red) at $\color{red}{j = 6}$ and $\color{red}{j = 9}$.

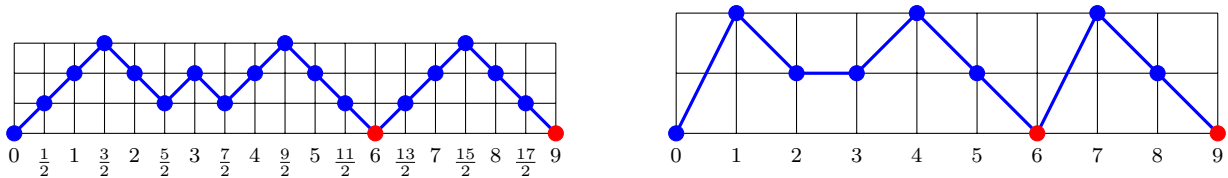
\begin{figure}[ht]
\begin{tikzpicture}[scale=0.4]
 %Dyck path
   %grid
   \draw [step=1] (0,0) grid (18,3);
   \foreach \xx in {0,...,9}
     \node [below, yshift=-2pt] at (2*\xx,0) {{\scriptsize $\xx$}};
   \foreach \yy in {1,3,...,17}
     \node [below, yshift=-2pt] at (\yy,0) {{\scriptsize $\frac{\yy}{2}$}};
   % path
   \draw [very thick, color=blue]  (0,0)--(3,3)--(5,1)--(6,2)--(7,1)--(9,3)--(12,0)--(15,3)--(18,0);
   %dots
   \tdot{0}{0}{blue}
   \tdot{1}{1}{blue}
   \tdot{2}{2}{blue}
   \tdot{3}{3}{blue}
   \tdot{4}{2}{blue}
   \tdot{5}{1}{blue}
   \tdot{6}{2}{blue}
   \tdot{7}{1}{blue}
   \tdot{8}{2}{blue}
   \tdot{9}{3}{blue}
   \tdot{10}{2}{blue}
   \tdot{11}{1}{blue}
   \tdot{12}{0}{red}
   \tdot{13}{1}{blue}
   \tdot{14}{2}{blue}
   \tdot{15}{3}{blue}
   \tdot{16}{2}{blue}
   \tdot{17}{1}{blue}
   \tdot{18}{0}{red}
 % Luk path
 \begin{scope}[shift={(22,0)}]
   % grid
   \draw [step=2] (0,0) grid (18,4);
   \foreach \xx in {0,...,9}
     \node [below, yshift=-2pt] at (2*\xx,0) {{\scriptsize $\xx$}};
    % path
    \draw [very thick, color=blue]  (0,0)--(2,4)--(4,2)--(6,2)--(8,4)--(10,2)--(12,0)--(14,4)--(16,2)--(18,0);
   % dots
	\tdot{0}{0}{blue}
    \tdot{2}{4}{blue}
    \tdot{4}{2}{blue}
    \tdot{6}{2}{blue}
    \tdot{8}{4}{blue}
    \tdot{10}{2}{blue}
    \tdot{12}{0}{red} 
    \tdot{14}{4}{blue} 
    \tdot{16}{2}{blue}    
    \tdot{18}{0}{red}
  \end{scope}
\end{tikzpicture}
\caption{The Dyck path (left) and \luk\ path (right) corresponding to the parking function $p = (1, 1, 1, 3, 4, 4, 7, 7, 7)$. The breakpoints of $p$ are the positive $x$-coordinates at which the paths hit the $x$-axis. \label{fig:dyck_luk}}
\end{figure}
\end{remark}

\subsection{Prime $G$-parking functions}\label{subsec:PGPF}

The concept of primeness for parking functions was recently extended in~\cite{ABHKMMY}, with definitions for vector parking functions, $(p,q)$-parking functions, and two-dimensional vector parking functions. We propose here to extend the concept to $G$-parking functions. If $G = (\Gamma, s)$ is a graph with vertex set $V = \V \cup \{s\}$, and $A \subseteq \V$, we define $G^A := G[A \cup \{s\}]$ to be the induced subgraph on $A \cup \{s\}$.

\begin{definition}\label{def:prime_GPF}
Let $(A,B)$ be an ordered set partition of $\V$, with $A,B \neq \emptyset$. For $p \in \GPF{G}$, we define $p^A : A \rightarrow \N$ by $p^A(v) := p(v)$ for all $v \in A$, and $p^B : B \rightarrow \Z$ by $p^B(v) := p(v) - \dgr{A}{v}$ for all $v \in B$. We say that $p$ is \emph{decomposable} with respect to the partition $(A, B)$ if we have $p^A \in \GPF{G^A}$ and $p^B \in \GPF{G^B}$. We say that $p$ is \emph{prime} if there exists no ordered set partition $(A,B)$ with respect to which $p$ is decomposable, and denote by $\PGPF{G}$ the set of all prime $G$-parking functions.
\end{definition}

\begin{proposition}\label{pro:empty_PGPF_cut_vertex}
If $s$ is \emph{not} a cut vertex of $G$, then $\mathbf{1} \in \PGPF{G}$. Otherwise, $\PGPF{G} = \emptyset$.
\end{proposition}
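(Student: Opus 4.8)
The plan is to treat the two claims separately, with both hinging on a single structural fact: an ordered set partition $(A,B)$ of $\V$ can support a decomposition only in ways controlled by the edges crossing between $A$ and $B$. I first record that $\mathbf{1} \in \GPF{G}$ by Remark~\ref{rem:GPF_not_empty}, so the only question is whether $\mathbf{1}$ is prime.

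For the case where $s$ is not a cut vertex, I would show directly that $\mathbf{1}$ admits no decomposition. Fix any ordered set partition $(A,B)$ of $\V$ with $A, B \neq \emptyset$. Since $s$ is not a cut vertex, the induced graph $G[\V] = \Gamma \setminus \{s\}$ is connected, so some edge joins $A$ to $B$; choose $v \in B$ with $\dgr{A}{v} \geq 1$. Then $\mathbf{1}^B(v) = 1 - \dgr{A}{v} \leq 0$, so $\mathbf{1}^B$ fails to be $\N$-valued and hence $\mathbf{1}^B \notin \GPF{G^B}$. As $(A,B)$ was arbitrary, $\mathbf{1}$ is prime, giving $\mathbf{1} \in \PGPF{G}$.

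For the case where $s$ is a cut vertex, let $C_1, \dots, C_k$ (with $k \geq 2$) be the connected components of $\Gamma \setminus \{s\}$, and fix the partition $A := C_1$, $B := C_2 \cup \dots \cup C_k$, which are nonempty and have no edges between them. I would then prove that \emph{every} $p \in \GPF{G}$ is decomposable with respect to this one partition, which forces $\PGPF{G} = \emptyset$. Because no edge crosses, $\dgr{A}{v} = 0$ for all $v \in B$, so $p^B = p|_B$ is $\N$-valued, and $p^A = p|_A$ by definition. The substance is to check the two parking conditions. Given a nonempty $S \subseteq A$, the $G$-parking condition for $p$ supplies $v \in S$ with $p(v) \leq \dgr{S^c}{v}$, where $S^c = V \setminus S$; since $v \in A$ sends no edges to $B$, the count $\dgr{S^c}{v}$ reduces to the degree of $v$ into $(A \setminus S) \cup \{s\}$, which is exactly its degree into the complement of $S$ inside $G^A$ (the subgraph being induced). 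This is precisely the $G^A$-parking condition for $S$, so $p|_A \in \GPF{G^A}$; the symmetric computation over nonempty $S \subseteq B$ gives $p|_B \in \GPF{G^B}$.

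I expect the main obstacle to be this last verification, namely arguing cleanly that deleting the opposite part leaves the relevant degrees of each vertex unchanged — this is exactly where the absence of crossing edges is used — together with the preliminary check that $G^A$ and $G^B$ are themselves connected (each $C_i$ must be joined to $s$ since $G$ is connected), so that the $G$-parking formalism applies to them as intended. A conceptual point to keep in sight is the asymmetry between the two claims: for $\mathbf{1}$ the positivity requirement on $\mathbf{1}^B$ by itself forces the non-existence of crossing edges, whereas a general $p$ can absorb crossing edges through its values, so the second claim genuinely requires selecting the component partition rather than an arbitrary one.
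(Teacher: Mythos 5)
Your proposal is correct and follows essentially the same route as the paper: when $s$ is not a cut vertex you use a crossing edge to force $\mathbf{1}^B(v) \leq 0$, and when $s$ is a cut vertex you decompose an arbitrary $p \in \GPF{G}$ along the component partition using the degree identity $\dgr{V \setminus S}{v} = \dgr{A \cup \{s\} \setminus S}{v}$, which is exactly the paper's computation. The only additions are minor bookkeeping (connectedness of $G^A$, $G^B$) that the paper leaves implicit.
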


\begin{proof}
Suppose first that $s$ is a cut vertex of $G$. Thus, removing $s$ from $G$ disconnects $G$ into two parts $A$ and $B$, i.e.\ with no edges between $A$ and $B$ in $G$, see Figure~\ref{fig:s_cut} (the subgraphs $G[A]$ and $G[B]$ need not necessarily be connected). Then by definition $p^B(v) = p(v)$ for all $v \in B$, since $\dgr{A}{v} = 0$. We claim that $p^A$ and $p^B$ are both parking functions. Indeed, let $S \subseteq A$. Since $p$ is a $G$-parking function, there exists $v \in S$ such that $p(v) \leq \dgr{V \setminus S}{v}$. But by construction we have $\dgr{V \setminus S}{v} = \dgr{V}{v} - \dgr{S}{v} = \dgr{A \cup \{s\}}{v} - \dgr{S}{v} + \dgr{B}{v} = \dgr{A \cup \{s\}}{v} - \dgr{S}{v} + 0 = \dgr{A \cup \{s\} \setminus S}{v}$. This shows that $p^A$ is a $G^A$-parking function, as desired, and the argument is analogous for $p^B$.

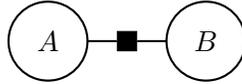
\begin{figure}[ht]
 \centering
  \begin{tikzpicture}[scale=0.35]
 \node (0) [draw, rectangle, fill=black] at (6,0) {};

 \draw [thick] (3,0) circle (1.5cm);
 \draw [thick] (9,0) circle (1.5cm);
 \draw [thick] (4.5,0)--(0)--(7.5,0);

 % comment
 \node at (3,0) {$A$};
 \node at (9,0) {$B$};
  \end{tikzpicture}
  \caption{A graph $G$ where the sink is a cut vertex.\label{fig:s_cut}}
\end{figure}

Now suppose that $s$ is not a cut vertex of $G$. Fix a partition $(A,B)$ of $\V$. Since $s$ is not a cut vertex of $G$, there must exist an edge $(vw)$ of $G$ with $v \in A$ and $w \in B$ (see Figure~\ref{fig:s_not_cut}). In particular, we have $\dgr{A}{w} > 0$, and so $\mathbf{1}^B(w) = 1 - \dgr{1}{w} \leq 0$, so that $\mathbf{1}^B$ cannot be a parking function. This shows that there is no partition $(A,B)$ with respect to which $\mathbf{1}$ is decomposable, and therefore $\mathbf{1} \in \PGPF{G}$, as desired.

\begin{figure}[ht]
 \centering
  \begin{tikzpicture}[scale=0.35]
 \node (0) [draw, rectangle, fill=black] at (6,-3) {};

 \draw [thick] (3,0) circle (1.5cm);
 \draw [thick] (9,0) circle (1.5cm);

 \draw [thick] (4.4,-0.6)--(7.6,-0.6);
 \draw [thick] (4.5,0)--(7.5,0);
 \draw [thick] (4.4,0.6)--(7.6,0.6);
 \draw [thick] (3,-1.5)--(6,-3)--(9,-1.5);

 % comment
 \node at (3,0) {$A$};
 \node at (9,0) {$B$};
  \end{tikzpicture}
  \caption{A graph $G$ where the sink is not a cut vertex.\label{fig:s_not_cut}}
\end{figure}
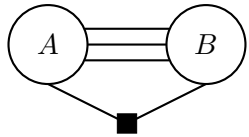
\end{proof}

In the proof above, what stopped $\mathbf{1}^B$ from being a parking function was the fact that it does not take positive values. We now show that this is the only possible issue for general $p^B$.

\begin{proposition}\label{pro:p^B_cond}
Let $p \in \GPF{G}$ be a $G$-parking function, and $(A,B)$ an ordered set partition of $\V$. Then $p$ is decomposable with respect to $(A,B)$ if and only if we have $p^A \in \GPF{G^A}$ and $p^B(v) > 0$ for all $v \in B$.
\end{proposition}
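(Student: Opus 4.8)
The statement is an equivalence in which one direction is a near-tautology and the other carries the content. The plan is to dispense with the forward direction first: if $p$ is decomposable with respect to $(A,B)$, then by Definition~\ref{def:prime_GPF} we have $p^B \in \GPF{G^B}$, and since every $G^B$-parking function is by Definition~\ref{def:GPF} a map into $\N$, this forces $p^B(v) \geq 1 > 0$ for every $v \in B$ (while $p^A \in \GPF{G^A}$ is retained verbatim). So the real work lies in the converse.

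For the converse I assume $p^A \in \GPF{G^A}$ together with $p^B(v) > 0$ for all $v \in B$, and I aim to establish $p^B \in \GPF{G^B}$; combined with the hypothesis on $p^A$, this is precisely decomposability in the sense of Definition~\ref{def:prime_GPF}. The positivity hypothesis immediately promotes $p^B$ from a $\Z$-valued to an $\N$-valued function, so all that remains is to verify the defining subset condition of Definition~\ref{def:GPF} for $p^B$ on $G^B$. Fix a non-empty $S \subseteq B$. Since the vertex set of $G^B$ is $B \cup \{s\}$ and $s \notin S$, the complement of $S$ there is $(B \setminus S) \cup \{s\}$, so I must exhibit a vertex $v \in S$ with $p^B(v) \leq \dgr{(B \setminus S) \cup \{s\}}{v}$.

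The key step is to read this off from the parking condition for $p$ itself. Because $S \subseteq B \subseteq \V$ and $p \in \GPF{G}$, Definition~\ref{def:GPF} supplies a $v \in S$ with $p(v) \leq \dgr{V \setminus S}{v}$. As $S \subseteq B$, the set $V \setminus S$ splits as the disjoint union $A \sqcup (B \setminus S) \sqcup \{s\}$, and additivity of $\dgr{}{v}$ over disjoint vertex sets gives $\dgr{V \setminus S}{v} = \dgr{A}{v} + \dgr{(B \setminus S) \cup \{s\}}{v}$. Subtracting $\dgr{A}{v}$ and recalling $p^B(v) = p(v) - \dgr{A}{v}$ yields exactly $p^B(v) \leq \dgr{(B \setminus S) \cup \{s\}}{v}$, which is the required inequality; here I also use that degrees into subsets of $B \cup \{s\}$ agree in $G$ and in the induced subgraph $G^B$.

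I do not expect a serious obstacle. The hypothesis $p^A \in \GPF{G^A}$ is in fact never invoked in deriving $p^B \in \GPF{G^B}$ — it is merely carried along to complete the definition of decomposability — and the only genuine point is the bookkeeping of the complement inside $G^B$ together with the degree-decomposition identity above. Conceptually, the $G$-parking inequality for a subset $S \subseteq B$ is literally the shifted $G^B$-parking inequality for $p^B$, so positivity of $p^B$ is the sole extra constraint that can fail, matching the intuition from Proposition~\ref{pro:empty_PGPF_cut_vertex} that positivity was the only obstruction for $\mathbf{1}^B$.
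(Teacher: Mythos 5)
Your proof is correct and follows essentially the same route as the paper's: the forward direction is the observation that parking functions are $\N$-valued, and the converse applies the $G$-parking condition of $p$ to a subset $S \subseteq B$ and uses the degree decomposition $\dgr{V \setminus S}{v} = \dgr{A}{v} + \dgr{B \cup \{s\} \setminus S}{v}$ to transfer the inequality to $p^B$. Your explicit remark that the hypothesis $p^A \in \GPF{G^A}$ is never used in establishing $p^B \in \GPF{G^B}$ is a correct (and slightly sharper) observation than the paper makes.
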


\begin{proof}
That the condition is necessary follows from the fact that parking functions must take positive values. Conversely, suppose that $(A,B)$ is an ordered set partition such that $p^A \in \GPF{G^A}$ and $p^B$ takes positive values. We wish to show that $p^B$ is a $G^B$-parking function. Let $S \subseteq B$. Since $p$ is a $G$-parking function, there exists $v \in B$ such that $p(v) \leq \dgr{V \setminus S}{v}$. By definition, we have:
\begin{align*}
p^B(v) & = p(v) - \dgr{A}{v}\\
 & \leq \dgr{V \setminus S}{v} - \dgr{A}{v}\\
 & = \dgr{V}{v} - \dgr{S}{v} - \dgr{A}{v}\\
 & = \dgr{B \cup \{s\}}{v} + \dgr{A}{v}  - \dgr{S}{v} - \dgr{A}{v}\\
 & = \dgr{B \cup \{s\} \setminus S}{v},
\end{align*}
and therefore $p \in \GPF{G^B}$ as desired.
\end{proof}

For $p \in \GPF{G}$, a \emph{prime decomposition} of $p$ is an ordered set partition $(A_1, \dots, A_k)$ of $\V$ such that for all $i \in [k]$, we have $p^{A_i} \in \PGPF{G^{A_i}}$, where $p^{A_i}$ is defined by $p^{A_i}(v) := p(v) - \dgr{\bigcup\limits_{j=1}^{i-1} A_j}{v}$ for all $v \in A_i$. By successively decomposing a $G$-parking function until no further decompositions are possible, we get the following.

\begin{proposition}\label{pro:prime_decomp}
Every $G$-parking function admits a prime decomposition.
\end{proposition}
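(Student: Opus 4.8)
The plan is to argue by strong induction on the number of non-sink vertices $n := \vert \V \vert$, which formalises the informal ``successively decompose'' idea. The point that makes the induction go through is that whenever $p$ decomposes with respect to an ordered set partition $(A,B)$, both induced graphs $G^A$ and $G^B$ have strictly fewer non-sink vertices than $G$ (as $A, B \neq \emptyset$), so the inductive hypothesis applies to $p^A$ and $p^B$. The base of the induction, and equally the trivial case at every level, is when $p \in \PGPF{G}$ is itself prime: then the length-one partition $(\V)$ is a prime decomposition, since $\bigcup\limits_{j=1}^{0} A_j = \emptyset$ gives $p^{\V} = p$, while $G^{\V} = G[\V \cup \{s\}] = G$, so $p^{\V} = p \in \PGPF{G} = \PGPF{G^{\V}}$.

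For the inductive step, suppose $p$ is not prime. Then by Definition~\ref{def:prime_GPF} there is an ordered set partition $(A,B)$ of $\V$ with $A, B \neq \emptyset$ such that $p^A \in \GPF{G^A}$ and $p^B \in \GPF{G^B}$. Applying the inductive hypothesis to the $G^A$-parking function $p^A$ and to the $G^B$-parking function $p^B$ yields prime decompositions $(A_1, \dots, A_r)$ of $A$ and $(B_1, \dots, B_t)$ of $B$. I then claim that the concatenation $(A_1, \dots, A_r, B_1, \dots, B_t)$ is a prime decomposition of $p$ itself.

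Verifying this claim is where the real work lies, and I expect it to be the main obstacle --- not because it is deep, but because it requires carefully reconciling the two notions of ``restriction'': the parts of the combined partition are defined relative to $p$, whereas the inductive hypotheses produce parts defined relative to $p^A$ and $p^B$. Two facts make them coincide. First, for vertices $v, w$ both in $A$ (resp.\ both in $B$), the multiplicity $\mult{v}{w}$ is the same in $G$ as in the induced subgraph $G^A$ (resp.\ $G^B$), so degrees into any subset of $A$ (resp.\ $B$) are unchanged when passing to the induced subgraph; second, degree is additive over disjoint vertex sets. Concretely, for a part $A_i$ and $v \in A_i$ one has $p^A(v) = p(v)$ and
\[
p(v) - \dgr{\bigcup\limits_{l=1}^{i-1} A_l}{v} = p^A(v) - \dgr{\bigcup\limits_{l=1}^{i-1} A_l}{v} = (p^A)^{A_i}(v),
\]
together with $G[A_i \cup \{s\}] = (G^A)^{A_i}$, so the $A_i$-part of the combined decomposition equals the $A_i$-part of the prime decomposition of $p^A$ and hence lies in $\PGPF{G^{A_i}}$. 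For a part $B_j$ and $v \in B_j$, using $p^B(v) = p(v) - \dgr{A}{v}$ and degree additivity,
\[
p(v) - \dgr{A \cup \bigcup\limits_{m=1}^{j-1} B_m}{v} = \big( p(v) - \dgr{A}{v} \big) - \dgr{\bigcup\limits_{m=1}^{j-1} B_m}{v} = (p^B)^{B_j}(v),
\]
and again $G[B_j \cup \{s\}] = (G^B)^{B_j}$, so this part equals the corresponding $B_j$-part of the prime decomposition of $p^B$ and lies in $\PGPF{G^{B_j}}$. Thus every part of the concatenation is prime, which establishes the claim and completes the induction.
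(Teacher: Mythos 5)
Your proof is correct and is essentially the paper's own argument made rigorous: the paper offers only the one-line justification ``by successively decomposing until no further decompositions are possible,'' and your strong induction on $\vert \V \vert$, together with the careful check that restriction to $A_i$ (resp.\ $B_j$) commutes with passing to the induced subgraphs $G^A$ and $G^B$, is exactly the formalisation that sentence implicitly relies on. No gaps.
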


\begin{remark}\label{rem:decomp}

Unlike for classical parking functions (or equivalently, Dyck or \luk\ paths), the prime decomposition of a $G$-parking function is not unique. Indeed, consider the $G$-parking function $p = \left(p(v_1), p(v_2), p(v_3), p(v_4)\right) = (1,2,1,1)$ from Figure~\ref{fig:exa_diff_prime_decomp}. Here, $p$ admits two distinct prime decompositions: the partitions $\left( \{v_1\}, \{v_2, v_3, v_4\} \right)$ and $\left( \{v_3, v_4\}, \{v_1,v_2\} \right)$ (in both cases $p^A$ and $p^B$ are the all-1 function $\mathbf{1}$). Notably, for these decompositions, the block sizes are different, which shows that the decomposition is not unique even up to graph symmetries.

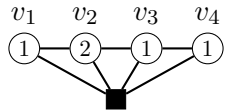
\begin{figure}[ht]
 \centering
  \begin{tikzpicture}[scale=0.271]
 \node (0) [draw, rectangle, fill=black] at (7.5,-2.5) {};
 \node [MyNode, scale=0.8] (1) at (3,0) {1};
 \node [MyNode, scale=0.8] (2) at (6,0) {2};
 \node [MyNode, scale=0.8] (3) at (9,0) {1};
 \node [MyNode, scale=0.8] (4) at (12,0) {1};

 \foreach \xx in {1,2,3,4}
   \foreach \yy in {0}
     \draw [thick] (\xx)--(\yy);
 \draw [thick] (1)--(2)--(3)--(4);

 % comment
 \node at (3,1.6) {$v_1$};
 \node at (6,1.6) {$v_2$};
 \node at (9,1.6) {$v_3$};
 \node at (12,1.6) {$v_4$};
  \end{tikzpicture}
  \caption{A graphical parking function which admits two different prime decompositions.\label{fig:exa_diff_prime_decomp}}
\end{figure}

\end{remark}

\subsection{Strong recurrence for the ASM}\label{subsec:SR}

It is well known, and straightforward to check, that recurrent configurations of the ASM are closed under grain addition. That is, if $c \in \Rec{G}$, then for any $v \in \V(G)$ such that $c(v) < \dgr{}{v} - 1$, we have $c + \1{v} \in \Rec{G}$, where $\1{v}$ is the indicator function at $v$ (i.e., $\1{v}(w) = 1$ if $w = v$ and $0$ otherwise). In this section, we are interested in the reverse property: which recurrent configurations are closed under grain removal?

Our motivation for this stems from the so-called \emph{minimal recurrent} configurations. A configuration $c \in \Config{G}$ is called minimal recurrent if it is recurrent, and for all $v \in \V(G)$, the configuration $c - \1{v}$ (obtained from the configuration $c$ by removing one grain at the vertex $v$) is \textbf{not} recurrent. Minimal recurrent configurations have been quite widely studied in the literature, and are in bijection with certain acyclic orientations of the underlying graph $G$ (see e.g.~\cite{Schulz}). They have proved to be a key tool in various combinatorial studies of the recurrent configurations on graph families, see for example~\cite{DSSS1, SSS, SelWheel}.

The notion of \emph{strong recurrence} that we introduce in this paper is in some sense orthogonal to that of minimal recurrence. Strongly recurrent configurations will be configurations that remain recurrent after we remove many grains from them. We begin by formally defining the notion.

\begin{definition}\label{def:SR}
Let $G = (\Gamma, s) $ be a graph with vertex set $V = \V  \cup \{s\}$. For a recurrent configuration $c \in \Rec{G}$, we set $V_M(c) := \{v \in N^{V}(s); \, c(v) \geq \dgr{}{v} - \mult{v}{s} \}$. For $v \in V_M(c)$, we define the configuration 
\begin{equation}\label{eq:def_sr} 
c^{v-} :=  c - \sum\limits_{w \in \V \setminus \{v\}} \mult{w}{s} \ccdot \1{w}.
\end{equation}
We say that $c$ is \emph{strongly recurrent} (SR) if for all $v \in V_M(c)$, we have $c^{v-} \in \Rec{G}$. We denote by $\SRec{G}$ the set of SR configurations on $G$.
\end{definition}

In words, the transformation $c \leadsto c^{v-}$ removes grains from all neighbours of the sink except $v$ according to their multiplicity. For many graph families, such  as complete graphs, wheel graphs, fan graphs, complete multi-partite graphs with a dominating sink, and so on, this just means removing exactly one grain from all vertices except $v$. Leaving $c$ unchanged at $v$ is dictated by Dhar's burning criterion (Theorem~\ref{thm:charac_rec}, Conditions~(3) and (4)): $V_M(c)$ is simply the set of vertices which are unstable in the configuration $c + \sum\limits_{w \in \V} \mult{w}{s} \ccdot \1{w}$ (equivalently, the set of possible values for $v_1$ in a burning sequence). In particular, if a configuration $c'$ is obtained from $c$ by removing grains, $c'$ can only be recurrent if at least one of these vertices remains unstable in $c' + \sum\limits_{w \in \V} \mult{w}{s} \ccdot \1{w}$, hence the need to leave $c$ unchanged at some $v \in V_M(c)$. We summarise this discussion through the following useful lemma.

\begin{lemma}\label{lem:V_M(c^v-)}
Let $c \in \SRec{G}$ be a strongly recurrent configuration, and $v \in V_M(c)$. Then we have $V_M\left( c^{v-} \right) = \{ v \}$.
\end{lemma}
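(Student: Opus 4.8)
The plan is to compute $V_M\!\left(c^{v-}\right)$ directly from its defining condition, handling the vertex $v$ separately from the other neighbours of the sink. Two preliminary observations set up the argument. First, because $c$ is strongly recurrent and $v \in V_M(c)$, Definition~\ref{def:SR} tells us that $c^{v-} \in \Rec{G}$, so $V_M\!\left(c^{v-}\right)$ is indeed defined. Second, every recurrent configuration is stable (they are drawn from $\Stable{G}$), so in particular $c(w) < \dgr{}{w}$ for all $w \in \V$; this stability inequality for the original configuration $c$ will be the lever that rules out every candidate other than $v$.

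To see that $v \in V_M\!\left(c^{v-}\right)$, I would note that the subtraction defining $c^{v-}$ in Equation~\eqref{eq:def_sr} omits the term at $v$, so $c^{v-}(v) = c(v)$. Since $v \in V_M(c)$, we have $c(v) \geq \dgr{}{v} - \mult{v}{s}$, and as $v$ is still a neighbour of $s$ (edge multiplicities are unchanged by the transformation), this is precisely the membership condition, giving $v \in V_M\!\left(c^{v-}\right)$. For the reverse inclusion, I would take any $w \in V_M\!\left(c^{v-}\right)$, so in particular $w \in N^{V}(s)$, and suppose $w \neq v$. Then Equation~\eqref{eq:def_sr} gives $c^{v-}(w) = c(w) - \mult{w}{s}$, so the membership inequality $c^{v-}(w) \geq \dgr{}{w} - \mult{w}{s}$ simplifies to $c(w) \geq \dgr{}{w}$, contradicting the stability of $c$. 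Hence $w = v$, and the two inclusions together yield $V_M\!\left(c^{v-}\right) = \{v\}$.

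I do not expect a genuine obstacle here: the content of the lemma is essentially bookkeeping with the thresholds in the definition of $V_M$. The one point that requires care is recognising that removing $\mult{w}{s}$ grains at a vertex $w \neq v$ shifts both sides of the defining inequality by the same amount, so that membership in $V_M\!\left(c^{v-}\right)$ for such a $w$ would force the strictly non-stable condition $c(w) \geq \dgr{}{w}$ on the original configuration, which is impossible. It is also worth keeping in mind that $V_M$ is only defined on recurrent configurations, so the appeal to strong recurrence (ensuring $c^{v-} \in \Rec{G}$) is not merely cosmetic but is what makes the claim meaningful in the first place.
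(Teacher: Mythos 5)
Your proof is correct and follows essentially the same argument as the paper's: $c^{v-}(v) = c(v)$ preserves membership of $v$, while for $w \neq v$ the subtraction of $\mult{w}{s}$ turns the membership inequality into $c(w) \geq \dgr{}{w}$, contradicting stability of $c$. The only difference is that you make explicit the preliminary observations (that $c^{v-}$ is recurrent and that $c$ is stable) which the paper uses implicitly.
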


\begin{proof}
Since $c^{v-}(v) = c(v)$, and $v \in V_M(c)$, we have $v \in V_M\left( c^{v-} \right)$. Now if $w \neq v$, we have $c^{v-} (w) = c(v) - \mult{v}{s}$, so that $c^{v-}(w) + \mult{v}{s} = c(v) < \dgr{}{v}$ (since $c$ is stable), i.e.\ $w \notin V_M \left( c^{v-} \right)$. This completes the proof.
\end{proof}

\begin{remark}
It is natural to ask what happens if we change the ``for all'' quantifier in Definition~\ref{def:SR} with ``there exists'', i.e.\ if we consider a recurrent configuration $c \in \Rec{G}$ to be strongly recurrent when there \emph{exists} $v \in V_M(c)$ such that $c^{v-} \in \Rec{G}$. Since $c$ is recurrent, the set $V_M(c)$ is non-empty (see discussion above), so the ``exists'' condition is weaker than the ``forall'' one. Moreover, we can see that it is strictly weaker by considering the example $c = \left( c(v_1), c(v_2), c(v_3), c(v_4) \right) = (1, 1, 1, 2)$ from Figure~\ref{fig:counter_exa_SR_simpleG}. In this example, we have $V_M(c) = \{v_1, v_4\}$. We can check that $c^{v_1 -} = (1, 1, 1, 1)$ is not recurrent, since $\{v_2, v_3, v_4\}$ gives a forbidden subconfiguration. On the other hand, $c^{v_4 - } = (0, 1, 1, 2)$ is recurrent by Dhar's burning criterion: starting from $c^{v_4 - }$ and ``toppling the sink'' gives the configuration $(1, 1, 1, 3)$, after which the toppling sequence $v_4, v_3, v_2, v_1$ yields the initial configuration $c^{v_4 - }$. As such, while $c$ is not strongly recurrent in the sense of Definition~\ref{def:SR}, it would be if we replaced the quantifier ``for all'' with ``there exists''.

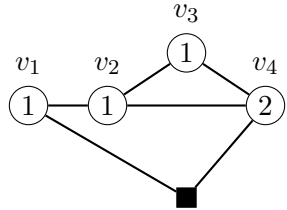
\begin{figure}[ht]
 \centering
  \begin{tikzpicture}[scale=0.35]
   \node (0) [draw, rectangle, fill=black] at (9,-3.5) {};
 \node [MyNode] (1) at (3,0) {1};
 \node [MyNode] (2) at (6,0) {1};
 \node [MyNode] (3) at (9,2) {1};
 \node [MyNode] (4) at (12,0) {2};
 
\draw [thick] (1)--(2);
\draw [thick] (1)--(0);
\draw [thick] (4)--(0);
\draw [thick] (2)--(3)--(4);
\draw [thick] (2)--(4);

 % comment
 \node at (3,1.5) {$v_1$};
 \node at (6,1.5) {$v_2$};
 \node at (9,3.5) {$v_3$};
 \node at (12,1.5) {$v_4$};
 
  \end{tikzpicture}
  \caption{Illustrating the difference between possible quantifiers in the definition of strong recurrence.
\label{fig:counter_exa_SR_simpleG}}
\end{figure}

\end{remark}

As we saw at the start of this section, the set of recurrent configurations is closed under grain addition. It is therefore also natural to ask whether this also holds for the set of strongly recurrent configurations. That is, given a strongly recurrent configuration $c$ and a vertex $v$ such that $c(v) + 1 < \dgr{}{v}$, is the configuration $c + \1{v}$ also strongly recurrent? 

In fact, for general graphs, this is not the case. Indeed, consider the configuration $c = \left( c(v_1), c(v_2), c(v_3) \right) = (1, 2, 1)$ from Figure~\ref{fig:counter_exa_SR_grain_addition} below. We have $V_M(c) = \{v_3\}$, and $c^{v_3 -} = (0, 2, 1)$ is recurrent by Dhar's burning criterion, so $c$ is strongly recurrent. However, the configuration $c' := c + \1{v_1} = (2, 2, 1)$ is not strongly recurrent. Indeed, $V_M(c') = \{v_1, v_3\}$, but $c'^{v_1 -} = (2, 2, -1)$ takes a negative value at $v_3$, so cannot be recurrent (see Remark~\ref{rem:non-negative}).

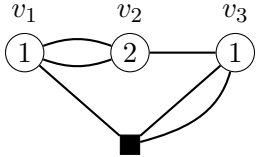
\begin{figure}[ht]
 \centering
  \begin{tikzpicture}[scale=0.35]
   \node (0) [draw, rectangle, fill=black] at (6,-3.5) {};
 \node [MyNode] (1) at (2,0) {1};
 \node [MyNode] (2) at (6,0) {2};
 \node [MyNode] (3) at (10,0) {1};
 
\draw [thick] (2)--(3)--(0)--(1);
\draw [thick, out=20, in=160] (1) to (2);
\draw [thick, out=-20, in=-160] (1) to (2);
\draw [thick, out=15, in=255] (0) to (3);

 % comment
 \node at (2,1.5) {$v_1$};
 \node at (6,1.5) {$v_2$};
 \node at (10,1.5) {$v_3$};
 
  \end{tikzpicture}
  \caption{A strongly recurrent configuration where strong recurrence is not preserved under grain addition.
\label{fig:counter_exa_SR_grain_addition}}
\end{figure}

We now see that, if we impose a straightforward additional condition on our graph $G$, which is notably satisfied for all \emph{simple} graphs, strong recurrence is in fact preserved under grain addition.

\begin{proposition}\label{pro:SR_grain_addition}
Let $G = (\Gamma, s)$ be a graph such that every edge incident to the sink has multiplicity one, i.e.\ $\mult{w}{s} \leq 1$ for all $w \in \V$. Then the set of strongly recurrent configurations is closed under grain addition. That is, for any $c \in \SRec{G}$ and $v \in \V$ such that $c(v) + 1 < \dgr{}{v}$, we have $c + \1{v} \in \SRec{G}$.
\end{proposition}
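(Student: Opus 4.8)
The plan is to check Definition~\ref{def:SR} for $c' := c + \1{u}$, where I write $u$ for the vertex at which the grain is added (the ``$v$'' of the statement). Since $c(u) < \dgr{}{u} - 1$, the grain-addition closure of $\Rec{G}$ recalled at the start of this section gives $c' \in \Rec{G}$, and $c'$ is visibly stable. So it remains to show $(c')^{w-} \in \Rec{G}$ for every $w \in V_M(c')$. As $c'$ and $c$ agree off $u$, any $w \in V_M(c') \setminus \{u\}$ already satisfies $c(w) = c'(w) \ge \dgr{}{w} - \mult{w}{s}$, hence lies in $V_M(c)$; thus $V_M(c') \setminus \{u\} \subseteq V_M(c)$.

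For $w \in V_M(c') \cap V_M(c)$ (which includes every $w \neq u$) the definitions give the identity $(c')^{w-} = c^{w-} + \1{u}$. Strong recurrence of $c$ gives $c^{w-} \in \Rec{G}$, and $c^{w-}(u) \le c(u) < \dgr{}{u} - 1$, so grain-addition closure yields $(c')^{w-} \in \Rec{G}$. This disposes of all cases except $w = u$ with $u \in V_M(c') \setminus V_M(c)$.

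In that remaining case the hypothesis $\mult{u}{s} \le 1$ forces $\mult{u}{s} = 1$ and $c(u) = \dgr{}{u} - 2$, so $(c')^{u-}(u) = \dgr{}{u} - 1$ while $(c')^{u-}(x) = c(x) - \mult{x}{s}$ for $x \neq u$, and $(c')^{u-}$ is stable. I would establish recurrence via the forbidden-subconfiguration criterion (Theorem~\ref{thm:charac_rec}, Condition~(2)): assume for contradiction that $F \subseteq \V$ is a forbidden subconfiguration for $(c')^{u-}$. If $u \in F$, then $(c')^{u-}(u) = \dgr{}{u} - 1 < \dgr{F}{u}$ forces $\dgr{F}{u} = \dgr{}{u}$, i.e.\ all edges at $u$ lie inside $F$, contradicting $\mult{u}{s} = 1 > 0$. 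Hence $u \notin F$, and $c(y) - \mult{y}{s} < \dgr{F}{y}$ for every $y \in F$.

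The crux is to contradict this last inequality using the full strength of strong recurrence. Taking a burning sequence (Theorem~\ref{thm:charac_rec}, Condition~(4)) for the recurrent configuration $c$ and letting $v_0 \in F$ be the first vertex of $F$ that burns, the burning inequality combined with $c(v_0) - \mult{v_0}{s} < \dgr{F}{v_0}$ forces $v_0 \in V_M(c) \cap F$ with $\mult{v_0}{s} = 1$, $c(v_0) = \dgr{F}{v_0} = \dgr{}{v_0} - 1$, and every non-sink neighbour of $v_0$ lying in $F$. I would then peel $v_0$ off by merging it into the sink: in the contracted graph $G'$ the restriction of $c$ is again recurrent, and $F \setminus \{v_0\}$ inherits the inequality, since in $G'$ the gained sink-multiplicity $\mult{y}{v_0}$ exactly cancels the lost internal degree to $v_0$. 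As $F \neq \{v_0\}$ (a value-$0$ pendant in $V_M(c)$ cannot coexist with $c$ being both recurrent and strongly recurrent, by a short burning argument), this sets up an induction on $\vert \V \vert$. The main obstacle is exactly this induction: the inequality becomes an equality at each $v_0$, so forbiddenness does not transfer verbatim to $c^{v_0-}$, and closing the argument requires verifying that strong recurrence --- not merely recurrence --- is inherited by the contracted instance, which is precisely the point at which the hypothesis on $c$ must be used in full.
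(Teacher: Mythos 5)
Your reduction to a single hard case is sound and matches the paper's own first steps: $c' := c + \1{u}$ is recurrent, $V_M(c')\setminus\{u\} \subseteq V_M(c)$, and for any $w \in V_M(c)$ the identity $(c')^{w-} = c^{w-} + \1{u}$ together with closure of $\Rec{G}$ under grain addition disposes of all vertices except $u$ itself when $u \in V_M(c')\setminus V_M(c)$. But that remaining case is the entire content of the proposition, and your argument for it does not close — as you yourself acknowledge in the final sentence. The proposed route (assume a forbidden subconfiguration $F$ for $(c')^{u-}$, locate the first vertex $v_0$ of $F$ in a burning sequence for $c$, contract $v_0$ into the sink, and induct) is left as a sketch whose inductive step is explicitly unresolved. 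Moreover, some of the intermediate claims are stronger than what the choice of $v_0$ actually yields: from $c(v_0) \geq \dgr{V\setminus\{\text{burned}\}}{v_0} \geq \dgr{F}{v_0} > c(v_0) - \mult{v_0}{s}$ you may conclude $\mult{v_0}{s}=1$ and $\dgr{F}{v_0} = c(v_0)$, but not that $c(v_0) = \dgr{}{v_0}-1$ or that \emph{every} non-sink neighbour of $v_0$ lies in $F$; neighbours of $v_0$ outside $F$ may have burned earlier. So the contraction step is built on an equality that need not hold, and even granting it, you have not shown that the contracted instance inherits strong recurrence, which is where the hypothesis on $c$ would have to enter. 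This is a genuine gap, not a presentational one.

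For comparison, the paper closes this case by switching to Biggs' characterisation of recurrence via compatible rooted acyclic orientations. It fixes some $w \in V_M(c)$, takes a rooted acyclic orientation $\OO$ compatible with $(c')^{w-}$, observes via Lemma~\ref{lem:V_M(c^v-)} that $w$ is the unique target of the orientation induced on $G[\V]$, hence that $\OO$ contains a directed path from $u$ to $w$; reversing all directed $u$-to-$w$ paths produces a rooted acyclic orientation $\OO'$ whose in-degrees change only at $u$ and $w$, and the hypothesis $\mult{u}{s}=\mult{w}{s}=1$ gives $(c')^{u-} = (c')^{w-} + \1{u} - \1{w}$, from which compatibility of $\OO'$ with $(c')^{u-}$ follows by checking the two vertices $u$ and $w$. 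If you want to salvage a forbidden-subconfiguration argument instead, you would need a genuinely new idea to rule out the set $F$ above; the orientation-flipping device is what substitutes for that in the paper.
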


\begin{proof}
Suppose that $G$ has no multiple edges incident to the sink, and let $c \in \SRec{G}$ and $v \in \V$ such that $c(v) + 1 < \dgr{}{v}$. 
Define $c_* := c + \1{v}$. Clearly $c_*$ is recurrent, since it results from adding one grain to the recurrent configuration $c$. 
We wish to show that $c_*$ satisfies the strong recurrence property. If $v \notin V_M(c_*)$, then $V_M(c_*) = V_M(c)$, and there is nothing to do: we just use the strong recurrence of $c$ and the fact that recurrence is preserved under grain addition. We may therefore assume that $v \in V_M(c_*)$, i.e.\ that $V_M(c_*) = V_M(c) \cup \{v\}$.

Now, if $w \in V_M(c)$, we have $c_*^{w-} = c^{w-} + \1{v}$, and this is recurrent as above. It therefore remains to show that $c_*^{v-} \in \Rec{G}$. For this, we will use a characterisation of recurrent configurations using \emph{acyclic orientations} of $G$. An \emph{orientation} of $G$ is the assignment of a direction to each edge. An orientation is called \emph{acyclic} if it contains no directed cycles. It is straightforward to check that an acyclic orientation contains at least one \emph{source} vertex, at which every edge is outgoing, and at least one \emph{target} vertex, at which every edge is incoming\footnote{The terminology \emph{souce} is more common than \emph{target} in the literature, but since our graphs already have a designated vertex called the sink, we use the latter here.}. An acyclic orientation $\OO$ of $G$ is called \emph{rooted} if the sink $s$ is the unique target vertex of $\OO$.

Given an orientation $\OO$ of $G$, and a vertex $v \in V$, we denote $\In{v}$, resp.\ $\Out{v}$, the number of incoming, resp.\ outgoing, edges at $v$ in $\OO$ (counted with multiplicity). The result we will use is the following. A configuration $c \in \Config{G}$ is recurrent if and only if there exists a rooted acyclic orientation $\OO$ of $G$ such that:
\begin{equation}\label{eq:orient_comp}
\forall v \in \V, \, c(v) \geq \In{v}.
\end{equation}
We say that such an orientation $\OO$ is \emph{compatible} with $c$. This characterisation was first stated in these terms by Biggs \cite{Biggs}, although the author credits a previous paper \cite{GZ} as having equivalent results.

We use this characterisation to show that $c_*^{v-}$ is recurrent. First, fix some vertex $w \in V_M(c)$. We know that $c_*^{w-}$ is recurrent, so there exists a rooted acyclic orientation $\OO$ compatible with $c_*^{w-}$. Consider the induced orientation $\tilde{\OO}$ on $G[\V]$ that results from deleting $s$ in $\OO$. This orientation remains acyclic, so must have at least one target vertex. By the compatibility condition, any such target vertex $u$ of $\tilde{\OO}$ satisfies:
$$ c_*^{w-}(u) \geq \In{u} = \mathrm{in}^{\tilde{\OO}}(u) = \dgr{\V}{u} = \dgr{V}{u} - \mult{u}{s},$$
i.e.\ $u \in V_M\left( c_*^{w-} \right)$. But by Lemma~\ref{lem:V_M(c^v-)}, this implies $u=w$, i.e.\ $w$ is the only target vertex of the induced orientation $\tilde{\OO}$.

In particular, in the orientation $\OO$, there must be at least one directed path from $v$ to $w$. We consider the orientation $\OO'$ where \emph{all} directed paths from $v$ to $w$ are flipped. It is straightforward to see that $\OO'$ is also a rooted acyclic orientation of $G$, and that $\mathrm{in}^{\OO'}(u) = \In{u}$ for all $u \neq v, w$. We wish to show that $\OO'$ is compatible with $c_*^{v-}$. First, note that since $v, w \in V_M \left( c_* \right)$, the vertices $v$ and $w$ are both neighbours of the sink $s$ in $G$, i.e. $\mult{v}{s} = \mult{w}{s} = 1$. Therefore we have $c_*^{v-} = c_*^{w-} + \1{v} - \1{w}$. 

By preceding remarks, it is sufficient to check the compatibility condition~\eqref{eq:orient_comp}  for $\OO'$ at $v$ and $w$. For $v$, we note that since $v \in V_M \left( c_*^{v-} \right)$, we have $c_*^{v-} \geq \dgr{}{v} - \mult{v}{s} = \dgr{}{v} - 1$. 
Moreover, since $s$ is the only target of $\OO'$, we have $\mathrm{in}^{\OO'}(v) \leq \dgr{}{v} - 1$, and the compatibility condition is ensured at $v$. Now note that in the transformation from $\OO$ to $\OO'$, at least one incoming edge at $w$ has had its direction flipped, i.e. $\mathrm{in}^{\OO'}(w) \leq \In{w} - 1$. By combining this with the compatibility condition for $\OO$, and the equality $c_*^{v-} = c_*^{w-} + \1{v} - \1{w}$, we get:
$$ c_*^{v-}(w) = c_*^{w-}(w) - 1 \geq \In{w} - 1 \geq \mathrm{in}^{\OO'}(w), $$
i.e.\ the compatibility condition holds at $w$. Therefore $\OO'$ is compatible with $c_*^{v-}$, so that $c_*^{v-}$ is indeed recurrent. This concludes the proof.
\end{proof}

\subsection{The main result}\label{subsec:main_res}

We now state and prove the main result of this paper.

\begin{theorem}\label{thm:bij_sr_GPPF}
For any graph $G$, the map $ c \mapsto \dfunc - c $ is a bijection from $\SRec{G}$ to $\PGPF{G}$.
\end{theorem}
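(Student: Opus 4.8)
The plan is to exploit the bijection $c \mapsto \dfunc - c$ between $\Rec{G}$ and $\GPF{G}$ from Theorem~\ref{thm:bij_rec_GPF}. Since $\SRec{G} \subseteq \Rec{G}$ and $\PGPF{G} \subseteq \GPF{G}$, it suffices to fix $c \in \Rec{G}$ with image $p := \dfunc - c$ and prove $c \in \SRec{G} \iff p \in \PGPF{G}$; I will establish the contrapositive, matching non-strong-recurrence of $c$ with decomposability of $p$. The first step is a translation of decomposability into configuration language. Given an ordered set partition $(A,B)$ of $\V$, a direct computation from the definitions shows that $\dfunc_{G^A} - p^A$ equals the configuration $c^A$ on $G^A$ with $c^A(w) = c(w) - \dgr{B}{w}$, and that $\dfunc_{G^B} - p^B = c|_B$ (the restriction of $c$ to $B$). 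Applying Theorem~\ref{thm:bij_rec_GPF} to $G^A$ and $G^B$ together with Proposition~\ref{pro:p^B_cond} then gives the clean reformulation: $p$ is decomposable with respect to $(A,B)$ if and only if $c^A \in \Rec{G^A}$ and $c|_B \in \Rec{G^B}$.

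A lemma that makes this manageable is that, because $c \in \Rec{G}$, recurrence of a restriction comes for free from stability: for any $B \subseteq \V$ with $c|_B$ stable on $G^B$, one has $c|_B \in \Rec{G^B}$. Indeed, a forbidden subconfiguration for $c|_B$ in $G^B$ is a set $F \subseteq B$ with $c(u) < \dgr{F}{u}$ for all $u \in F$ (the sink is irrelevant since $F \subseteq \V$), which is also a forbidden subconfiguration for $c$ in $G$ — impossible by Theorem~\ref{thm:charac_rec}(2). The same mechanism will be reused for $c^A$, which is always stable on $G^A$ since $c^A(w) < \dgr{V}{w}$ reduces to stability of $c$.

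For the direction ``$p$ decomposable $\Rightarrow c \notin \SRec{G}$'', I take a decomposing partition $(A,B)$ and let $v$ be the first vertex toppled in a burning sequence for $c^A$ on $G^A$ (Theorem~\ref{thm:charac_rec}(4)). The burning condition gives $c^A(v) \geq \dgr{A}{v}$, and stability forces $\mult{v}{s} \geq 1$; hence $c(v) = c^A(v) + \dgr{B}{v} \geq \dgr{A}{v} + \dgr{B}{v} = \dgr{}{v} - \mult{v}{s}$, so $v \in V_M(c)$. Stability of $c|_B$ on $G^B$ then yields $c^{v-}(u) = c(u) - \mult{u}{s} < \dgr{B}{u}$ for every $u \in B$ (note $u \neq v$), so that $B$ is a forbidden subconfiguration for the stable configuration $c^{v-}$, giving $c^{v-} \notin \Rec{G}$ and thus $c \notin \SRec{G}$.

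The reverse direction carries the real weight. Given $v \in V_M(c)$ with $c^{v-} \notin \Rec{G}$, note $c^{v-}$ is stable (as $c^{v-}(w) \leq c(w) < \dgr{}{w}$), so it has forbidden subconfigurations; these are closed under union, so there is a unique maximal one $B$ (the set of vertices failing to burn in $c^{v-}$). One checks $v \notin B$, since $v \in B$ would give $c(v) < \dgr{B}{v} \leq \dgr{}{v} - \mult{v}{s}$, contradicting $v \in V_M(c)$; thus $A := \V \setminus B$ contains $v$ and both blocks are non-empty. The restriction lemma gives $c|_B \in \Rec{G^B}$, and the crux is $c^A \in \Rec{G^A}$. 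Here it suffices to rule out a forbidden subconfiguration $F \subseteq A$ of $c^A$; such an $F$ satisfies $c(w) < \dgr{F \cup B}{w}$ for all $w \in F$, and I claim $F \cup B$ is then forbidden for $c^{v-}$ and strictly larger than $B$ — contradicting maximality. The check splits into $w \in F$ (where $c^{v-}(w) \leq c(w) < \dgr{F \cup B}{w}$) and $w \in B$ (where $c^{v-}(w) < \dgr{B}{w} \leq \dgr{F \cup B}{w}$), after noting $v \notin F$ by the same $V_M$ argument as above. This is precisely the main obstacle: a failure of strong recurrence at the \emph{single} vertex $v$ must be promoted to a \emph{global} partition, and only the \emph{maximal} forbidden subconfiguration of $c^{v-}$ has enough rigidity to transport the no-forbidden-subconfiguration property of $c$ over to $c^A$ — an arbitrary forbidden subconfiguration would not force $c^A$ to be recurrent. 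The contradiction gives $c^A \in \Rec{G^A}$, so $(A,B)$ decomposes $p$, completing the proof.
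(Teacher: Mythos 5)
Your proof is correct, and its overall skeleton matches the paper's: both directions are argued contrapositively, the decomposing partition in the ``not strongly recurrent'' direction is built from the maximal forbidden subconfiguration of $c^{v-}$, and the witness $v \in V_M(c)$ in the other direction is the first vertex of a burning sequence for $c^A$. Where you genuinely diverge is in the two supporting steps. First, you work entirely on the configuration side via the reformulation ``$p$ is decomposable w.r.t.\ $(A,B)$ iff $c^A \in \Rec{G^A}$ and $c|_B \in \Rec{G^B}$'', and your restriction lemma (stability of $c|_B$ on $G^B$ plus recurrence of $c$ on $G$ already forces $c|_B \in \Rec{G^B}$, since forbidden subconfigurations lift from $G^B$ to $G$) is exactly the configuration-side avatar of Proposition~\ref{pro:p^B_cond}, which the paper instead proves on the parking-function side. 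Second, and more substantively, to show $c^A \in \Rec{G^A}$ the paper ``burns down to $F$'' and exhibits an explicit burning sequence (Theorem~\ref{thm:charac_rec}(4)), whereas you rule out any forbidden subconfiguration $F \subseteq A$ of $c^A$ by showing $F \cup B$ would be a forbidden subconfiguration of $c^{v-}$ strictly containing $B$, contradicting maximality. Your version buys a cleaner, purely combinatorial use of maximality (union-closure of forbidden subconfigurations) in place of the somewhat informal ``legal partial toppling sequence'' step, at the cost of the paper's more constructive picture. Two small points you should make explicit: in the reverse direction the stability hypothesis of your restriction lemma for $c|_B$ on $G^B$ needs the one-line check $c(u) = c^{v-}(u) + \mult{u}{s} < \dgr{B}{u} + \mult{u}{s} = \dgr{B \cup \{s\}}{u}$; and applying Theorem~\ref{thm:bij_rec_GPF} to the induced subgraphs $G^A$, $G^B$ tacitly assumes they are connected, which in your setup is guaranteed precisely because a component avoiding the sink would itself be a forbidden subconfiguration (the paper glosses over the same point).
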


\begin{proof}
The map $c \mapsto \dfunc - c$ is clearly bijective (it is an involution). It therefore suffices to show that $c$ is strongly recurrent if and only if $p(c) := \dfunc - c$ is a prime $G$-parking function.

First, suppose that $c \in \Rec{G}$ is \emph{not} strongly recurrent. Then by definition there exists $v \in \V$ such that $c(v) + \mult{v}{s} \geq \dgr{V}{v}$ (i.e., $v \in V_M(c)$), but $c^{v-} \notin \Rec{G}$. Let $F \subseteq \V$ be a maximal forbidden sub-configuration for $c^{v-}$, i.e., $c^{v-}(w) < \dgr{F}{w}$ for all $w \in F$. We claim that we have $v \notin F$, so $A := \V \setminus F \neq \emptyset$ and is therefore a proper subset of $\V$. Indeed, we have:
\begin{align*}
c^{v-}(v) = c(v) &\geq \dgr{V}{v} - \mult{v}{s} \\ 
&= \dgr{\V}{v} \geq \dgr{F}{v},
\end{align*} 
so $v \notin F$, as desired. We now show that $p := \dfunc - c$ is decomposable with respect to the partition $(A,F)$.

By maximality of $F$, for $c^{v-}$ we can ``burn down to $F$'' in Dhar's burning algorithm. That is, starting from $c^{v-} + \sum\limits_{w \in \V} \mult{w}{s} \ccdot \1{w}$, we can topple all vertices in $A$ in some order. Since $c$ is obtained from $c^{v-}$ through adding grains, this remains a legal toppling sequence for $c + \sum\limits_{w \in \V} \mult{w}{s} \ccdot \1{w}$ (albeit one that does not result in stabilisation yet). In other words, there exists a sequence $v_0 = s, v_1 = v, \ldots, v_{\vert A \vert}$ such that $A = \{v_1, \dots, v_{\vert A \vert} \}$, and $c(v_i) \geq \dgr{V \setminus \{v_0, \ldots,v_{i-1}\}}{v_i}$ for all $1 \leq i \leq \vert A \vert$. 

Now consider $c^A := \dfunc^{A \cup \{s\}} - p^A \in \Config{G^A}$, and fix $w \in A$. Since $c$ is stable, we have $c(w) < \dgr{V}{w}$. Recall that $c(w) = \dgr{V}{w} - p(w)$, so that:
\begin{align*}
c^A(w) &= \dgr{A \cup \{s\}}{w} - p^A(w) \\
 & = \dgr{A \cup \{s\}}{w} - p(w) \\
 & = \dgr{V}{w} - p(w) - \left(\dgr{V}{w} - \dgr{A \cup \{s\}}{w} \right) \\
 &= c(w) - \dgr{F}{w} < \dgr{V}{w} - \dgr{F}{w} = \dgr{A \cup \{s\}}{w}, 
\end{align*}
which shows that $c^A$ is stable on $G^A$. Now by construction, for $i = 1, \dots, \vert A \vert$, we have: 
$$
c^A(v_i) = c(v_i) - \dgr{F}{v_i}
\geq \dgr{V \setminus \{v_0, \ldots, v_{i-1}\}}{v_i} - \dgr{F}{v_i}
= \dgr{A \cup \{s\} \backslash \{v_0, \ldots, v_{i-1}\}}{v_i},$$
so that $c^A \in \Rec{G^A}$ by Dhar's burning criterion (Theorem~\ref{thm:charac_rec}, Condition~(4)). Theorem~\ref{thm:bij_rec_GPF} then implies that $p^A \in \GPF{G^A}$.

We now consider $p^F := p - \dfunc^{A}$. Fix $w \in F$. Recall that $F$ is a forbidden subconfiguration for $c^{v-}$, and that $v \notin F$, so that $c^{v-}(w) = c(w) - \mult{w}{s} < \dgr{F}{w}$, i.e.\ $c(w) < \dgr{F \cup \{s\}}{w}$. Since $p(w) = \dgr{V}{w} - c(w)$, this yields:
\begin{align*}
p^F(w) &= p(w) - \dgr{A}{w} \\
&= \dgr{V}{w} - c(w) - \dgr{A}{w} \\
&= \dgr{F \cup \{s\}}{w} - c(w) >0.
\end{align*}
Finally, for the parking function $p \in \GPF{G}$, we have constructed an ordered set partition $(A,F)$ of $\V$ such that $p^A$ is a $G^A$-parking function and $p^F(w) > 0$ for all $w \in F$. From Proposition~\ref{pro:p^B_cond}, we deduce that $p$ is decomposable with respect to $(A,F)$, i.e.\ that $p$ is not prime, as desired.

We now show the converse. Suppose $p \in \GPF{G}$ is \emph{not} prime, let $(A,B)$ be a partition of $\V$ with respect to which $p$ is decomposable, i.e.\ $p^A \in \GPF{G^A}$ and $p^B \in \GPF{G^B}$. Let $c := \dfunc - p \in \Rec{G}$ be the corresponding sandpile configuration. We wish to show that $c$ is not strongly recurrent, i.e.\ that there exists $v \in V_M(c)$ such that $c^{v-}$ is not recurrent. 

We first claim that $V_M(c) \cap A \neq \emptyset$. Define $c^A := \dfunc^{A \cup \{s\}} - p^A \in \Config{G^A}$. Since $p^A$ is a $G^A$-parking function, $c^A$ is recurrent. In particular, there exists $v \in A$ such that $c^A(v) + \mult{v}{s} \geq \dgr{A \cup \{s\}}{v}$ (for this, choose $v$ to be any vertex $v_1$ in a burning sequence for $c^A$, see Theorem~\ref{thm:charac_rec}, Condition~(4)), so that:
\begin{align*}
c(v) &= \dgr{V}{v} - p(v) = \dgr{V}{v} - p^A(v)\\
 & = \dgr{A \cup \{s\}}{v} - p^A(v) + \dgr{B}{v} \\ 
 & = c^A(v) + \dgr{B}{v}\\ 
 & \geq \dgr{A \cup \{s\}}{v} - \mult{v}{s} + \dgr{B}{v} = \dgr{V}{v} - \mult{v}{s},
\end{align*} 
which implies $v \in V_M(c)$, as desired.

Finally, we want to show that $B$ is a forbidden subconfiguration for $c^{v-}$. For all $w \in B$, we have $c^{v-}(w) = c(w) - \mult{w}{s} = \dgr{V}{w} - \mult{w}{s} - p(w)$, and by using $p(w) = p^B(w) + \dgr{A}{w}$ and $p^B(w) > 0$, we obtain:
\begin{align*}
c^{v-}(w) & = \dgr{V}{w} - \mult{w}{s} - p(w) \\
 & = \dgr{V}{w} - \mult{w}{s} - \left( p^B(w) + \dgr{A}{w} \right) \\
 & = \dgr{B \cup \{s\}}{w} - \mult{w}{s} - p^B(w) \\
 & < \dgr{B \cup \{s\}}{w} - \mult{w}{s} = \dgr{B}{w},
\end{align*}
Since this holds for all $w \in B$, this means exactly that $B$ is a forbidden subconfiguration for $c^{v-}$ as desired, which implies that $c^{v-}$ is not recurrent by Theorem~\ref{thm:charac_rec}, i.e.\ that $c$ is not strongly recurrent. This completes the proof.
\end{proof}

By transferring the definition of strong recurrence to parking functions via this bijection, we get the following straightforward consequence, which gives an alternate characterisation of prime $G$-parking functions.

\begin{corollary}\label{cor:GPPF_charac}
Let $p \in \GPF{G}$, and define $V_M(p) := \{v \in \V; \, p(v) \leq \mult{v}{s} \}$.  Then $p$ is prime if and only if, for all $v \in V_M(p)$, we have $p^{v+} := p + \sum\limits_{w \in \V \setminus \{v\} } \mult{w}{s} \ccdot \1{w} \in \GPF{G}$.
\end{corollary}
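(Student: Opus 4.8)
The plan is to obtain the corollary as a direct translation of the definition of strong recurrence through the two bijections $c \mapsto \dfunc - c$ supplied by Theorems~\ref{thm:bij_rec_GPF} and~\ref{thm:bij_sr_GPPF}. Given $p \in \GPF{G}$, I would set $c := \dfunc - p$; then $c \in \Rec{G}$ by Theorem~\ref{thm:bij_rec_GPF}, and $p$ is prime if and only if $c$ is strongly recurrent by Theorem~\ref{thm:bij_sr_GPPF}. It therefore suffices to rephrase the strong recurrence of $c$ entirely in terms of $p$.

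First I would identify the two index sets. Since $c(v) = \dgr{}{v} - p(v)$, the defining inequality $c(v) \geq \dgr{}{v} - \mult{v}{s}$ of $V_M(c)$ is equivalent to $p(v) \leq \mult{v}{s}$. Moreover, because a $G$-parking function takes strictly positive values, $p(v) \leq \mult{v}{s}$ forces $\mult{v}{s} \geq 1$, i.e.\ $v \in N^V(s)$; hence the membership condition $v \in N^V(s)$ in the definition of $V_M(c)$ is automatic, and $V_M(c)$ coincides exactly with the set $V_M(p) = \{v \in \V; \, p(v) \leq \mult{v}{s}\}$ introduced in the statement.

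Next I would translate the grain-removal operation. For $v \in V_M(p)$, a direct computation from~\eqref{eq:def_sr} gives
\[
\dfunc - c^{v-} = \dfunc - c + \sum_{w \in \V \setminus \{v\}} \mult{w}{s} \ccdot \1{w} = p + \sum_{w \in \V \setminus \{v\}} \mult{w}{s} \ccdot \1{w} = p^{v+}.
\]
Applying Theorem~\ref{thm:bij_rec_GPF} once more, $c^{v-} \in \Rec{G}$ if and only if $p^{v+} = \dfunc - c^{v-} \in \GPF{G}$.

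Assembling these pieces, $c$ is strongly recurrent --- i.e.\ $c^{v-} \in \Rec{G}$ for all $v \in V_M(c)$ --- if and only if $p^{v+} \in \GPF{G}$ for all $v \in V_M(p)$, and by Theorem~\ref{thm:bij_sr_GPPF} the former condition is equivalent to $p$ being prime. This is exactly the claimed characterisation. Since every step is an equivalence already established by earlier results, I do not expect a serious obstacle; the only point requiring genuine care is the verification that $V_M(c) = V_M(p)$, and in particular that the condition $v \in N^V(s)$ is subsumed by the strict positivity of the values of a $G$-parking function.
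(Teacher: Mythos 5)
Your proposal is correct and is exactly the argument the paper intends: the corollary is stated as a ``straightforward consequence'' of transferring Definition~\ref{def:SR} through the bijection $c \mapsto \dfunc - c$ of Theorems~\ref{thm:bij_rec_GPF} and~\ref{thm:bij_sr_GPPF}, and your verification that $V_M(c) = V_M(p)$ (with the neighbour-of-$s$ condition subsumed by positivity) and that $\dfunc - c^{v-} = p^{v+}$ fills in precisely the details the paper leaves implicit.
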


%%%%%%%%%%%%%%%%%%%%%%%% SECTION 3 %%%%%%%%%%%%%%%%%%%%%

\section{Applications to graph families}\label{sec:SR_graphs}

In this section, we study parking functions and strongly recurrent configurations on graph families with high degrees of symmetry. We begin with a useful technical lemma. For a graph $G$ and vertex $v \in \V(G)$, the map $\Del{v}$ maps a function $p : \V(G) \rightarrow \N$ to the function $p' : \V(G \setminus \{v\}) \rightarrow \N$, where $p'$ is simply the restriction of $p$ to vertices of $G \setminus \{v\}$.

\begin{lemma}\label{lem:sym_PPFtoPF}
Let $G = (\Gamma, s)$ be a graph, and $v \in \V(G)$ be a non-sink vertex such that for all $w \neq s, v$, we have $\mult{w}{s} = \mult{w}{v}$. Let $p \in \PGPF{G}$ be a prime $G$-parking function such that $v \in V_M(p)$. Then we have $\Del{v}(p) \in \GPF{G \setminus \{v\}}$.
\end{lemma}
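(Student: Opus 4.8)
The plan is to verify directly the defining inequality of Definition~\ref{def:GPF} for $\Del{v}(p)$ on the graph $G \setminus \{v\}$. The non-sink vertex set of $G \setminus \{v\}$ is $\V \setminus \{v\}$, with the sink still $s$, so I must show that for every non-empty $S \subseteq \V \setminus \{v\}$ there exists $w \in S$ with $p(w) \leq \dgr{\bar{S}}{w}$, where $\bar{S} := (V \setminus \{v\}) \setminus S$ denotes the complement of $S$ in $G \setminus \{v\}$. Since $v \notin \bar{S}$ and deleting $v$ only removes edges incident to $v$ (none of which reach $\bar{S}$), the degree $\dgr{\bar{S}}{w}$ computed in $G \setminus \{v\}$ coincides with the same degree computed in $G$, which is the quantity I will actually manipulate.

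The central tool is Corollary~\ref{cor:GPPF_charac}. Since $p$ is prime and $v \in V_M(p)$, that corollary gives $p^{v+} := p + \sum_{u \in \V \setminus \{v\}} \mult{u}{s} \ccdot \1{u} \in \GPF{G}$. The reason for introducing $p^{v+}$ is that the twin hypothesis $\mult{w}{s} = \mult{w}{v}$ for all $w \neq s, v$ turns the added grains into exactly the edge count to the deleted vertex: for every $w \in \V \setminus \{v\}$ we have $p^{v+}(w) = p(w) + \mult{w}{s} = p(w) + \mult{w}{v}$.

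Now I would fix a non-empty $S \subseteq \V \setminus \{v\}$ and apply the $G$-parking property of $p^{v+}$ to $S$ viewed as a subset of $\V$ (crucially to $S$ itself, not to $S \cup \{v\}$, since the latter could return the useless index $w = v$). This yields some $w \in S$ with $p^{v+}(w) \leq \dgr{S^c}{w}$, where $S^c = V \setminus S$. Because $v \notin S$, I can split $S^c = \bar{S} \sqcup \{v\}$, giving $\dgr{S^c}{w} = \dgr{\bar{S}}{w} + \mult{w}{v}$. Substituting $p^{v+}(w) = p(w) + \mult{w}{v}$, the term $\mult{w}{v}$ cancels on both sides and leaves $p(w) \leq \dgr{\bar{S}}{w}$. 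As $w \in S \subseteq \V \setminus \{v\}$ and $S$ was arbitrary, this is exactly the parking condition needed for $\Del{v}(p)$, so $\Del{v}(p) \in \GPF{G \setminus \{v\}}$.

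This argument has no genuinely hard step; the only thing to get right is the bookkeeping. The conceptual content — and the reason the statement holds — is that primeness, via Corollary~\ref{cor:GPPF_charac}, permits adding $\mult{w}{s}$ grains at each $w \neq v$, and the twin condition makes this grain surplus coincide exactly with the degree deficit incurred by deleting $v$, so the two cancel cleanly. The mild obstacle is therefore purely a matter of selecting the right object to test ($p^{v+}$ rather than $p$) and the right subset ($S$ rather than $S \cup \{v\}$); once these choices are made, the inequality falls out by direct substitution.
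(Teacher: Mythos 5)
Your proof is correct and follows essentially the same route as the paper's: both apply Corollary~\ref{cor:GPPF_charac} to obtain $p^{v+} \in \GPF{G}$, test the parking condition on $S$ viewed as a subset of $\V(G)$, and use the twin hypothesis $\mult{w}{s} = \mult{w}{v}$ to cancel the added grains against the degree contribution of the deleted vertex. The bookkeeping (splitting $V \setminus S$ as $\bigl((V\setminus\{v\})\setminus S\bigr) \cup \{v\}$) matches the paper's computation exactly.
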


\begin{proof}
To simplify notation, we write $V = V(G)$ and $\V = \V(G)$. We then use $'$ to indicate removal of the vertex $v$, i.e.\ $G' = G \setminus \{v\}$, $V' = V(G') = V \setminus \{v\}$, $\V' = V' \setminus \{s\}$, and $p' = \Del{v}(p)$. We wish to show that $p'$ is a $G'$-parking function. Let $S \subseteq \V'$. We want to find $w \in S$ such that $p'(w) = p(w) \leq \dgr{V' \setminus S}{w}$. But $p$ is a prime $G$-parking function, so by Corollary~\ref{cor:GPPF_charac} we have $p^{v+} \in \GPF{G}$. Since $S \subseteq \V$, there exists $w \in S$ such that $p^{v+}(w) \leq \dgr{V \setminus S}{w}$. We re-write this as $p(w) + \mult{w}{s} \leq \dgr{V' \cup \{v\} \setminus S}{w} = \dgr{V' \setminus S}{w} + \mult{w}{v}$, and the result immediately follows from the assumption of the lemma.
\end{proof}

\begin{remark}\label{rem:del_cond}
The condition on $v$ in Lemma~\ref{lem:sym_PPFtoPF} may seem a little non-intuitive, but it holds for example when the transposition $(vs)$ is an isomorphism of $\Gamma$, which is the case for a number of highly symmetrical graph families.
\end{remark}

\subsection{Warm-up: wheel and complete graph cases}\label{subsec:complete}

As a warm-up, we consider the cases where $G$ is a wheel or complete graph. The wheel graph $W_n^0$ is the graph consisting of a cycle on vertices $[n]$ and an additional central vertex $0$ connected to each vertex of the cycle through a single edge (see Figure~\ref{fig:wheel}). The central vertex $0$ is the sink of the graph. For convenience, we write $c = (c_1, \dots, c_n) = (c(1), \dots, c(n))$ for a configuration $c \in \Config{W_n^0}$.

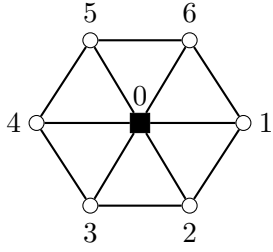
\begin{figure}[ht]
  \centering
  \begin{tikzpicture}[scale=0.55]
  \sink
  \node [MyNode] (1) at (2.5,0) {};
  \node [MyNode] (2) at (1.2,-2) {};
  \node [MyNode] (3) at (-1.2,-2) {};
  \node [MyNode] (4) at (-2.5,0) {};
  \node [MyNode] (5) at (-1.2,2) {};
  \node [MyNode] (6) at (1.2,2) {};
  \foreach \xx in {1,...,6}
    \draw [thick] (0)--(\xx);
  \draw [thick] (1)--(2)--(3)--(4)--(5)--(6)--(1);
  \node [above, yshift=3pt] at (0) {$0$};
  \node [right, xshift=2pt] at (1) {$1$};
  \node [below, yshift=-3pt] at (2) {$2$};
  \node [below, yshift=-3pt] at (3) {$3$};
  \node [left, xshift=-2pt] at (4) {$4$};
  \node [above, yshift=3pt] at (5) {$5$};
  \node [above, yshift=3pt] at (6) {$6$};
  \end{tikzpicture}
  \caption{The wheel graph $W_6^0$.\label{fig:wheel}}
\end{figure}

In our setting, a stable non-negative configuration is a word $c \in \{0, 1, 2\}^n$. Recurrent configurations have been well-studied on wheel graphs, see~\cite{SelWheel} and references therein: a configuration $c$ is recurrent if and only if $c$ contains at least one occurrence of $2$ and, for any indices $i, j \in [n]$ such that $c_i = c_j = 0$, there exists $k$ in the cyclic interval $(i,j)^{\mathrm{cyc}}$ such that $c_k = 2$. Here, the cyclic interval $(i,j)^{\mathrm{cyc}}$ is simply the standard interval $(i,j)$ if $i < j$, and the union $(i, n] \cup [1, j)$ if $i \geq j$.

Now let us examine the strong recurrence condition for such $c$. First, note that $V_M(c) = \{i \in [n]; \, c_i = 2\}$, and that if $i \in V_M(c)$, we have $c^{i-} = c - \sum\limits_{\begin{footnotesize}\begin{array}{c} j \in [n]\\ j \neq i \end{array}\end{footnotesize}} \1{j}$. In particular, if $c$ is strongly recurrent it must not have any vertex with $0$ grains, i.e.\ we must have $c \in \{1, 2\}^n$. Moreover, we claim that there must exist at most one index $j \in [n]$ such that $c_j = 1$. Indeed, suppose there exist $j \neq j'$ with $c_j = c_{j'} = 1$, and fix $i \in V_M(c)$, i.e.\ $c_i = 2$. Then one of the cyclic intervals $(j,j')^{\mathrm{cyc}}$ and $(j',j)^{\mathrm{cyc}}$ does not contain $i$, and since $i$ is the only vertex with $2$ grains in $c^{i-}$, the configuration $c^{i-}$ cannot be recurrent by the above characterisation. We have therefore shown that if $c \in \SRec{W_n^0}$, then we have $c \in \{1,2\}^n$ and $\left\vert \{ i \in [n]; \, c_i = 1\} \right\vert \leq 1$. It is straightforward to show the converse, namely that if a configuration has two grains at each vertex, except perhaps one vertex where it has one grain, then it is strongly recurrent. We summarise these findings in the following proposition.

\begin{proposition}\label{pro:wheel_sr}
Let $c = (c_1, \dots, c_n) \in \Config{W_n^0}$. Then $c$ is strongly recurrent if and only if we have $c \in \{1,2\}^n$ and $\left\vert \{ i \in [n]; \, c_i = 1\} \right\vert \leq 1$. In particular, we have $\left\vert \SRec{W_n^0} \right\vert = \left\vert \PGPF{W_n^0} \right\vert = n+1$.
\end{proposition}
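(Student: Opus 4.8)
The plan is to prove the ``if and only if'' characterisation directly from the wheel-graph recurrence criterion recalled above, and then to read off the count. The first thing I would do is translate the general definitions into the wheel setting. Since every non-sink vertex of $W_n^0$ has degree $3$ with exactly one edge to the sink, we have $\dgr{}{i} - \mult{i}{0} = 2$ for all $i \in [n]$. Hence $V_M(c) = \{i \in [n] : c_i = 2\}$, and for $i \in V_M(c)$ the configuration $c^{i-}$ is obtained from $c$ by removing a single grain from every vertex other than $i$. With this dictionary in hand the two implications can be handled separately, and the count follows at the end via Theorem~\ref{thm:bij_sr_GPPF}.

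For the forward implication I would assume $c \in \SRec{W_n^0}$. Being recurrent, $c$ lies in $\{0,1,2\}^n$ and contains at least one $2$, so $V_M(c) \neq \emptyset$; fix some $i \in V_M(c)$. Recurrent configurations are non-negative (Remark~\ref{rem:non-negative}), and $c^{i-}$ lowers every coordinate $j \neq i$ by one, so no coordinate other than $i$ can be $0$; as $c_i = 2$, this forces $c \in \{1,2\}^n$. To rule out two $1$'s, I would suppose $c_j = c_{j'} = 1$ with $j \neq j'$. Both indices differ from $i$ (since $c_i = 2$), so $c^{i-}_j = c^{i-}_{j'} = 0$, and $i$ is the unique coordinate still equal to $2$ in $c^{i-}$. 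One of the two cyclic arcs between $j$ and $j'$ avoids $i$ and hence contains no $2$, violating the wheel criterion for $c^{i-}$; this contradicts $c^{i-} \in \Rec{W_n^0}$.

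For the converse I would take $c \in \{1,2\}^n$ with at most one coordinate equal to $1$. Then $c$ has no $0$, so the two-zero condition of the criterion is vacuous, and $c$ contains a $2$, so $c$ is recurrent. For any $i \in V_M(c)$, in $c^{i-}$ the coordinate $i$ stays equal to $2$, each other $2$ drops to $1$, and the at most one $1$ drops to $0$; thus $c^{i-}$ has a unique $2$ and at most one $0$, so the two-zero condition is again vacuous and $c^{i-}$ is recurrent. Since this holds for every $i \in V_M(c)$, the configuration $c$ is strongly recurrent. Finally, the configurations meeting the characterisation are the all-$2$ word together with the $n$ words with a single $1$, giving $n+1$, and $\vert \PGPF{W_n^0} \vert = \vert \SRec{W_n^0} \vert$ by Theorem~\ref{thm:bij_sr_GPPF}.

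The step I expect to be the main obstacle is the ``at most one $1$'' direction, since it is the only place where the cyclic geometry of the cycle genuinely enters. The key observation that makes it clean is that to contradict strong recurrence it suffices to exhibit a \emph{single} $i \in V_M(c)$ for which $c^{i-}$ fails the criterion, and that any chosen $i$ automatically sits in exactly one of the two arcs cut out by a pair of $1$-coordinates, leaving the other arc devoid of $2$'s. Everything else reduces to the non-negativity of recurrent configurations and bookkeeping of how $c^{i-}$ shifts each coordinate.
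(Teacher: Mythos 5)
Your proposal is correct and follows essentially the same route as the paper: the same identification of $V_M(c)$ and $c^{i-}$ on the wheel, the same non-negativity argument forcing $c \in \{1,2\}^n$, and the same cyclic-arc argument ruling out two $1$'s; you merely spell out the converse that the paper dismisses as straightforward. The only cosmetic point is that when $c^{i-}$ has exactly one $0$ the two-zero condition is satisfied (via the $2$ at position $i$) rather than strictly vacuous, but this does not affect the argument.
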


We now consider the case where $G = K_n^0$ is the complete graph with vertex set $[n]_0 := [n] \cup \{0\}$ and sink $0$. Recall from Proposition~\ref{pro:GPF_PF_comp} that the set of $K_n^0$-parking functions is exactly the set of classical parking functions in this case. 

\begin{proposition}\label{pro:classical_PPF_complete_graph}
We have $\PPF{n} = \PGPF{K_n^0}$. In particular, we have $\left\vert \PGPF{K_n^0} \right\vert = \left\vert \SRec{K_n^0} \right\vert = (n-1)^{n-1}$.
\end{proposition}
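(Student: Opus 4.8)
The plan is to leverage Proposition~\ref{pro:GPF_PF_comp}, which already gives $\GPF{K_n^0} = \PF{n}$, so that both $\PPF{n}$ and $\PGPF{K_n^0}$ are subsets of the same set of classical parking functions. It therefore suffices to prove that the two notions of primeness coincide: a parking function $p = (p_1, \dots, p_n)$ is prime in the classical sense (its only breakpoint is at $n$) if and only if it is prime as a $K_n^0$-parking function (it admits no decomposition in the sense of Definition~\ref{def:prime_GPF}). I will establish this by showing that the decompositions of $p$ as a $K_n^0$-parking function are in exact correspondence with its nontrivial breakpoints.

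First I would record the relevant structural facts about $K_n^0$. For any $A \subseteq [n]$ the induced subgraph $G^A = K_n^0[A \cup \{0\}]$ is itself a complete graph rooted at its sink, i.e.\ $G^A \cong K_{\vert A \vert}^0$; since being a parking function depends only on the multiset of values and not on the labelling of the cars (Theorem~\ref{thm:PF_charac}, Condition~(3)), Proposition~\ref{pro:GPF_PF_comp} identifies $\GPF{G^A}$ with $\PF{\vert A \vert}$. Moreover, for $v \in B$ (so $v \notin A$) one has $\dgr{A}{v} = \vert A \vert$, so that the decomposed function of Definition~\ref{def:prime_GPF} is simply $p^B(v) = p(v) - \vert A \vert$, while $p^A(v) = p(v)$ for $v \in A$. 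For the forward direction, suppose $p$ has a breakpoint at some $j < n$, and set $A := \{ i \in [n]; \, p_i \leq j \}$ and $B := [n] \setminus A$, so that $\vert A \vert = j$ and $1 \leq \vert A \vert \leq n-1$. On $A$ the values $p(v) = p^A(v)$ lie in $[j]$ and form a parking function on $j$ spots (this is exactly the classical sub-parking-function $p^{\leq j}$), so $p^A \in \GPF{G^A}$; on $B$ we have $p^B(v) = p(v) - j > 0$, and these values form a parking function on $n-j$ spots (the classical $p^{>j}$), so $p^B \in \GPF{G^B}$. Hence $p$ is decomposable, i.e.\ not prime as a $K_n^0$-parking function.

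For the reverse direction, suppose $p$ is decomposable with respect to an ordered set partition $(A,B)$, and write $j := \vert A \vert$, so $1 \leq j \leq n-1$. Because $p^A \in \GPF{G^A} = \PF{j}$, every value $p(v)$ with $v \in A$ lies in $[j]$, giving $A \subseteq \{ i; \, p_i \leq j \}$; because $p^B \in \GPF{G^B} = \PF{n-j}$, every value $p^B(v) = p(v) - j$ with $v \in B$ lies in $[n-j]$, giving $p(v) > j$ and hence $B \subseteq \{ i; \, p_i > j \}$. Since $(A,B)$ partitions $[n]$, these two inclusions force $\{ i; \, p_i \leq j \} = A$, whence $\vert \{ i; \, p_i \leq j \} \vert = j$ and $j$ is a nontrivial breakpoint of $p$. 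Combining the two directions yields $\PPF{n} = \PGPF{K_n^0}$. The ``in particular'' statement then follows by recalling Gessel's count $\vert \PPF{n} \vert = (n-1)^{n-1}$ from Subsection~\ref{subsec:PPF}, together with the bijection of Theorem~\ref{thm:bij_sr_GPPF}, which gives $\vert \SRec{K_n^0} \vert = \vert \PGPF{K_n^0} \vert$.

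I expect the main obstacle to be the reverse direction, specifically the claim that an \emph{arbitrary} decomposing partition $(A,B)$ must satisfy $A = \{ i; \, p_i \leq \vert A \vert \}$, rather than being some unrelated set partition of $[n]$. The resolution hinges on the fact that a parking function on $k$ spots takes all its values in $[k]$ (Remark~\ref{rem:GPF_upper_bound} applied in the complete-graph setting), which pins down both $A$ and $B$ in terms of the threshold $j = \vert A \vert$; once this is in hand, the correspondence with breakpoints is immediate and the equality of the two primeness notions follows.
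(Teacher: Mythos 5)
Your proof is correct and follows essentially the same route as the paper's: both directions identify decomposing partitions $(A,B)$ of $K_n^0$-parking functions with nontrivial breakpoints via $A = \{ i \in [n];\, p_i \leq \vert A \vert \}$, exactly as in the paper's (more terse) argument. You simply write out in full the steps the paper labels ``straightforward to check,'' including the key observation that a parking function on $k$ spots takes values in $[k]$, which pins down $A$ and $B$ in the converse direction.
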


\begin{proof}
If $p = (p_1, \dots, p_n) \in \PF{n}$ is \emph{not} prime, then it has some breakpoint $1 \leq j < n$ such that $\left\vert \{i \in [n]; \, p_i \leq j \} \right\vert = j$. It is straightforward to check that if $A := \{i \in [n]; \, p_i \leq j \}$, then $p$ is decomposable with respect to the partition $(A, [n] \setminus A)$ in the sense of Definition~\ref{def:prime_GPF}. Conversely, if $p \in \GPF{K_n^0}$ is decomposable with respect to a partition $(A,B)$, then $\vert A \vert$ is a breakpoint for $p$. The enumeration was discussed previously in Section~\ref{subsec:PPF}.
\end{proof}

Throughout the remainder of this section, it will be natural to consider parking functions up to the symmetries of the graph. We say that a parking function $p = (p_1, \dots, p_n)$ is \emph{non-decreasing} if $p_1 \leq \dots \leq p_n$, and denote $\incPF{n}$, resp.\ $\incPPF{n}$ the set of non-decreasing parking functions, resp.\ non-decreasing prime parking functions of size $n$. Similarly, we denote $\decRec{K_n^0}$, resp.\ $\decSR{K_n^0}$, the set of non-increasing recurrent, resp.\ non-increasing strongly recurrent, configurations on $K_n^0$, with which non-decreasing (prime) parking functions are in bijection.

\begin{proposition}\label{pro:incPPF_complete_graph}
There is a bijection from $\incPPF{n}$ to $\incPF{n-1}$. In particular, we have $\left\vert \incPPF{n} \right\vert = \left\vert \decSR{K_n^0} \right\vert = \mathrm{Cat}_{n-1}$, where $\mathrm{Cat}_k := \frac{1}{k+1} \binom{2k}{k}$ is the $k$-th Catalan number.
\end{proposition}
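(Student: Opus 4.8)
The plan is to recast primeness as a simple coordinate condition on non-decreasing parking functions, then exhibit an explicit shift bijection. First I would use Proposition~\ref{pro:classical_PPF_complete_graph} to identify $\incPPF{n}$ with the prime members of $\incPF{n}$. For a non-decreasing $p = (p_1, \dots, p_n) \in \incPF{n}$, the set $\{i : p_i \le j\}$ is an initial segment $\{1, \dots, k\}$ with $k = \vert \{i : p_i \le j\} \vert$, so an index $j < n$ is a breakpoint precisely when $p_j \le j < p_{j+1}$; combined with the parking inequality $p_{j+1} \le j+1$ this forces $p_{j+1} = j+1$. Hence $p$ is prime if and only if there is no index $i \in \{2, \dots, n\}$ with $p_i = i$, which—again using $p_i \le i$—is equivalent to $p_i \le i-1$ for all $i \ge 2$. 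In particular $p_1 = 1$ and $p_2 = 1$ are both forced.

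With this characterization in hand I would define $\phi : \incPPF{n} \to \incPF{n-1}$ by deleting the (forced) leading coordinate, i.e.\ $\phi(p) := (p_2, \dots, p_n)$; on the graph side this is exactly $\Del{1}(p)$, and its membership in $\incPF{n-1}$ is guaranteed abstractly by Lemma~\ref{lem:sym_PPFtoPF} (here $1 \in V_M(p)$ and all edge multiplicities in $K_n^0$ equal $1$). Concretely, writing $q_j := p_{j+1}$, the sequence $q$ is non-decreasing with $q_1 = p_2 = 1$ and $q_j = p_{j+1} \le j$, so $q \in \incPF{n-1}$. The inverse prepends a $1$: given $q = (q_1, \dots, q_{n-1}) \in \incPF{n-1}$ (for which $q_1 = 1$ automatically), set $\psi(q) := (1, q_1, \dots, q_{n-1})$; one checks $\psi(q)$ is non-decreasing with $\psi(q)_i \le i-1$ for $i \ge 2$, hence lies in $\incPPF{n}$, and that $\phi, \psi$ are mutually inverse. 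This establishes the bijection.

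For the enumeration I would invoke the standard fact (already noted in Remark~\ref{rem:lattice_paths} via the Dyck-path encoding) that $\vert \incPF{m} \vert = \mathrm{Cat}_m$, giving $\vert \incPPF{n} \vert = \vert \incPF{n-1} \vert = \mathrm{Cat}_{n-1}$. Finally, the identity $\vert \decSR{K_n^0} \vert = \vert \incPPF{n} \vert$ follows from Theorem~\ref{thm:bij_sr_GPPF}: in $K_n^0$ every non-sink vertex has degree $n$, so the bijection $c \mapsto \dfunc - c$ reads $p(v) = n - c(v)$ and therefore carries non-increasing strongly recurrent configurations exactly onto non-decreasing prime parking functions. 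I expect the only genuine content to be the breakpoint-to-inequality translation of the first step; once primeness is rewritten as $p_i \le i-1$ for $i \ge 2$, the bijection and the count are routine, the sole point of care being that $p_2 = 1$ is forced, which is what ensures the shifted sequence never takes the value $0$.
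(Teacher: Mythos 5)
Your proof is correct and is essentially the paper's own argument: the paper realises the same bijection via the Dyck-path encoding of Remark~\ref{rem:lattice_paths} (a prime parking function is a path touching the $x$-axis only at its endpoints, and one strips the first and last steps), which Remark~\ref{rem:PPF_remove_1} explicitly identifies with deleting the forced entry $p_1 = 1$ --- i.e.\ your map $\phi$. Your coordinate characterisation of primeness ($p_i \leq i-1$ for all $i \geq 2$) and the explicit inverse $\psi$ simply spell out at the level of entries what the lattice-path picture encodes, so the two proofs differ only in presentation, not in substance.
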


\begin{proof}
This can be easily seen via the bijection between non-decreasing parking functions to Dyck paths from Remark~\ref{rem:lattice_paths}. A prime parking function corresponds to a Dyck path which only intersects the $x$-axis at its beginning and end points. Removing the first and last steps of such a path yields a Dyck path of size one less, and this operation is clearly bijective.
\end{proof}

\begin{remark}\label{rem:PPF_remove_1}
The operation of removing the first and last steps of the Dyck path corresponds to removing $p_1 = 1$ from the associated non-decreasing parking function, yielding a parking function $p' = (p_2, \dots, p_n) \in \PF{n-1}$. In fact, the property ``remove a $1$ entry to get a parking function of size one less'' is often taken as a definition of prime parking functions (see~\cite[Exercise~5.49]{StanEC}). Lemma~\ref{lem:sym_PPFtoPF} shows that this property holds for general prime parking functions on ``symmetrical'' graphs. We will see in later subsections that the property in fact fully characterises prime parking functions (i.e., can be used as an alternate definition as in the classical case) on several other graph families.
\end{remark}

\subsection{The complete tripartite case}\label{subsec:tripartite}

In this section, we consider the complete tripartite graph $K_{p,q}^0$. This graph consists of three independent sets: $P = \{v_1^p, v_2^p, \ldots, v_p^p\}$, $Q = \{v_1^q, v_2^q, \ldots, v_q^q\}$, and $S = \{v_0\}$ (the sink vertex), with a single edge connecting any pair of vertices in different sets (see Figure~\ref{fig:tripartite}). We will see that our notion of prime parking functions on such graphs extends the notion of prime $(p,q)$-parking functions defined by Armon \emph{et al.}~\cite{ABHKMMY}. To unify and simplify notation, we write $p = (p^p; p^q) = (p^p_1,\dots, p^p_p; p^q_1,\dots p^q_q) = \left( p(v_1^p), \dots, p(v^p_p); p(v_1^q), \dots, p(v^q_q) \right) $ for a $K_{p,q}^0$-parking function. We use analogous notation for the corresponding recurrent configuration $c := \dfunc - p \in \Rec{K_{p,q}^0}$.

\begin{figure}[ht]
 \centering
  \begin{tikzpicture}[scale=0.35]
 \begin{scope}[shift={(0,-3)}] \sink \end{scope}
 \node [MyNode] (1) at (3,0) {};
 \node [MyNode] (2) at (6,0) {};
 \node [MyNode] (3) at (9,0) {};
 \node [MyNode] (4) at (12,0) {};
 \node [MyNode] (5) at (15,0) {};
 
 \node [MyNode] (6) at (4.5,-6) {};
 \node [MyNode] (7) at (7.5,-6) {};
 \node [MyNode] (8) at (10.5,-6) {};
 \node [MyNode] (9) at (13.5,-6) {};

 \foreach \xx in {1,...,9}
   \foreach \yy in {0}
     \draw [thick] (\xx)--(\yy);
     
 \foreach \xx in {1,...,5}
   \foreach \yy in {6,...,9}
     \draw [thick] (\xx)--(\yy);

 % node labels
 \foreach \xx in {1,...,5}
   \node [above, yshift=2pt] at (\xx) {$v_{\xx}^p$};
 \foreach \yy [evaluate=\yy as \zz using {int(\yy-5)}] in {6,...,9} {
   \node [below, yshift=-2pt] at (\yy) {$v_{\zz}^q$};
 }
  \end{tikzpicture}
  \caption{The complete tripartite graph $K_{5,4}^0$.\label{fig:tripartite}}
\end{figure}
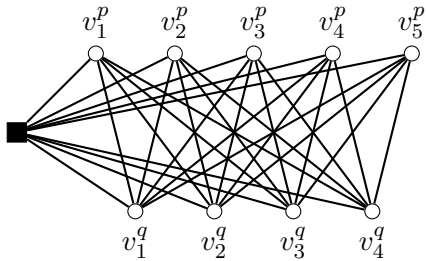

Cori and Poulalhon~\cite{CorPou} introduced a notion of $(p,q)$-parking functions to characterise recurrent configurations of the ASM on $K_{p,q}^0$. We give here an equivalent definition due to Snider and Yan~\cite{YanUpq}. Given two \emph{non-decreasing} vectors $\mathbf{a} = (a_1, \dots, a_p) \in \left([q] \cup \{0\}\right)^p, \mathbf{b} = (b_1, \dots, b_q) \in \left( [p] \cup \{0\} \right)^q$, we define two lattice paths $\LL^{\perp}_{\mathbf{a}}$ and $\LL_{\mathbf{b}}$, from $(0,0)$ to $(p,q)$ with steps $E := (1,0)$ and $N := (0,1)$, as follows:
\begin{itemize}
\item The path $\LL^{\perp}_{\mathbf{a}}$ has its $i$-th $E$ step at $y$-coordinate $a_i$ for each $i \in [p]$, i.e.~ $\LL^{\perp}_{\mathbf{a}} =  N^{a_1} E N^{a_2 - a_1} E \dots $;
\item The path $\LL_{\mathbf{b}}$ has its $j$-th $N$ step at $x$-coordinate $b_j$ for each $j \in [q]$, i.e.~ $\LL_{\mathbf{b}} = E^{b_1} N E^{b_2 - b_1} N \dots $;
\end{itemize}
Figure~\ref{fig:ab_path} shows an example of this construction. Given two lattice paths $\LL, \LL'$, we say that $\LL$ lies \emph{weakly above} $\LL'$ if for any $i \in [p]$, the $y$-coordinate of the $i$-th $E$ step in $\LL$ is greater than or equal to the $y$-coordinate of the $i$-th $E$ step in $\LL'$.

\begin{figure}[ht]
 \centering
 \begin{tikzpicture}[scale=0.6]
 \draw (0,0) grid (5,4);
 %upper
 \draw [blue, very thick] (0,0)--(0,2)--(1,2)--(1,3)--(2,3)--(2,4)--(5,4);
 \node [blue] at (0.5,2.5) {$\mathcal{L}_{\mathbf{b}}$};
 \node [blue] at (-0.5,0.5){0};
 \node [blue] at (-0.5,1.5){0};
 \node [blue] at (-0.5,2.5){1};
 \node [blue] at (-0.5,3.5){2};
  
 %lower
 \draw [purple, very thick] (0,0)--(2,0)--(2,2)--(4,2)--(4,3)--(5,3)--(5,4);
 \node [purple] at (2.6,0.6) {$\mathcal{L}^{\perp}_{\mathbf{a}}$};
 \node [purple] at (0.5,-0.5){0};
 \node [purple] at (1.5,-0.5){0};
 \node [purple] at (2.5,-0.5){2};
 \node [purple] at (3.5,-0.5){2}; 
 \node [purple] at (4.5,-0.5){3}; 
 \end{tikzpicture}
  \caption{The lattice paths corresponding to the vectors $\mathbf{a} = (0,0,2,2,3)$, $\mathbf{b} = (0,0,1,2)$; here we have $p = 5$ and  $q = 4$.\label{fig:ab_path}}
\end{figure}
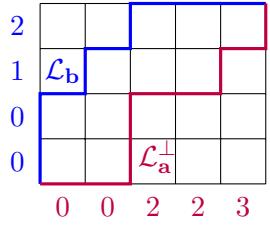

\begin{definition}[{\cite[Theorem~3.4]{YanUpq}, see also~\cite[Definition~2.4]{ABHKMMY}}]\label{def:pq_ab}
Let $p = (p^p; p^q) = (p^p_1,\dots, p^p_p; p^q_1,\dots p^q_q) = \left( p(v_1^p), \dots, p(v^p_p); p(v_1^q), \dots, p(v^q_q) \right) $ with $p^p \in [q+1]^p$ and $p^q \in [p+1]^q$. We say that $p$ is a \emph{$(p,q)$-parking function} if the lattice path $\LL_{\inc{p^q} - \func{1}}$ lies weakly above the lattice path $\LL^{\perp}_{\inc{p^p} - \func{1}}$. We denote by $\PF{p,q}$ the set of $(p,q)$-parking functions.
\end{definition}

Note that unlike in~\cite{CorPou, YanUpq, ABHKMMY}, here we need to subtract $1$ from the (re-arranged) entries of the vectors $p^p$ and $p^q$ to construct the lattice paths (see also Remark~\ref{rem:GPF_0_1}).

\begin{example}\label{ex:pq_PF}
Consider the vectors $\mathbf{a} = (0,0,2,2,3)$ and $\mathbf{b} = (0,0,1,2)$ from Figure~\ref{fig:ab_path}, with $p=5$ and $q=4$. Add $1$ to each entry to get $\mathbf{a}' = (1,1,3,3,4)$ and $\mathbf{b}' = (1,1,2,3)$. Since $\LL_{\mathbf{b}}$ lies weakly above $\LL^{\perp}_{\mathbf{a}}$, any re-arrangement of $\mathbf{a}'$ and $\mathbf{b}'$ will yield a parking function. For example, we have $(3,4,1,1,3;1,3,2,1) \in \PF{5,4}$.
\end{example}

Cori and Poulalhon~\cite[Proposition~7]{CorPou} showed that the map $ \PF{p,q} \rightarrow \Rec{K_{p,q}^0}, p \mapsto c(p) := \dfunc - p$ is a bijection. This immediately yields the following.

\begin{proposition}\label{pro:GPF_PF_Tri}
We have $\PF{p,q} = \GPF{K_{p,q}^0}$.
\end{proposition}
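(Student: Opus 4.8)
The plan is to observe that both the bijection of Cori and Poulalhon quoted just above and the Postnikov--Shapiro bijection of Theorem~\ref{thm:bij_rec_GPF} are realised by the very same map, namely $f \mapsto \dfunc - f$, which is an involution on the space of integer-valued functions on $\V = P \cup Q$. Since the two cited statements identify $\PF{p,q}$ and $\GPF{K_{p,q}^0}$ with a common intermediate object, $\Rec{K_{p,q}^0}$, through this single involution, equality of the two sets should follow with essentially no computation.

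Concretely, I would argue as follows. Write $\phi$ for the map $f \mapsto \dfunc - f$. First, the cited result of Cori and Poulalhon says that $\phi$ restricts to a bijection $\PF{p,q} \to \Rec{K_{p,q}^0}$; in particular $\phi\!\left(\PF{p,q}\right) = \Rec{K_{p,q}^0}$. Second, Theorem~\ref{thm:bij_rec_GPF} says that $\phi$ restricts to a bijection $\Rec{K_{p,q}^0} \to \GPF{K_{p,q}^0}$, so that $\phi\!\left(\Rec{K_{p,q}^0}\right) = \GPF{K_{p,q}^0}$. Composing these two identities gives $\phi\!\left(\phi\!\left(\PF{p,q}\right)\right) = \GPF{K_{p,q}^0}$. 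Finally, because $\phi$ is an involution, the left-hand side equals $\PF{p,q}$, and we conclude $\PF{p,q} = \GPF{K_{p,q}^0}$.

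The only point requiring care — and the step I would regard as the real content, rather than a genuine obstacle — is checking that the two bijections are stated on a common domain with matching conventions, since the combinatorial core of the argument is otherwise trivial. Both $\PF{p,q}$ (via Definition~\ref{def:pq_ab}) and $\GPF{K_{p,q}^0}$ (via Definition~\ref{def:GPF}) are sets of functions $\V \to \N$, and I would verify that the degree function $\dfunc$ agrees in both settings: a vertex of $P$ has degree $q+1$ and a vertex of $Q$ has degree $p+1$ in $K_{p,q}^0$, which is exactly what makes the ranges $[q+1]^p$ and $[p+1]^q$ of Definition~\ref{def:pq_ab} consistent with the stability bound $c(v) < \dfunc(v)$ under $p = \dfunc - c$. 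This is also where the ``subtract $1$'' convention flagged after Definition~\ref{def:pq_ab} and in Remark~\ref{rem:GPF_0_1} is absorbed, ensuring that the Cori--Poulalhon map and the map of Theorem~\ref{thm:bij_rec_GPF} literally coincide. Once this alignment is confirmed, the involution argument above yields the equality at once.
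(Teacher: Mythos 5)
Your proposal is correct and is essentially the paper's own argument: the paper quotes the Cori--Poulalhon bijection $p \mapsto \dfunc - p$ from $\PF{p,q}$ to $\Rec{K_{p,q}^0}$ and states that, combined with Theorem~\ref{thm:bij_rec_GPF}, the equality follows immediately, which is exactly your observation that both bijections are the same involution $\phi = \dfunc - (\cdot)$, so $\PF{p,q} = \phi\bigl(\phi\bigl(\PF{p,q}\bigr)\bigr) = \phi\bigl(\Rec{K_{p,q}^0}\bigr) = \GPF{K_{p,q}^0}$. Your extra care about the degree conventions and the ``subtract $1$'' normalisation is a sensible sanity check but adds nothing beyond what the paper already absorbs into its definitions.
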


Armon \emph{et al.}~\cite{ABHKMMY} defined a notion of primeness for $(p,q)$-parking functions. Similarly to Definition~\ref{def:pq_ab}, we choose to define this notion using an equivalent characterisation in terms of lattice paths (see~\cite[Proposition~4.2]{ABHKMMY}).

\begin{definition}\label{def:prime_pq_ab} 
Let $p = (p^p, p^q) \in \PF{p,q}$ be a $(p,q)$-parking function, and define $\mathbf{a} := \inc{p^p} - \func{1}$ and $\mathbf{b} := \inc{p^q} - \func{1}$ be the corresponding sequences defining the lattice paths $\LL^{\perp}_{\mathbf{a}}$ and $\LL_{\mathbf{b}}$ respectively. We say that $p$ is \emph{prime} if $\LL^{\perp}_{\mathbf{a}}$ and $\LL_{\mathbf{b}}$ only intersect each other at their end points $(0,0)$ and $(p,q)$. We let $\PPF{p,q}$ denote the set of prime $(p,q)$-parking functions.
\end{definition}

We are now equipped to state the main result of this section, which shows that the notion of prime $G$-parking functions introduced in this paper is a generalisation of that introduced in~\cite{ABHKMMY}.

\begin{theorem}\label{thm:tripartite_pqPF}
We have $\PPF{p,q} = \PGPF{K_{p,q}^0}$. In particular, we have $\left\vert \PGPF{K_{p,q}^0} \right\vert = \left\vert \PPF{p,q} \right\vert = p^q (q-1)^{p-1} + q^p (p-1)^{q-1} - (p+q-1)(p-1)^{q-1}(q-1)^{p-1}$.
\end{theorem}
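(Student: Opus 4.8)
The plan is to prove the set equality $\PPF{p,q} = \PGPF{K_{p,q}^0}$ by showing that a $(p,q)$-parking function fails to be prime in the graph-theoretic sense of Definition~\ref{def:prime_GPF} if and only if it fails to be prime in the lattice-path sense of Definition~\ref{def:prime_pq_ab}; the enumeration then follows. Throughout I identify $\PF{p,q}$ with $\GPF{K_{p,q}^0}$ via Proposition~\ref{pro:GPF_PF_Tri}, and I use the simplified decomposability criterion of Proposition~\ref{pro:p^B_cond}. The first step is to record the graph geometry: for a subset $A \subseteq \V$ with $p_1 := \vert A \cap P \vert$ and $q_1 := \vert A \cap Q \vert$, every vertex $v \in P$ has $\dgr{A}{v} = q_1$ and every vertex $v \in Q$ has $\dgr{A}{v} = p_1$, since in $K_{p,q}^0$ a $P$-vertex is adjacent precisely to all of $Q$ and the sink (and symmetrically for $Q$). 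Writing $\mathbf{a} = \inc{p^p} - \func{1}$ and $\mathbf{b} = \inc{p^q} - \func{1}$ as in Definition~\ref{def:pq_ab}, the crux of the argument is the claim that $p$ is decomposable with respect to an ordered partition $(A,B)$ with $\vert A \cap P \vert = p_1$ and $\vert A \cap Q \vert = q_1$ if and only if the lattice paths $\LL^{\perp}_{\mathbf{a}}$ and $\LL_{\mathbf{b}}$ both pass through the interior lattice point $(p_1, q_1)$.

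For the forward implication I would start from a decomposition $(A,B)$. Proposition~\ref{pro:p^B_cond} gives $p^A \in \GPF{G^A}$ and $p^B(v) > 0$ for all $v \in B$, where $G^A \cong K_{p_1,q_1}^0$. By Remark~\ref{rem:GPF_upper_bound} the $p_1$ preferences of the $P$-vertices in $A$ are at most $\dgr{A \cup \{s\}}{v} = q_1 + 1$, while the positivity of $p^B$ on the $p - p_1$ $P$-vertices of $B$ forces their preferences to be at least $q_1 + 1$ (using $\dgr{A}{v} = q_1$). Hence at least $p_1$ entries of $p^p$ are $\leq q_1 + 1$ and at least $p - p_1$ are $\geq q_1 + 1$, which says exactly $a_{p_1} \leq q_1 \leq a_{p_1 + 1}$, i.e.\ $\LL^{\perp}_{\mathbf{a}}$ passes through $(p_1, q_1)$; the symmetric argument on $Q$ gives $b_{q_1} \leq p_1 \leq b_{q_1+1}$, so $\LL_{\mathbf{b}}$ passes through $(p_1, q_1)$ as well. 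Since $A, B \neq \emptyset$ this common point is not an endpoint, so $p$ is not prime in the sense of Definition~\ref{def:prime_pq_ab}.

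For the converse I would, given a common interior point $(p_1, q_1)$, build a concrete decomposition: let $A$ consist of the $p_1$ $P$-vertices of smallest preference together with the $q_1$ $Q$-vertices of smallest preference, and let $B$ be the complement. Because both paths reach $(p_1, q_1)$, the portions inside the rectangle $[0,p_1] \times [0,q_1]$ are paths from $(0,0)$ to $(p_1,q_1)$ that inherit the weakly-above property of the full paths, so $p^A \in \PF{p_1,q_1} = \GPF{G^A}$. For $B$, the fact that the $(p_1+1)$-st $E$-step of $\LL^{\perp}_{\mathbf{a}}$ sits at height $\geq q_1$ (the path can only rise after $(p_1,q_1)$) shows that every $P$-vertex of $B$ has preference $\geq q_1 + 1 = \dgr{A}{v} + 1$, so $p^B(v) > 0$; the analogous statement holds on $Q$. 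Proposition~\ref{pro:p^B_cond} then yields that $p$ is decomposable, hence not prime in the graph sense. Combining the two implications gives $\PPF{p,q} = \PGPF{K_{p,q}^0}$.

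I expect the main obstacle to be the bookkeeping around boundary ties, namely $P$-vertices with preference exactly $q_1 + 1$ (and $Q$-vertices with preference $p_1 + 1$), which may be placed on either side of the partition; this is precisely the source of the non-uniqueness of prime decompositions noted in Remark~\ref{rem:decomp}, and the proof must only assert existence of \emph{some} valid decomposition rather than a canonical one. Finally, once the set equality is established, the stated formula for $\vert \PGPF{K_{p,q}^0} \vert$ is immediate from the enumeration of $\PPF{p,q}$ obtained in~\cite{ABHKMMY} (alternatively, it can be recovered by a direct inclusion--exclusion argument on the pairs of non-crossing lattice paths).
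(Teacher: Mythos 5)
Your argument is correct, but it takes a genuinely different route from the paper's. The paper proves $\PPF{p,q} = \PGPF{K_{p,q}^0}$ by going through Corollary~\ref{cor:GPPF_charac} (itself a consequence of the main bijection with strongly recurrent configurations): it fixes $v \in V_M(p)$, tracks how the transformation $p \leadsto p^{v+}$ shifts the two lattice paths, and derives a contradiction with the weakly-above property at a putative interior intersection point (in both directions). You instead work directly from Definition~\ref{def:prime_GPF} together with Proposition~\ref{pro:p^B_cond}, and establish the cleaner structural statement that $p$ is decomposable with respect to $(A,B)$ if and only if both paths $\LL^{\perp}_{\mathbf{a}}$ and $\LL_{\mathbf{b}}$ pass through the lattice point $\left( \vert A \cap P \vert, \vert A \cap Q \vert \right)$; your degree computations ($\dgr{A}{v} = q_1$ for $v \in P$, etc.), the resulting inequalities $a_{p_1} \leq q_1 \leq a_{p_1+1}$ and $b_{q_1} \leq p_1 \leq b_{q_1+1}$, and the reverse construction of $(A,B)$ from an interior intersection point all check out, including your handling of ties at preference $q_1+1$. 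What your approach buys is a more self-contained and more informative proof: it bypasses the strong-recurrence machinery entirely and exhibits an explicit correspondence between decompositions and interior intersection points, which also speaks to the uniqueness question raised in Problem~\ref{pb:unique_prime_decomp}. What the paper's approach buys is uniformity: the same $p^{v+}$-based technique is reused for the complete bipartite and complete split families in the subsequent sections. The only points you should tidy up in a full write-up are the degenerate blocks (e.g.\ $q_1 = 0$ or $p_1 = 0$, where $G^A$ is a star and ``$\PF{p_1,q_1}$'' should be interpreted directly via Definition~\ref{def:GPF}) and a one-line remark that two monotone staircase paths meeting off their endpoints necessarily share an interior \emph{lattice} point; both are routine. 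The enumeration step, deferred to the count of $\PPF{p,q}$ in~\cite{ABHKMMY}, matches the paper.
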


\begin{proof}
Let $p = (p^p; p^q) \in \PPF{p,q}$ be a prime $(p,q)$-parking function, with associated lattice paths $\LL^{\perp}_{\mathbf{a}}$ and $\LL_{\mathbf{b}}$. By Proposition~\ref{pro:GPF_PF_Tri}, we have $p \in \GPF{K_{p,q}^0}$, since $p$ is also a $(p,q)$-parking function. Moreover, by construction, we have $V_M(p) = \{i \in [p]; \, p_i^p = 1\} \cup \{j \in [q]; \, p_j^q = 1\}$. We wish to apply Corollary~\ref{cor:GPPF_charac}. For this, fix $i \in [p]$ such that $p_i^p = 1$ (the case $j \in [q]$ is analogous). By symmetry, and to simplify notation, we assume that $i=1$. We wish to show that $p^{1+} \in \GPF{K_{p,q}^0}$. 

By construction, we have $p^{1+} = p + \func{1} - \1{1}$. We now consider the effects of the transformation $p \leadsto p^{1+}$ on the lattice paths. First, note that since $p$ is prime, the lattice paths $\LL^{\perp}_{\mathbf{a}}$ and $\LL_{\mathbf{b}}$ only intersect at their start and end points. In particular, $\LL^{\perp}_{\mathbf{a}}$ starts with an $E$ step and ends with an $N$ step, while $\LL_{\mathbf{b}}$ starts with an $N$ step and ends with an $E$ step. We write $\LL^{\perp}_{\mathbf{a}} = E L_a N$ and $\LL_{\mathbf{b}} = N L_b E$. Now, in the transformation $p \leadsto p^{1+}$, other than the initial $1$ entry in the $\mathbf{a}$ vector, all other entries in $\mathbf{a}$ and $\mathbf{b}$ increase by one. This transforms $\LL^{\perp}_{\mathbf{a}}$ into $\LL^{\perp}_{\mathbf{a'}} := ENL_a$ and $\LL_{\mathbf{b}}$ into $\LL_{\mathbf{b'}} := ENL_b$, as shown on Figure~\ref{fig:path_transform}. We wish to show that $\LL_{\mathbf{b'}}$ lies weakly above $\LL^{\perp}_{\mathbf{a'}}$.

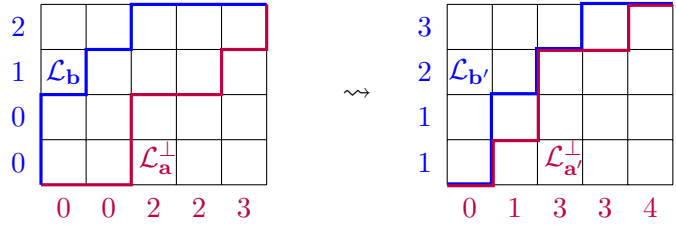
\begin{figure}[ht]
 \centering
 \begin{tikzpicture}[scale=0.6]
 \draw (0,0) grid (5,4);
 %upper
 \draw [blue, very thick] (0,0)--(0,2)--(1,2)--(1,3)--(2,3)--(2,4)--(5,4);
 \node [blue] at (0.5,2.5) {$\mathcal{L}_{\mathbf{b}}$};
 \node [blue] at (-0.5,0.5){0};
 \node [blue] at (-0.5,1.5){0};
 \node [blue] at (-0.5,2.5){1};
 \node [blue] at (-0.5,3.5){2};
 %lower
 \draw [purple, very thick] (0,0)--(2,0)--(2,2)--(4,2)--(4,3)--(5,3)--(5,4);
 \node [purple] at (2.6,0.6) {$\mathcal{L}^{\perp}_{\mathbf{a}}$};
 \node [purple] at (0.5,-0.5){0};
 \node [purple] at (1.5,-0.5){0};
 \node [purple] at (2.5,-0.5){2};
 \node [purple] at (3.5,-0.5){2}; 
 \node [purple] at (4.5,-0.5){3}; 
 
 \node at (7,2) {$\leadsto$};
 
 \begin{scope}[shift={(9,0)}]
 \draw (0,0) grid (5,4);
 %upper
 \begin{scope}[shift={(-0.02,0.02)}]
 \draw [blue, very thick] (0.02,0)--(1,0)--(1,2)--(2,2)--(2,3)--(3,3)--(3,4)--(5.02,4);
 \end{scope}
 \node [blue] at (0.5,2.5) {$\mathcal{L}_{\mathbf{b'}}$};
 \node [blue] at (-0.5,0.5){1};
 \node [blue] at (-0.5,1.5){1};
 \node [blue] at (-0.5,2.5){2};
 \node [blue] at (-0.5,3.5){3};
 %lower
 \begin{scope}[shift={(0.02,-0.02)}]
 \draw [purple, very thick] (-0.02,0)--(1,0)--(1,1)--(2,1)--(2,3)--(4,3)--(4,4)--(4.98,4);
 \end{scope}
 \node [purple] at (2.6,0.6) {$\mathcal{L}^{\perp}_{\mathbf{a'}}$};
 \node [purple] at (0.5,-0.5){0};
 \node [purple] at (1.5,-0.5){1};
 \node [purple] at (2.5,-0.5){3};
 \node [purple] at (3.5,-0.5){3}; 
 \node [purple] at (4.5,-0.5){4}; 
 \end{scope}
 \end{tikzpicture}
  \caption{Illustrating the effects of the transformation $p \leadsto p^{v+}$ on the associated lattice paths, where $v \in V_M(p) \cap P$.\label{fig:path_transform}}
\end{figure}

Seeking contradiction, suppose there exists some $i$ such that if $y_a$ and $y_b$ denote the $y$-coordinates of the $i$-th $E$ steps in $\LL^{\perp}_{\mathbf{a'}}$ and $\LL_{\mathbf{b'}}$ respectively, we have $y_a > y_b$. Note that necessarily we must have $i > 1$ since the first $E$-step in $\LL^{\perp}_{\mathbf{a'}}$ has $y$-coordinate equal to zero. Now by construction, the $i$-th $E$ step in $\LL^{\perp}_{\mathbf{a}}$ has $y$-coordinate $y_a - 1$, while the $(i-1)$-th $E$ step in $\LL_{\mathbf{b}}$ has $y$-coordinate $y_b \leq y_a - 1$. Since $\LL_{\mathbf{b}}$ lies weakly above $\LL^{\perp}_{\mathbf{a}}$ (because $p$ is a $(p,q)$-parking function), this implies that at the $x$-coordinate $i-1$, the paths $\LL^{\perp}_{\mathbf{a}}$ and $\LL_{\mathbf{b}}$ must intersect, which contradicts the primeness of $p$.

Conversely, let $p = (p^p; p^q) \in \PGPF{K_{p,q}^0}$ be a prime $K_{p,q}^0$-parking function. As previously, in particular $p$ is a $K_{p,q}^0$-parking function, and therefore a $(p,q)$-parking function by Proposition~\ref{pro:GPF_PF_Tri}. Let $\LL^{\perp}_{\mathbf{a}}$ and $\LL_{\mathbf{b}}$ be the lattice paths associated with $p$, with $\LL_{\mathbf{b}}$ therefore lying weakly above $\LL^{\perp}_{\mathbf{a}}$. We wish to show that these two paths do not intersect other than at their end points. To simplify notation, we assume (by symmetry) that $p^p$ and $p^q$ are non-decreasing.

First, note that we must have $p^p_1 = p^q_1 = 1$. Indeed, suppose that $p^p_1 > 1$. Since $V_M(p)$ is non-empty, we must then have $p^q_1 = 1$. But since $p \in \PGPF{K_{p,q}^0}$ is prime, $p' := p^{v_1^q + }$ is a $K_{p,q}^0$-parking function. However, we have $p'^p_i = p^p_i + 1 > 2$ for all $i \in [p]$ and $p'^q_j = p^q_j + 1 > 1$ for all $j \in [q] \setminus \{1\}$. Taking $S = V \setminus \{0, v^q_1\}$ then yields a contradiction with Definition~\ref{def:GPF}. The proof that $p^q_1 = 1$ is analogous. Moreover, by Remark~\ref{rem:GPF_upper_bound} and Corollary~\ref{cor:GPPF_charac}, this further implies that $p^p_p \leq q$ and $p^q_q \leq p$ (we must be able to add $1$ to these values and still be less than or equal to the degree of the corresponding vertex). As such, the lattice path $\LL^{\perp}_{\mathbf{a}}$ starts with an $E$ step and ends with an $N$ step, while $\LL_{\mathbf{b}}$ starts with an $N$ step and ends with an $E$ step.

Seeking contradiction, suppose that the two paths intersect at some point $(i,j)$ with $0 < i < p$ and $0 < j < q$ (these bounds follow from the observations in the previous paragraph), and let $(i,j)$ denote the last such point (starting from $(0,0)$). Since $\LL_{\mathbf{b}}$ lies weakly above $\LL^{\perp}_{\mathbf{a}}$, this implies that the next step in $\LL^{\perp}_{\mathbf{a}}$ must be an $E$ step, i.e. the $(i+1)$-th $E$ step in $\LL^{\perp}_{\mathbf{a}}$ has $y$-coordinate $j$. Now as previously, consider $p' := p^{v^p_1 +}$ with associated lattice paths $\LL^{\perp}_{\mathbf{a'}}$ and $\LL_{\mathbf{b'}}$. By Corollary~\ref{cor:GPPF_charac} and Proposition~\ref{pro:GPF_PF_Tri}, $p'$ is a $(p,q)$-parking function, so $\LL_{\mathbf{b'}}$ lies weakly above $\LL^{\perp}_{\mathbf{a'}}$. However, by preceding remarks, the $(i+1)$-th $E$ step in $\LL^{\perp}_{\mathbf{a'}}$ has $y$-coordinate $j+1$, while the $y$-coordinate of the $(i+1)$-th $E$ step in $\LL_{\mathbf{b'}}$ is the same as that of the $i$-th $E$ step in $\LL_{\mathbf{b}}$, and since $\LL_{\mathbf{b}}$ hits the point $(i,j)$, this value must be at most $j < j+1$. This gives the desired contradiction, and completes the proof that $\PPF{p,q} = \PGPF{K_{p,q}^0}$. The enumeration is given by~\cite[Theorem~4.9]{ABHKMMY}.
\end{proof}

\subsection{The complete bipartite case}\label{subsec:bipartite}

In this section, we consider the complete bipartite graph $K_{p*,q}$. This graph consists of two independent sets $P = \{v_0^p, v_1^p, \dots, v_p^p\}$ (including the sink $v_0^p$) and $Q = \{v_1^q, \dots, v_q^q\}$, and edges between each pair $(v_a^p, v_b^q) \in P \times Q$ (see Figure~\ref{fig:bipartite}). A parking function $p \in \GPF{K_{p*,q}}$ is called \emph{increasing} if $p\left(v_1^p \right) \leq \dots \leq p\left(v_p^p\right)$ and $p\left(v_1^q \right) \leq \dots \leq p\left(v_q^q\right)$. We denote by $\incGPF{K_{p*,q}}$, resp.\ $\incPGPF{K_{p*,q}}$ the set of increasing, resp.\ increasing prime, $K_{p*,q}$-parking functions.

\begin{figure}[ht]
 \centering
  \begin{tikzpicture}[scale=0.35]
 \node (0) [draw, rectangle, fill=black] at (1,0) {};
 \node [MyNode] (1) at (3,0) {};
 \node [MyNode] (2) at (5,0) {};
 \node [MyNode] (3) at (7,0) {};
 
 \node [MyNode] (4) at (3,-3) {};
 \node [MyNode] (5) at (5,-3) {};
     
 \foreach \xx in {0,1,2,3}
   \foreach \yy in {4,5}
     \draw [thick] (\xx)--(\yy);

 % comment
 \node at (1,1.2) {$v_0^p$};
 \node at (3,1.2) {$v_1^p$};
 \node at (5,1.2) {$v_2^p$};
 \node at (7,1.2) {$v_3^p$};
 \node at (3,-4.2) {$v_1^q$};
 \node at (5,-4.2) {$v_2^q$};
  \end{tikzpicture}
  \caption{The complete bipartite graph $K_{3*,2}$.\label{fig:bipartite}}
\end{figure}
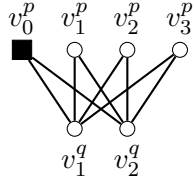

\begin{theorem}\label{thm:bipartite_bij}
There is a bijection from $\incPGPF{K_{p*,q}}$ to $ \incGPF{K_{(p-1)*,q}}$. In particular, we have $\left\vert \incPGPF{K_{p*,q}} \right\vert = \left\vert \incGPF{K_{(p-1)*,q}} \right\vert = \frac{1}{p+q-1} \binom{p+q-1}{p} \binom{p+q-1}{p-1}$.
\end{theorem}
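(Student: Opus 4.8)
The plan is to work with the explicit subset form of the parking condition for \emph{sorted} functions and to build an elementary ``delete/insert a smallest $P$-vertex'' bijection, invoking primeness through Corollary~\ref{cor:GPPF_charac} exactly once.

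First I would record the combinatorial data. For an increasing $p=(p^p;p^q)\in\incGPF{K_{p*,q}}$ the non-sink degrees are $\dgr{}{v_i^p}=q$ and $\dgr{}{v_j^q}=p+1$, and $\mult{v_i^p}{s}=0$, $\mult{v_j^q}{s}=1$; hence $V_M(p)=\{v_j^q;\ p^q_j=1\}\subseteq Q$. Taking $S$ in Definition~\ref{def:GPF} to be the $a$ largest $P$-entries together with the $b$ largest $Q$-entries (the worst case for a sorted function), and computing $\dgr{S^c}{v_i^p}=q-b$ and $\dgr{S^c}{v_j^q}=p+1-a$, gives the clean characterisation: $p$ is a parking function iff for all $0\le a\le p$, $0\le b\le q$ with $a+b\ge1$, either ($a\ge1$ and $p^p_{p-a+1}\le q-b$) or ($b\ge1$ and $p^q_{q-b+1}\le p+1-a$). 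I would then prove two structural facts about a \emph{prime} increasing $p$: that $p^p_1=1$ and that $p^q_q\le p$. Both follow from Corollary~\ref{cor:GPPF_charac} applied to $v_1^q\in V_M(p)$: the function $p^{v_1^q+}$ adds $1$ to every $Q$-entry except the first, so Remark~\ref{rem:GPF_upper_bound} forces $p^q_q+1\le p+1$; and testing $p^{v_1^q+}$ on $S=\V\setminus\{v_1^q\}$ (where every candidate vertex has $\dgr{S^c}{\cdot}=1$) forces $p^p_1=1$. Since all vertices of $V_M(p)$ carry value $1$, the functions $p^{v+}$ share the same multiset on each part, so it suffices to test $v=v_1^q$ throughout.

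Next I would define the two maps. Let $\Phi$ delete the vertex $v_1^p$, i.e.\ $\Phi(p)=(p^p_2,\dots,p^p_p;\,p^q)$, a candidate element of $\incGPF{K_{(p-1)*,q}}$; and let $\Psi$ prepend a $1$ to the $P$-part, $\Psi(\tilde p)=(1,\tilde p^p_1,\dots,\tilde p^p_{p-1};\,\tilde p^q)$. The point of deleting a $P$-vertex is that the $Q$-degrees drop from $p+1$ to $p$, which matches exactly the primeness bound $p^q_q\le p$. To show $\Phi(p)$ is a $K_{(p-1)*,q}$-parking function I would verify its subset characterisation (with $a\le p-1$ and the $Q$-bound now $p-a$) by a case split on $(a,b)$: the boundary cases $a=0$, $b=0$, $b=q$ are immediate from $p^p\le q$, from $p^q\le p$ (primeness), and from $p^q_1=1$, while in the interior range $1\le a\le p-1$, $1\le b\le q-1$ a failure would, via the parking condition for $p$, force $p^q_{q-b+1}=p+1-a$ and then make $p^{v_1^q+}$ fail its own condition at the \emph{same} $(a,b)$ --- contradicting primeness. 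Symmetrically, to show $\Psi(\tilde p)$ is prime I would check that both $\Psi(\tilde p)$ and $\big(\Psi(\tilde p)\big)^{v_1^q+}$ are parking functions, each interior case reducing precisely to the parking condition for $\tilde p$ on $K_{(p-1)*,q}$.

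Finally, mutual invertibility is immediate: $\Psi$ manifestly undoes $\Phi$, and since $p^p_1=1$ for every prime $p$, $\Phi$ removes a value-$1$ entry and $\Psi$ restores it, so $\Psi\circ\Phi=\mathrm{id}$. This establishes the bijection $\incPGPF{K_{p*,q}}\cong\incGPF{K_{(p-1)*,q}}$; the enumeration then follows from the known count of sorted recurrent configurations (equivalently sorted $G$-parking functions) of complete bipartite graphs with the sink in one part, namely the Narayana number $\frac{1}{p+q-1}\binom{p+q-1}{p}\binom{p+q-1}{p-1}$ \cite{DLB, SeligCompBipart}. The main obstacle I anticipate is the interior case of the two parking-condition verifications, where the $Q$-degree shift by one between $K_{p*,q}$ and $K_{(p-1)*,q}$ must be absorbed exactly by the grain added in $p^{v_1^q+}$; getting this off-by-one bookkeeping right, and cleanly separating the boundary cases $a\in\{0,p\}$ and $b\in\{0,q\}$, is where the real work lies.
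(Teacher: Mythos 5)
Your proposal is correct and follows essentially the same route as the paper: both establish that a prime increasing $K_{p*,q}$-parking function must have $p(v_1^p)=1$ (via exactly the same test of $p^{v_1^q+}$ on $S=\V\setminus\{v_1^q\}$), both take deletion of that vertex as the bijection with insertion of a $1$ as its inverse, and both verify the parking condition after deletion/insertion by invoking Corollary~\ref{cor:GPPF_charac} through $p^{v_1^q+}$, citing~\cite{DLB} for the count. The only difference is bookkeeping — you recast the subset condition as sorted-entry inequalities indexed by $(a,b)$ where the paper argues on arbitrary subsets $S=S_P\cup S_Q$ directly — and the one small slip (the justifications for the boundary cases $a=0$ and $b=0$ are listed in swapped order: $a=0$ needs $p^q\le p$ and $b=0$ needs $p^p\le q$) is immaterial.
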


\begin{proof}
We first claim that if $p \in \PGPF{K_{p*,q}}$ is a prime parking function, then there exists $v \in P$ such that $p(v) = 1$. For this, first choose some $u \in Q$ such that $p(u) = 1$ (such a vertex always exists by taking $S = \V = V \setminus \{v_0^p\}$ in Definition~\ref{def:GPF}). Now by definition, we have $u \in V_M(p)$ so that $p^{u+} \in \GPF{K_{p*,q}}$ by Corollary~\ref{cor:GPPF_charac}. By definition, we have $p^{u+} = p + \sum\limits_{w \in Q \setminus \{u\}} \1{w}$, and in particular $p^{u+}(w) > 1$ for all $w \in Q \setminus \{u\}$. We then take $S = V \setminus \{v_0^p, u\}$ in Definition~\ref{def:GPF}: there exists $v \in S$ such that $p^{u+}(v) \leq \dgr{\{v_0^p, u\}}{v}$. However, by preceding remarks we cannot have $v \in Q$, since otherwise we would have $1 < p^{u+}(v) \leq \dgr{\{v_0^p, u\}}{v} = 1$, a contradiction. Therefore there exists $v \in P$ such that $1 \leq p(v) = p^{u+}(v) \leq \dgr{\{v_0^p, u\}}{v} = 1$, and the claim is proved.

In particular, if $p \in \incPGPF{K_{p*,q}}$ is an increasing prime parking function, we have $p(v_1^p) = 1$. To simplify notation and avoid relabelling vertices, we consider the graph $K_{(p-1)*,q}$ to simply be the graph $K_{p*,q}$ with $v_1^p$ deleted. As in the proof of Lemma~\ref{lem:sym_PPFtoPF} we will write $V$, resp.\ $V'$, for the set of vertices of $K_{p*,q}$, resp.\ $K_{(p-1)*,q}$, using the $\tilde{\square}$ notation to denote non-sink vertices. We then claim that the deletion map $\Del{v_1^p}$ is a bijection from $\incPGPF{K_{p*,q}}$ to $ \incGPF{K_{(p-1)*,q}}$.

First, let us show that $\Del{v_1^p}\left(\incPGPF{K_{p*,q}}\right) \subseteq \incGPF{K_{(p-1)*,q}}$. Note that we cannot apply Lemma~\ref{lem:sym_PPFtoPF} in this case since $v_1^p \notin V_M(p)$ (we have $\mult{v_1^p}{v_0^p} = 0$).  
Define $p' := \Del{v_1^p}(p)$, and let $S \subseteq \V'$ be non-empty. We write $S = S_P \cup S_Q$ with $S_P \subseteq \{v_2^p, \dots, v_p^p \}$ and $S_Q \subseteq \{v_1^q, \dots, v_q^q\}$. 
We wish to show that there exists $v \in S$ such that $p'(v) \leq \dgr{V' \setminus S}{v}$. As previously, since $p$ is a parking function, we must have $p(v_1^q) = p'(v_1^q) = 1$ (using the fact that $p$ is increasing). In particular, since the sink vertex is always in $V' \setminus S$, if $S_Q$ contains $v_1^q$, we may simply choose $v = v_1^q$.

We therefore assume that $v_1^q \notin S_Q$. Now since $p$ is prime, we have $p^{v_1^q +} \in \incGPF{K_{p*,q}}$ by Corollary~\ref{cor:GPPF_charac}. 
Note that we have $p^{v_1^q +} = p + \sum \limits_{j=2}^{q} \1{v_j^q}$. Since $S \subseteq \V$, by Definition~\ref{def:GPF}, there exists $v \in S$ such that $p^{v_1^q +}(v) \leq \dgr{V \setminus S}{v}$. 
Now if $v \in S_P$, we have $p'(v) = p(v) = p^{v_1^q +}(v) \leq \dgr{V \setminus S}{v} = \dgr{V' \setminus S}{v}$, since $V' = V \setminus \{v_1^p\}$ but $v$ and $v_1^p$ do not share an edge when $v \in S_P$. 
Similarly, if $v \in S_Q$, we have $p'(v) = p(v) = p^{v_1^q +}(v) - 1 \leq \dgr{V \setminus S}{v} - 1 = \dgr{V' \setminus S}{v}$, since $v$ and $v_1^p$ share an edge in this case. In both cases we get $p'(v) \leq \dgr{V' \setminus S}{v}$, showing that $p'$ is indeed a $K_{(p-1)*,q}$-parking function, as desired.

As such, the map $\Del{v_1^p} : \incPGPF{K_{p*,q}} \rightarrow \incGPF{K_{(p-1)*,q}}, p \mapsto p'$ is well-defined, and by construction is clearly injective. 
We show that it is surjective, i.e. that inserting a vertex $v$ with $p(v)=1$ into the $P'$-component of a $K_{(p-1)*,q}$-parking function yields a prime $K_{p*,q}$-parking function. Let $p' \in \incGPF{K_{(p-1)*,q}}$ and $p$ be the increasing function on $\V\left(K_{p*,q}\right)$ such that $p' = \Del{v_1^p}(p)$. Clearly, since $p'$ is a $K_{(p-1)*,q}$-parking function, $p$ is a $K_{p*,q}$-parking function. To show that $p$ is prime, it is sufficient to show that $p^{v_1^q + }$ is a  $K_{p*,q}$-parking function (by symmetry and Corollary~\ref{cor:GPPF_charac}). Fix $S = S_P \cup S_Q \subseteq \V\left(K_{p*,q}\right)$. As above, we may assume that $v_1^q \notin S_Q$. As such, if $v_1^p \in S_P$, we have $p^{v_1^q +} \left( v_1^p \right) = p \left( v_1^p \right) = 1 = \mult{v_1^p}{v_1^q} \leq \dgr{V \setminus S}{v_1^p}$ as desired, so we may assume $v_1^p \notin S_P$. The proof then follows as above by exploiting the fact that $p'$ is a $K_{(p-1)*,q}$-parking function and considering $S$ as a subset of $\V\left(K_{(p-1)*,q}\right)$.

The enumerative result follows from~\cite[Corollary 3.10]{DLB} and the bijection from (increasing) $K_{(p-1)*,q}$-parking functions to recurrent configurations of the ASM on $K_{(p-1)*,q}$ (Theorem~\ref{thm:bij_rec_GPF} in this paper).
\end{proof}

\subsection{The complete split case}\label{subsec:split}

In this section, we consider the case where $G$ is the complete split graph $S_{m*,n}$. This graph consists of a clique with vertices $C = \left\{v_0^c, \dots, v_m^c \right\}$ (including the sink $v_0^c$), an independent set $I = \left\{ v_1^i, \dots, v_n^i \right\}$, and edges between each pair $(v_a^c, c_b^i) \in C \times I$ (see Figure~\ref{fig:split}). As in the complete bipartite case, we are interested in \emph{increasing} (prime) $S_{m*,n}$-parking functions, with analogous definitions and notation.

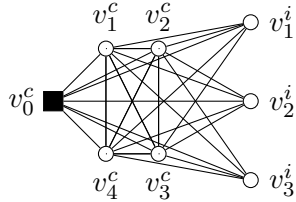
\begin{figure}[ht]
 \centering
  \begin{tikzpicture}[scale=0.35]
 \node (0) [draw, rectangle, fill=black] at (0,0) {};
 \node [MyNode] (1) at (2,2) {};
 \node [MyNode] (2) at (4,2) {};
 \node [MyNode] (3) at (2,-2) {};
 \node [MyNode] (4) at (4,-2) {};
 
 \node [MyNode] (5) at (7.5,-3) {};
 \node [MyNode] (6) at (7.5,0) {};
 \node [MyNode] (7) at (7.5,3) {};\

 \foreach \xx in {0,1,2,3,4}
   \foreach \yy in {1,...,7}
     \draw (\xx)--(\yy);

 % comment
 \node at (-1.2,0) {$v_0^c$};
 \node at (2,3.2) {$v_1^c$};
 \node at (4,3.2) {$v_2^c$};
 \node at (2,-3.2) {$v_4^c$};
 \node at (4,-3.2) {$v_3^c$};
 \node at (8.7,3) {$v_1^i$};
 \node at (8.7,0) {$v_2^i$};
 \node at (8.7,-3) {$v_3^i$};
  \end{tikzpicture}
  \caption{The complete split graph $S_{4*,3}$.\label{fig:split}}
\end{figure}

\begin{theorem}\label{thm:split_bij}
There is a bijection from $\incPGPF{S_{m*,n}}$ to $ \incGPF{S_{(m-1)*,n}}$. In particular, we have $\left\vert \incPGPF{S_{m*,n}} \right\vert = \left\vert \incGPF{S_{(m-1)*,n}} \right\vert = \frac{1}{m} \binom{2m-2}{m-1} \binom{2m+n-2}{n}$.
\end{theorem}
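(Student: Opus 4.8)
The plan is to follow the template of the complete bipartite case (Theorem~\ref{thm:bipartite_bij}): identify a clique vertex of value $1$, delete it, and show that the resulting deletion map $\Del{v_1^c}$ is a bijection from $\incPGPF{S_{m*,n}}$ to $\incGPF{S_{(m-1)*,n}}$. The key structural feature that simplifies matters here, compared with the bipartite setting, is that the vertex we delete lies in $V_M(p)$. Indeed, every non-sink clique vertex $v_j^c$ is adjacent to the sink $v_0^c$ through a clique edge, so $\mult{v_j^c}{v_0^c} = 1$, and the transposition $(v_1^c\, v_0^c)$ is a graph automorphism of $S_{m*,n}$. The hypotheses of Lemma~\ref{lem:sym_PPFtoPF} are therefore met (cf.\ Remark~\ref{rem:del_cond}), so I can invoke that lemma directly rather than re-deriving its conclusion by hand.

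For the forward direction, I would first show that every $p \in \PGPF{S_{m*,n}}$ has a clique vertex of value $1$. Taking $S = \V$ in Definition~\ref{def:GPF} produces a vertex of value at most $1$, hence of value exactly $1$. If no clique vertex had value $1$, this vertex would be some independent $u$; using primeness and Corollary~\ref{cor:GPPF_charac} gives $p^{u+} \in \GPF{S_{m*,n}}$, and taking $S = \V \setminus \{u\}$ (so $S^c = \{v_0^c, u\}$) forces a contradiction, since a clique vertex $v$ would need $p^{u+}(v) \le \dgr{\{v_0^c,u\}}{v} = 2$ while $p^{u+}(v) \ge 3$, and an independent vertex $v \neq u$ would need $p^{u+}(v) \le 1$ while $p^{u+}(v) \ge 2$. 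Thus for increasing $p$ we get $p(v_1^c) = 1$ and $v_1^c \in V_M(p)$, and Lemma~\ref{lem:sym_PPFtoPF} yields $\Del{v_1^c}(p) \in \GPF{S_{(m-1)*,n}}$, which remains increasing. Hence $\Del{v_1^c}$ maps $\incPGPF{S_{m*,n}}$ into $\incGPF{S_{(m-1)*,n}}$ and is injective.

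For surjectivity, given $p' \in \incGPF{S_{(m-1)*,n}}$ I would insert a clique vertex $v_1^c$ of value $1$ to form an increasing $p$ on $S_{m*,n}$ with $\Del{v_1^c}(p) = p'$. Checking $p \in \GPF{S_{m*,n}}$ is immediate: for $S \ni v_1^c$ take $v = v_1^c$ (as $v_0^c \in S^c$ is adjacent to $v_1^c$), and for $S \not\ni v_1^c$ apply the parking property of $p'$ and note that re-including $v_1^c$ in the complement only increases degrees. The substantive step is proving $p$ prime: by Corollary~\ref{cor:GPPF_charac} I must verify $p^{u+} \in \GPF{S_{m*,n}}$ for every $u \in V_M(p) = \{v : p(v) = 1\}$, and by the graph symmetries it suffices to treat one clique representative $u = v_1^c$ and, if some independent vertex has value $1$, one independent representative $u = v_1^i$. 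This is precisely where the argument diverges from Theorem~\ref{thm:bipartite_bij}, where $V_M(p)$ met only the independent side.

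The main obstacle is therefore this primeness verification, and in particular the clique representative $u = v_1^c$, which has no counterpart in the bipartite case. The resolution I would use is that the inserted vertex $v_1^c$ is adjacent to \emph{every} other vertex of $S_{m*,n}$. Concretely, for either representative $u$ and any non-empty $S$: if $u \in S$, then $v = u$ works since $p^{u+}(u) = 1$ and the sink $v_0^c \in S^c$ is adjacent to $u$; if $u \notin S$ and $v_1^c \in S$, then $v = v_1^c$ works since $v_0^c$ and $u$ both lie in $S^c$ and are adjacent to $v_1^c$, giving $\dgr{S^c}{v_1^c} \ge 2 = p^{u+}(v_1^c)$; and if $u \notin S$ and $v_1^c \in S^c$, I apply the parking property of $p'$ to find $v \in S$ with $p'(v) \le \dgr{S^c \setminus \{v_1^c\}}{v}$ and use $\mult{v}{v_1^c} = 1$ to absorb the increment, giving $p^{u+}(v) = p'(v) + 1 \le \dgr{S^c}{v}$. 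Once primeness is established, $\Del{v_1^c}$ and the insertion map are mutually inverse, giving the bijection. The cardinality then follows from the enumeration of recurrent configurations of the ASM on complete split graphs \cite{DDLB, Duk} via Theorem~\ref{thm:bij_rec_GPF}.
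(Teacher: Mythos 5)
Your proposal is correct and follows essentially the same route as the paper: the same preliminary claim that a prime $S_{m*,n}$-parking function has a clique vertex of value $1$, the same deletion map $\Del{v_1^c}$ justified by Lemma~\ref{lem:sym_PPFtoPF}, the same insertion argument for surjectivity via Corollary~\ref{cor:GPPF_charac}, and the same enumeration source. If anything, your explicit treatment of the sub-case $u \notin S$, $v_1^c \in S$ in the primeness verification is slightly more careful than the paper's, which folds it into the assumption $S \subseteq \V'$.
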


\begin{proof}
The bijection and its proof are very similar to that of Theorem~\ref{thm:bipartite_bij}, so we allow ourselves to be briefer here. We first claim that if $p \in \PGPF{S_{m*,n}}$ is a prime parking function, then there exists $v \in C$ such that $p(v) = 1$. Indeed, since $p$ is a parking function and the graph $S_{m*,n}$ is simple, there exists $w \in V$ such that $p(w) = 1$ (choose any $w \in V_M(p)$). If $w \in C$ then we are done. Otherwise, if $w \in I$, we consider the parking function $p^{w+}$ and set $S = \V \setminus \{w\}$. Then there exists $v \in S$ such that $p^{w+}(v) \leq \dgr{S^c}{v} = \dgr{\{v_0^c, w\}}{v}$. As in the proof of Theorem~\ref{thm:bipartite_bij}, we cannot have $v \in I$ since otherwise $p^{w+}(v) = p(v) + 1 \geq 2 > \dgr{\{v_0^c, w\}}{v} = 1$. Therefore, we have $v \in C$ and $1 \leq p(v) = p^{w+}(v) - 1 \leq \dgr{\{v_0^c, w\}}{v} -1 = 2-1 = 1$, so $p(v) = 1$ as desired.

We then consider the map $\Del{v_1^c}: \incPGPF{S_{m*,n}} \rightarrow \incGPF{S_{(m-1)*,n}}$ which consists in deleting the vertex $v_1^c$ (satisfying $p\left(v_1^c\right) = 1$). The map is well-defined by Lemma~\ref{lem:sym_PPFtoPF} (here the assumptions are satisfied), and clearly injective. To show surjectivity, consider a parking function $p' \in \GPF{S_{(m-1)*,n}}$ and let $p$ be the function on $S_{m*,n}$ obtained by inserting the additional vertex $v_1^c$ with $p\left(v_1^c\right) = 1$. We wish to show that $p$ is a prime parking function. That $p$ is a parking function follows from the fact that $p'$ is, and as in Theorem~\ref{thm:bipartite_bij} we show primeness through Corollary~\ref{cor:GPPF_charac}.

Let $v \in V_M(p)$, i.e.\ $p(v) = 1$ (here all vertices $w$ satisfy $\mult{w}{s} = 1$). We wish to show that $p^{v+} = p + \func{1} - \1{v}$ is a parking function. Let $S \subseteq \V$ be a non-empty subset of non-sink vertices of $S_{m*,n}$. If $v \in S$, then we have $p^{v+}(v) = p(v) = 1 = \mult{v}{v_0^c} \leq \dgr{V \setminus S}{v}$, and there is nothing more to show. We therefore assume that $v \notin S$, i.e. that $S \subseteq \V'$ is a subset of non-sink vertices in $S_{(m-1)*,n}$. Then since $p'$ is a $S_{(m-1)*,n}$-parking function, there exists $w \in S$ such that $p'(w) \leq \dgr{V' \setminus S}{w}$, yielding $p^{v+}(w) = p(w) + 1 = p'(w) + 1 \leq \dgr{V' \setminus S}{w} + 1 = \dgr{V' \setminus S}{w} + \mult{w}{v_1^c} = \dgr{V \setminus S}{w}$, as desired. This completes the proof that $\Del{v_1^c}$ is the desired bijection.

The enumeration result then follows from~\cite[Corollary 3.4]{Duk} and the bijection between (increasing) $S_{(m-1)*,n}$-parking functions and (decreasing) recurrent configurations of the ASM on $S_{(m-1)*,n}$.
\end{proof}

%%%%%%%%%%%%%%%%%%%%%%%% SECTION 4 %%%%%%%%%%%%%%%%%%%%%

\section{Conclusion and future work}\label{sec:conc}

The main goal of our paper was to introduce a notion of \emph{primeness} of $G$-parking functions for general graphs $G$. We define prime $G$-parking functions as parking functions that cannot be decomposed in a certain sense (Definition~\ref{def:prime_GPF}). Our notion addresses an open problem from~\cite{ABHKMMY} by generalising the concepts of primeness for classical parking functions (Proposition~\ref{pro:classical_PPF_complete_graph}) and for $(p,q)$-parking functions (Theorem~\ref{thm:tripartite_pqPF}).

We then studied this new concept of primeness under the well-known duality between $G$-parking functions and recurrent configurations for the Abelian sandpile model (ASM) on $G$ (see Theorem~\ref{thm:bij_rec_GPF} in this paper). We showed that prime $G$-parking functions correspond to so-caled \emph{strongly recurrent} configurations for the ASM on $G$ (Theorem~\ref{thm:bij_sr_GPPF}). These are recurrent configurations which remain recurrent after removing grains of sand from certain vertices (Definition~\ref{def:SR}).

Finally, we studied the sets of prime $G$-parking functions (equivalently, of strongly recurrent configurations for the ASM on $G$), in cases where $G$ belongs to graph families with high degrees of symmetry. The cases considered include wheel graphs (Proposition~\ref{pro:wheel_sr}), complete bipartite graphs (Theorem~\ref{thm:bipartite_bij}) and complete split graphs (Theorem~\ref{thm:split_bij}). Our results include both characterisations and enumerations of prime parking functions on these graph families.

One potentially interesting open problem is that of uniqueness of prime decompositions. In Remark~\ref{rem:decomp} we noted that prime decompositions of $G$-parking functions are not necessarily unique. However, they are known to be unique (up to symmetry) for classical parking functions (equivalently, when $G = K_n^0$) and for $(p,q)$-parking functions (equivalently, when $G = K_{p,q}^0$, see~\cite[Section~4.1]{ABHKMMY}). 

\begin{problem}\label{pb:unique_prime_decomp}
Under what conditions on the graph $G$ do all $G$-parking functions have a unique prime decomposition (up to the symmetries of $G$)?
\end{problem}

In terms of future research, Armon \emph{et al.}~\cite{ABHKMMY} noted of generalised prime parking functions that ``Much remains to be investigated in this area''. We echo their thoughts, in particular in terms of defining and studying concepts of primeness for some of the variations of parking functions which have been widely studied in the literature (see the survey in Section~\ref{subsec:PF_intro} of this paper). It would also be interesting to further the combinatorial study of prime $G$-parking functions on graph families that was instigated in this paper.

\section*{Acknowledgments}

The authors have no competing interests to declare that are relevant to the content of this article. The research leading to these results is partially supported by the Research Development Fund of Xi'an Jiaotong-Liverpool University, grant number RDF-22-01-089, and by the Postgraduate Research Scholarship of Xi'an Jiaotong-Liverpool University, grant number PGRS2012026.

\bibliographystyle{plain}
\bibliography{SR_primeGPF_bibliography}

\begin{thebibliography}{10}

\bibitem{AEGHH}
Tom{\'a}s Aguilar-Fraga, Jennifer Elder, Rebecca~E Garcia, Kimberly~P Hadaway,
  Pamela~E Harris, Kimberly~J Harry, Imhotep~B Hogan, Jakeyl Johnson, Jan
  Kretschmann, Kobe Lawson-Chavanu, et~al.
\newblock Interval and {$\ell$}-interval rational parking functions.
\newblock {\em Discrete Math. Theor. Comput. Sci.}, 26(Combinatorics), 2024.

\bibitem{AD}
Amal Alofi and Mark Dukes.
\newblock A note on the lacking polynomial of the complete bipartite graph.
\newblock {\em Disc. Math.}, 348(2):8, 2025.
\newblock Id/No 114323.

\bibitem{ABHKMMY}
Sam Armon, Joanne Beckford, Dillon Hanson, Naomi Krawzik, Olya Mandelshtam,
  Lucy Martinez, and Catherine Yan.
\newblock Primeness of generalized parking functions.
\newblock {\em ArXiv preprint}, 2024.

\bibitem{AvBer2016}
Jean-Christophe Aval and Fran{\c{c}}ois Bergeron.
\newblock Interlaced rectangular parking functions.
\newblock {\em S{\'e}min. Lothar. Comb.}, 81:article b81h, 15, 2020.

\bibitem{AADB}
Jean-Christophe Aval, Michele D'Adderio, Mark Dukes, and Yvan Le~Borgne.
\newblock Two operators on sandpile configurations, the sandpile model on the
  complete bipartite graph, and a cyclic lemma.
\newblock {\em Adv. Appl. Math.}, 73:59--98, 2016.

\bibitem{BTW1}
Per {Bak}, Chao {Tang}, and Kurt {Wiesenfeld}.
\newblock {Self-organized criticality: An explanation of the {{\(1/f\)}}
  noise}.
\newblock {\em {Phys. Rev. Lett.}}, 59:381--384, 1987.

\bibitem{BTW2}
Per {Bak}, Chao {Tang}, and Kurt {Wiesenfeld}.
\newblock {Self-organized criticality}.
\newblock {\em {Phys. Rev. A (3)}}, 38(1):364--374, 1988.

\bibitem{Baker}
Matthew {Baker} and Serguei {Norine}.
\newblock {Riemann-Roch and Abel-Jacobi theory on a finite graph}.
\newblock {\em {Adv. Math.}}, 215(2):766--788, 2007.

\bibitem{Ber}
Olivier {Bernardi}.
\newblock {Tutte polynomial, subgraphs, orientations and sandpile model: new
  connections via embeddings}.
\newblock {\em {Electron. J. Comb.}}, 15(1):research paper r109, 53, 2008.

\bibitem{Biggs1}
Norman {Biggs}.
\newblock {Chip-firing and the critical group of a graph}.
\newblock {\em {J. Algebr. Comb.}}, 9(1):25--45, 1999.

\bibitem{Biggs}
Norman Biggs.
\newblock The tutte polynomial as a growth function.
\newblock {\em J. Algebr. Comb.}, 10(2):115--133, 1999.

\bibitem{CamDefPF}
{Peter Jephson} Cameron, Daniel Johannsen, Thomas Prellberg, and Pascal
  Schweitzer.
\newblock Counting defective parking functions.
\newblock {\em Electron. J. Comb.}, 15(1):research paper R92, 378--388, 2008.

\bibitem{ChevPyl}
Denis Chebikin and Pavlo Pylyavskyy.
\newblock A family of bijections between {{\(G\)}}-parking functions and
  spanning trees.
\newblock {\em J. Comb. Theory, Ser. A}, 110(1):31--41, 2005.

\bibitem{HarrisSeq2}
Douglas~M. Chen, Pamela~E. Harris, J.~Carlos~Mart\'{i}nez Mori, Eric~J.
  Pab\'{o}n-Cancel, and Gabriel Sargent.
\newblock Permutation invariant parking assortments.
\newblock {\em Enumer. Comb. Appl.}, 4(1):Paper No. S2R4, 2024.

\bibitem{Chen}
Haiyan Chen and Bojan Mohar.
\newblock The sandpile group of polygon rings and twisted polygon rings.
\newblock {\em Graphs Comb.}, 38(4):24, 2022.
\newblock Id/No 113.

\bibitem{HarrisNaples1}
Alex Christensen, Pamela~E. Harris, Zakiya Jones, Marissa Loving, Andr{\'e}s
  Ramos~Rodr{\'{\i}}guez, Joseph Rennie, and Gordon Rojas~Kirby.
\newblock A generalization of parking functions allowing backward movement.
\newblock {\em Electron. J. Comb.}, 27(1):research paper p1.33, 18, 2020.

\bibitem{CDMY}
Emma Colaric, Ryan DeMuse, Jeremy~L. Martin, and Mei Yin.
\newblock Interval parking functions.
\newblock {\em Adv. Appl. Math.}, 123:18, 2021.
\newblock Id/No 102129.

\bibitem{HarrisNaples2}
Laura Colmenarejo, Pamela~E. Harris, Zakiya Jones, Christo Keller,
  Andr{\'e}s~Ramos Rodr{\'i}guez, Eunice Sukarto, and Andr{\'e}s~R.
  Vindas-Mel{\'e}ndez.
\newblock Counting {{\(k\)}}-{N}aples parking functions through permutations
  and the {{\(k\)}}-{N}aples area statistic.
\newblock {\em Enumer. Comb. Appl.}, 1(2):Paper No. S2R11, 2021.

\bibitem{CLB}
Robert {Cori} and Yvan {Le Borgne}.
\newblock {The sand-pile model and Tutte polynomials}.
\newblock {\em {Adv. Appl. Math.}}, 30(1-2):44--52, 2003.

\bibitem{CorPou}
Robert {Cori} and Dominique {Poulalhon}.
\newblock {Enumeration of \((p,q)\)-parking functions}.
\newblock {\em {Discrete Math.}}, 256(3):609--623, 2002.

\bibitem{CR}
Robert {Cori} and Dominique {Rossin}.
\newblock {On the sandpile group of dual graphs}.
\newblock {\em {Eur. J. Comb.}}, 21(4):447--459, 2000.

\bibitem{ALB}
Michele D'Adderio and Yvan Le~Borgne.
\newblock The sandpile model on {$ K_{m, n} $} and the rank of its
  configurations.
\newblock {\em S{\'e}m. Loth. Comb.}, 77:Art--B77h, 2018.

\bibitem{DFF}
Arnaud {Dartois}, Francesca {Fiorenzi}, and Paolo {Francini}.
\newblock {Sandpile group on the graph \(\mathcal D_n\) of the dihedral group}.
\newblock {\em {Eur. J. Comb.}}, 24(7):815--824, 2003.

\bibitem{DDLB}
Henri Derycke, Mark Dukes, and Yvan Le~Borgne.
\newblock The sandpile model on the complete split graph: $ q, t
  $-{S}chr{\"o}der polynomials, sawtooth polyominoes, and a cyclic lemma.
\newblock {\em ArXiv preprint}, 2024.

\bibitem{Dhar1}
Deepak {Dhar}.
\newblock {Self-organized critical state of sandpile automaton models}.
\newblock {\em {Phys. Rev. Lett.}}, 64(14):1613--1616, 1990.

\bibitem{Dhar}
Deepak Dhar.
\newblock Theoretical studies of self-organized criticality.
\newblock {\em Phys. A Stat. Mech. Appl.}, 369(1):29--70, 2006.
\newblock Fundamental Problems in Statistical Physics.

\bibitem{DG2024}
Rui Duarte and Ant{\'o}nio Guedes~de Oliveira.
\newblock The number of prime parking functions.
\newblock {\em Math. Intell.}, 46(3):222--224, 2024.

\bibitem{Duk}
Mark {Dukes}.
\newblock {The sandpile model on the complete split graph, Motzkin words, and
  tiered parking functions}.
\newblock {\em {J. Comb. Theory, Ser. A}}, 180:15, 2021.
\newblock Id/No 105418.

\bibitem{DLB}
Mark {Dukes} and Yvan {Le Borgne}.
\newblock {Parallelogram polyominoes, the sandpile model on a complete
  bipartite graph, and a \(q,t\)-Narayana polynomial}.
\newblock {\em {J. Comb. Theory, Ser. A}}, 120(4):816--842, 2013.

\bibitem{DSSS2}
Mark {Dukes}, Thomas {Selig}, Jason~P. {Smith}, and Einar {Steingr\'{\i}msson}.
\newblock {Permutation graphs and the abelian sandpile model, tiered trees and
  non-ambiguous binary trees}.
\newblock {\em {Electron. J. Comb.}}, 26(3):research paper P3.29, 25pp., 2019.

\bibitem{DSSS1}
Mark {Dukes}, Thomas {Selig}, Jason~P. {Smith}, and Einar {Steingr\'{\i}msson}.
\newblock {The abelian sandpile model on Ferrers graphs -- a classification of
  recurrent configurations}.
\newblock {\em {Eur. J. Comb.}}, 81:221--241, 2019.

\bibitem{EhrHapp}
Richard Ehrenborg and Alex Happ.
\newblock Parking cars of different sizes.
\newblock {\em Am. Math. Mon.}, 123(10):1045--1048, 2016.

\bibitem{HarrisPullback}
Jennifer Elder, Pamela Harris, Lybitina Koene, Ilana Lavene, Lucy Martinez, and
  Molly Oldham.
\newblock Pullback parking functions.
\newblock {\em Am. J. Comb.}, 4:1--22, 2025.

\bibitem{HarrisSeq1}
Spencer~J. Franks, Pamela~E. Harris, Kimberly~J. Harry, Jan Kretschmann, and
  Megan Vance.
\newblock Counting parking sequences and parking assortments through
  permutations.
\newblock {\em Enumer. Comb. Appl.}, 4(1):Paper No. S2R2, 2024.

\bibitem{GMV2016}
Eugene Gorsky, Mikhail Mazin, and Monica Vazirani.
\newblock Affine permutations and rational slope parking functions.
\newblock {\em T. Am. Math. Soc.}, 368(12):8403--8445, 2016.

\bibitem{GZ}
Curtis Greene and Thomas Zaslavsky.
\newblock On the interpretation of whitney numbers through arrangements of
  hyperplanes, zonotopes, non-radon partitions, and orientations of graphs.
\newblock {\em T. Am. Math. Soc.}, 280(1):97--126, 1983.

\bibitem{HarrisMVP}
Pamela~E. Harris, Brian~M. Kamau, J.~Carlos~Mart\'{i}nez Mori, and Roger Tian.
\newblock On the outcome map of {MVP} parking functions: Permutations avoiding
  321 and 3412, and {M}otzkin paths.
\newblock {\em Enumer. Comb. Appl.}, 3(2):Paper No. S2R11, 16, 2023.

\bibitem{SeligFriend}
Yujia Kang, Thomas Selig, Guanyi Yang, Yanting Zhang, and Haoyue Zhu.
\newblock On friendship and cyclic parking functions.
\newblock {\em Enumer. Comb. Appl.}, 4(3):Article \#S2R21, 2024.

\bibitem{KY_PF}
Richard Kenyon and Mei Yin.
\newblock Parking functions: from combinatorics to probability.
\newblock {\em Methodol. Comput. Appl. Probab.}, 25(1):30, 2023.
\newblock Id/No 32.

\bibitem{KonWeiss}
Alan~G. Konheim and Benjamin Weiss.
\newblock An occupancy discipline and applications.
\newblock {\em SIAM J. Appl. Math.}, 14(6):1266--1274, 1966.

\bibitem{Nels2025}
Garrett Nelson.
\newblock Rational parking functions and $(m, n)$-invariant sets.
\newblock {\em ArXiv preprint}, 2025.

\bibitem{PostPF}
Alexander Postnikov and Boris Shapiro.
\newblock Trees, parking functions, syzygies, and deformations of monomial
  ideals.
\newblock {\em T. Am. Math. Soc.}, 356:3109--3142, 2004.

\bibitem{Redig}
Frank Redig.
\newblock Mathematical aspects of the abelian sandpile model.
\newblock In {\em Mathematical statistical physics. \'Ecole d'\'Et\'e de
  Physique des Houches session LXXXIII, ESF Summer school \'Ecole th\'ematique
  du CNRS, Les Houches, France, July 4--29, 2005.}, pages 657--729. Amsterdam:
  Elsevier, 2006.

\bibitem{Schulz}
Matthias Schulz.
\newblock Minimal recurrent configurations of chip firing games and directed
  acyclic graphs.
\newblock In {\em Automata 2010. Selected papers based on the presentations at
  the 16th international workshop on cellular automata (CA) and discrete
  complex systems (DCS), Nancy, France, June 14--16, 2010.}, pages 111--124.
  Nancy: The Association. Discrete Mathematics \& Theoretical Computer Science
  (DMTCS), 2010.

\bibitem{SelWheel}
Thomas Selig.
\newblock Combinatorial aspects of sandpile models on wheel and fan graphs.
\newblock {\em Eur. J. Comb.}, 110:23, 2023.
\newblock Id/No 103663.

\bibitem{SelSSM}
Thomas {Selig}.
\newblock {The stochastic sandpile model on complete graphs}.
\newblock {\em {Electron. J. Comb.}}, 31(3):research paper P3.26, 29pp., 2024.

\bibitem{SSS}
Thomas {Selig}, Jason~P. {Smith}, and Einar {Steingr\'{\i}msson}.
\newblock {EW-tableaux, Le-tableaux, tree-like tableaux and the abelian
  sandpile model}.
\newblock {\em {Electron. J. Comb.}}, 25(3):research paper P3.14, 32pp., 2018.

\bibitem{SeligMVP}
Thomas Selig and Haoyue Zhu.
\newblock New combinatorial perspectives on {MVP} parking functions and their
  outcome map.
\newblock {\em ArXiv preprint}, 2023.

\bibitem{SeligLuk}
Thomas Selig and Haoyue Zhu.
\newblock Parking functions and Łukasiewicz paths.
\newblock {\em Discrete Math. Lett.}, 14:77--84, 2024.

\bibitem{SeligCompBipart}
Thomas Selig and Haoyue Zhu.
\newblock Abelian and stochastic sandpile models on complete bipartite graphs.
\newblock In Shin-ichi Nakano and Mingyu Xiao, editors, {\em WALCOM: Algorithms
  and Computation}, pages 326--345, Singapore, 2025. Springer Nature Singapore.

\bibitem{YanUpq}
Lauren Snider and Catherine Yan.
\newblock $\mathbf{U}$-parking functions and \((p,q)\)-parking functions.
\newblock {\em Adv. Appl. Math.}, 134:102309, 2022.

\bibitem{Yan2023}
Lauren Snider and Catherine Yan.
\newblock ($\mathfrak{S}_p \times \mathfrak{S}_q$)-invariant graphical parking
  functions.
\newblock {\em ArXiv preprint}, 2023.

\bibitem{Spiro}
Sam Spiro.
\newblock Subset parking functions.
\newblock {\em J. Integer Seq.}, 22(7):article 19.7.3, 16, 2019.

\bibitem{StanHyp}
Richard~P. Stanley.
\newblock Hyperplane arrangements, interval orders, and trees.
\newblock {\em Proc. Natl. Acad. Sci. USA}, 93(6):2620--2625, 1996.

\bibitem{StanEC}
Richard~P. Stanley.
\newblock {\em Enumerative combinatorics. {Volume} 2}, volume~62 of {\em Camb.
  Stud. Adv. Math.}
\newblock Cambridge: Cambridge University Press, 1999.

\bibitem{PitStan}
Richard~P. Stanley and Jim Pitman.
\newblock A polytope related to empirical distributions, plane trees, parking
  functions, and the associahedron.
\newblock {\em Discrete Comput. Geom.}, 27(4):603--634, 2002.

\bibitem{Yan}
Catherine~H. Yan.
\newblock {\em Parking Functions}, chapter~13.
\newblock CRC Press, 2015.

\bibitem{Yin2023}
Mei Yin.
\newblock Parking functions, multi-shuffle, and asymptotic phenomena.
\newblock {\em La Matematica}, pages 1--25, 2023.

\bibitem{Zhou}
Yufang {Zhou} and Haiyan {Chen}.
\newblock {The sandpile group of a family of nearly complete graphs}.
\newblock {\em {Bull. Malays. Math. Sci. Soc. (2)}}, 44(2):625--637, 2021.

\end{thebibliography}

\end{document}